\newtheorem{theorem}{Theorem}[section]
\newtheorem{lemma}[theorem]{Lemma}
\newtheorem{prop}[theorem]{Proposition}
\newtheorem{proposition}[theorem]{Proposition}
\newtheorem{corollary}[theorem]{Corollary}
\newtheorem{definition}[theorem]{Definition}
\newtheorem{example}[theorem]{Example}
\newtheorem{rem}[theorem]{Remark}
\newtheorem{remark}[theorem]{Remark}
\numberwithin{equation}{section}
\newcommand{\rk}{{\rm rank}}
\newcommand{\ra}{\rightarrow}
\newcommand{\Ext}{\mathrm{Ext}}
\newcommand{\PP}{ \mathbb{P}}
\newcommand{\C }{ \mathbb{C}}
\newcommand{\CC }{ \mathbb{C}}
\newcommand{\Z}{\mathbb{Z}}
\newcommand{\cL}{\mathcal{L}}
\newcommand{\cM}{\mathcal{M}}
\newcommand{\Aut}{\mbox{Aut}}
\newcommand{\ZZ}{\mathbb{Z}}
\newcommand{\oo}{\mathcal{O}}
\newcommand{\ii}{\mathcal{I}}
\newcommand{\EE}{\mathscr{E}} 
\newtheoremstyle{dico}
{\baselineskip}   
{\topsep}   
{}  
{0pt}       
{} 
{.}         
{5pt plus 1pt minus 1pt} 
{}          
\theoremstyle{dico}
\newtheorem{say}[theorem]{}
\def\blfootnote{\xdef\@thefnmark{}\@footnotetext}
\title{Triple covers of K3 surfaces}
\author{Alice Garbagnati and Matteo Penegini}
\date{}
\begin{document}

	\subjclass[2010]{14E20, 14J28, (14J27, 14J29)}
	\keywords{K3 surfaces, triple covers, Tschirnhausen vector bundle} 
	
	\maketitle
	\begin{abstract}
We study triple covers of K3 surfaces, following \cite{M85}. We relate the geometry of the covering surfaces with the properties of both the branch locus and the Tschirnhausen vector bundle. In particular, we classify Galois triple covers computing numerical invariants of the covering surface and of its minimal model. We provide examples of non Galois triple covers, both in the case in which the Tschirnhausen bundle splits into the sum of two line bundles and in the case in which it is an indecomposable rank 2 vector bundle. We provide a criterion to construct rank 2 vector bundles on a K3 surface $S$ which determine a non-Galois triple cover of $S$. 

The examples presented are in any admissible Kodaira dimension and in particular we provide the constructions of irregular covers of K3 surfaces and of surfaces with geometrical genus equal to 2 whose transcendental Hodge structure splits in the sum of two Hodge structures of K3 type.    
	\end{abstract}
	\section{Introduction}
The Galois covers of K3 surfaces are a quite classical and interesting argument of research: for example the K3 surfaces which are Galois covers of other K3 surfaces are classified in \cite{X} and the Abelian surfaces which are Galois covers of K3 surfaces are classified in \cite{F}. The study of surfaces with higher Kodaira dimension which are covers of K3 surfaces is less systematic and sporadic examples appear in order to construct specific surfaces see e.g. \cite{CD89, L19,L20, PZ19, RRS19, S06}.
Nevertheless, a systematic approach to the study of the double covers of K3 surfaces is presented in \cite{Gdouble}, where smooth double covers are classified and their birational invariants are given. In the same paper certain covers surfaces with $p_g=2$ are described with more details, since the geometry of these surfaces is quite interesting (see e.g. \cite{L19,L20}).

The aim of this paper is to analyse triple covers of K3 surfaces. One of the main differences between covers of degree 2 and the ones of degree 3 is that the latter are not necessarily Galois.\\

In Section \ref{sec: triple cover in algebraic geometry} we present the general theory of the triple covers of surfaces following \cite{M85, Tan02}. We consider a smooth surface $S$ and a triple cover $f\colon X\ra S$ which is naturally associated to a rank 2 vector bundle  $\EE$, with the property $$f_*\mathcal{O}_X=\mathcal{O}_S\oplus \EE.$$ The vector bundle $\EE$ is called the Tschirnhausen vector bundle of the cover. There are three possibilities:
\begin{itemize}
\item the cover is Galois (in particular $\Z/3\Z$-cyclic); this happens if $\EE$ splits in the direct sum of two line bundles $\mathcal{L}$ and $\mathcal{M}$ which are determined by the non-trivial characters  of $\Z/3\Z$, see Paragraph \ref{say: Galois enigespance}. In this case the triple cover is totally ramified and the singularities of $X$ are due only to singularities of the branch locus; 
\item $\EE$ splits into the direct sum of two line bundles $\mathcal{L}$ and $\mathcal{M}$ but the cover is not Galois. In this case there are components in the branch locus which are of simple, but not total, ramification and we refer to this situation as a \emph{split non-Galois} triple cover; 
\item $\EE$ is indecomposable; also in this case the cover is not Galois and we refer to this case as the \emph{non-split} triple cover. 
\end{itemize}

We are interested in calculating the numerical invariants of the covering surface $X$: $p_g(X)$, $q(X)$, $c_1(X)^2$, $c_2(X)$ and $\kappa(X)$ (which are respectively the geometric genus, the irregularity, the square of the first Chern class, the second Chern class and the Kodaira dimension of the surface $X$). We relate them with the properties of the surface $S$ and of the bundle $\EE$. Some of these numbers are not birational invariants hence, if $X$ is singular, we need to find the minimal model of $X$ to determine the numerical invariants for this model. This aspect is highly non trivial, even if one restricts itself to the Galois triple covers, indeed it requires not just a carefully analysis of the singularities of $X$, but also of the configurations of the $(-1)$-curves appearing in its minimal resolution. 

We restrict to the situation where $S$ is a K3 surface. We provide a criterion to determine the Kodaira dimension of the cover surface $X$ and due to the example constructed in the paper we prove
\begin{theorem} There exist pairs $(S,f)$ such that $f:X\ra S$ is a triple cover either Galois, or split non-Galois or non-split and with either $\kappa(X)=1$ or $\kappa(X)=2$. 
There exist pairs $(S,f)$ such that $f:X\ra S$ is a triple cover either Galois or split non-Galois with $\kappa(X)=0$ and $X$ is necessarily (a possibly singular model of) either a K3 surface or an Abelian surface.
\end{theorem}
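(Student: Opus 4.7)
The statement combines a family of existence claims with a classification restriction, so my plan is to split the proof in two. On the existence side, I need pairs $(S,f)$ producing each of the six combinations (Galois, split non-Galois, non-split) $\times$ ($\kappa(X)=1,2$), plus Galois and split non-Galois examples realizing $\kappa(X)=0$. On the classification side, I need to argue that whenever $\kappa(X)=0$ and the cover is Galois or split non-Galois, the minimal model of $X$ is forced to be a K3 or an Abelian surface. Throughout I would rely on the Miranda correspondence of Section~\ref{sec: triple cover in algebraic geometry}, which converts the search for covers into a search for rank $2$ bundles $\EE$ on $S$ together with branch data, and on the specialization of the invariant formulas to the case $K_S\simeq\oo_S$, $\chi(\oo_S)=2$.

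For the constructions, I would exploit the freedom in choosing $\det\EE$. Since $K_S$ is trivial, the canonical class of $X$ is controlled by $\det\EE$, so its numerical type dictates $\kappa(X)$: a big class gives $\kappa=2$, a nef class with vanishing self-intersection gives $\kappa=1$, and a numerically trivial class gives $\kappa=0$. For Galois covers I would take $\EE=\mathcal{L}\oplus\mathcal{M}$ with $\mathcal{L}$, $\mathcal{M}$ satisfying the cyclic relations of Paragraph~\ref{say: Galois enigespance}; for split non-Galois covers the same ansatz works provided the branch data is chosen so that a component carries simple rather than total ramification; for non-split covers I would apply the criterion producing indecomposable $\EE$ as a non-trivial extension, announced in the introduction. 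The concrete examples would be built on well-studied K3 surfaces such as elliptic fibrations, smooth quartics in $\PP^3$ and double covers of $\PP^2$ branched on a sextic, where one has explicit line bundles and effective divisors available. The numerical invariants $p_g(X)$, $q(X)$, $c_1(X)^2$, $c_2(X)$ then follow from the formulas of Section~\ref{sec: triple cover in algebraic geometry}, and verifying the target Kodaira dimension reduces to a routine computation once $\det\EE$ is pinned down.

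For the restriction in the $\kappa(X)=0$ case, I would invoke the Enriques--Kodaira classification: the minimal model $X_{\min}$ is then a K3, an Enriques, an Abelian or a bielliptic surface, and the last two possibilities must be excluded when the cover is Galois or split non-Galois. The Euler characteristic $\chi(\oo_X)=6+\chi(\EE)$ is a birational invariant, and enforcing $\EE=\mathcal{L}\oplus\mathcal{M}$ with $\det\EE$ numerically trivial constrains $\chi(\EE)$ to values incompatible with $\chi(\oo_{X_{\min}})=1$, ruling out Enriques. The bielliptic case, which has $\chi=0$ and $q=1$, is excluded by a parallel irregularity argument, since $q(X)$ can be computed from $h^1(\mathcal{L})$ and $h^1(\mathcal{M})$ on the K3 base and the possible values do not produce an irregularity of $1$ compatible with a bielliptic quotient. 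The main obstacle in this second part is the bookkeeping with the singularities of $X$ and the configurations of $(-1)$-curves on its minimal resolution: $\kappa$, $p_g$ and $q$ are birational invariants but $c_1^2$ and $c_2$ are not, so one must carefully track the passage from $X$ through its resolution to $X_{\min}$ before concluding that only K3 and Abelian surfaces arise. Once this analysis is carried out, the existence of the $\kappa(X)=0$ examples is ensured by exhibiting at least one triple cover of each allowed type whose minimal model is actually a K3 or Abelian surface, which the construction in the previous paragraph supplies.
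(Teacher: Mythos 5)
Your overall plan (construct one example for each admissible pair, then use Enriques--Kodaira to pin down the $\kappa=0$ case) matches the shape of the paper's argument, but the mechanism you propose for producing and recognising the $\kappa(X)=0$ covers is wrong, and this is exactly the delicate half of the statement. On a K3 surface the branch divisor is effective and its class is $-2c_1(\EE)$, so ``$\det\EE$ numerically trivial'' forces the branch to be empty; since a K3 is simply connected there is no connected \'etale triple cover, and your recipe produces nothing. The actual $\kappa=0$ covers in the paper have highly non-trivial $\det\EE$: their branch loci are $3$-divisible sets of $6$ or $9$ disjoint $A_2$-configurations of $(-2)$-curves (Theorem \ref{theorem: three cases giusto}, Corollary \ref{cor: giusto}, using Barth's result), and the correct Kodaira-dimension criterion is the one of Proposition \ref{prop_kodairaDim1}: $\kappa(X)=0$ iff the moving part of the ramification divisor is empty and its fixed part is supported on rational curves, not iff $K_X$ (or $\det\EE$) is numerically trivial. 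The same false premise undermines your exclusion of Enriques and bielliptic surfaces: you cannot ``enforce $\det\EE$ numerically trivial'', your formula $\chi(\oo_X)=6+\chi(\EE)$ should read $\chi(\oo_X)=\chi(\oo_S)+\chi(\EE)=2+\chi(\EE)$, and the claim that the possible values of $q(X)=h^1(\mathcal{L})+h^1(\mathcal{M})$ never give $1$ is unsubstantiated unless you first classify the admissible branch configurations. The paper avoids all of this with one line: by duality for the finite flat map, $p_g(X)\geq p_g(S)=1$ (equation \eqref{eq_h0k}), and $p_g$ is a birational invariant, so a $\kappa=0$ minimal model with $p_g\geq 1$ must be K3 or Abelian; the split non-Galois $\kappa=0$ example then comes from an $\mathfrak{S}_3$-symplectic action on a K3, the Galois ones from the $6$ and $9$ $A_2$-configuration covers.

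The existence half for the remaining cases is also only a plan, not a proof. For the non-split covers the entire difficulty is to produce an indecomposable $\EE$ with $H^0(S^3\EE^{\vee}\otimes\bigwedge^2\EE)\neq 0$; invoking ``the criterion announced in the introduction'' is circular, since that criterion is itself Theorem \ref{theor: existence triple tschi} of the paper (proved via Cayley--Bacharach plus an Eagon--Northcott computation) and must then be fed concrete data such as $(\mathcal{L},\mathcal{M},Z)=(\oo_S(3C),\oo_S(C),\{z_1,z_2\})$ on an elliptic K3 to get $\kappa=1$, or the configuration $C+\sum R_i$ to get $\kappa=2$. Likewise the split non-Galois examples require actual constructions (in the paper: base change along a non-Galois triple cover of $\PP^1$ for $\kappa=1$, and an even set of $8$ or $16$ nodal curves together with a curve in $|3H|$ for $\kappa=2$, both analysed through the $\mathfrak{S}_3$ Galois closure), and the $\kappa=2$, $\kappa=1$ Galois examples require verifying triple-cover data such as $3$-divisibility of the chosen branch classes in $NS(S)$. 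As written, your argument establishes neither the existence claims nor the restriction in the $\kappa=0$ case.
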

Since $X$ is a cover of $S$, it is not possible that $\kappa(X)=-\infty$. We do not know if there exist non split triple cover $f:X\ra S$ with $\kappa(X)=0$.\\

We consider the Galois triple covers of K3 surfaces, obtaining both general results (in Section \ref{sec: the Galois case}) and constructing explicitly families of examples (in Section \ref{sec: examples Galois cover of K3 surfaces}). First, we discuss the singularities of $X$, all coming from the singularities in the branch locus of the cover $f:X\ra S$. There are two different strategies to resolve the singularities of the triple cover: one can blow up $S$ in the singularities of the branch locus until one obtains a birational model of $S$ such that the strict transform of the branch locus is smooth, then one construct the smooth triple cover of this surface. The surface obtained is birational to $X$ and it is called the canonical resolution of $X$ (cfr. \cite{Tan02}). But one can also consider the possibly singular surface $X$ and then resolve its singularities obtaining a resolution which is called minimal resolution of $X$. Note that neither the canonical nor the minimal resolution are necessarily minimal surface. In Sections \ref{say: singu type 1} and \ref{say: sing type 2} we construct both these resolutions if the singularities of the branch locus are ordinary and we observe that they are {\it negligible} (see Definition \ref{def.neg.sing} and Proposition \ref{prop: negligible}) which allows us to compute the numerical invariants not only of $X$, but also of its canonical and of its minimal resolution in Proposition \ref{prop: rito numbers}. Some other singularities in the branch locus are considered in Theorem \ref{say: other singularities in the branch locus} and they are proved to be negligible too. 

Under mild conditions on the smoothness of some components of the branch locus we are often able to identify all the $(-1)$-curves that appear in the resolutions considered and therefore to compute the numerical invariant of the minimal model of $X$, see Proposition \ref{cor: example general type, irreducible D1}, Proposition \ref{prop: case (2) theorem- possibilities}, Proposition \ref{prop: examples case 3 of proposition}.

The main result of this part is a systematic classification of the Galois triple covers of $S$, which can be summarized in the following theorem

\begin{theorem} 
	Let $f\colon X \ra S$ be a normal Galois triple cover of a K3 surface, whose branch locus has $n \geq 1$ connected components, 
	$D_1,\ldots, D_n$ and let $\Lambda_{D_i}$ be the lattice generated by the irreducible components of $D_i$:
	
  $\bullet$ $k(X)=0$ if and only if all the lattices $\Lambda_{D_i}$ are negative definite; in this case $\Lambda_{D_i}\simeq A_2(-1)$, $n=6$ or $n=9$ and $X$ has a trivial canonical bundle.

$\bullet$ $k(X)=2$ if and only if there exists a lattice $\Lambda_{D_i}$  whose signature is $sgn(\Lambda_{D_i})=(1,\rk(\Lambda_{D_i})-1)$; in this case all the others $\Lambda_{D_j}$ are isometric to $A_2(-1)$.

$\bullet$ $k(X)=1$ if and only if there are no lattices $\Lambda_{D_i}$ which are indefinite and there exists at least a lattice $\Lambda_{D_i}$ which is degenerate; in this case the elliptic fibration on $X$ is induced by one on $S$.

In particular if there is a component $D_1$ in the branch locus such that $D_1$ is an irreducible curve, then it holds:

    $\bullet$ if $D_1^2=0$ then $k(X)=1$;

	$\bullet$ if $D_1^2>0$ then $k(X)=2$, $n\leq 10$, $D_1^2=6d$, for an integer $d>0$ and exists an integer $k$ such that $d=n-1+3k$ and	$k\geq -2$. 
	If $D_1$ is smooth and $X^{\circ}$ is the minimal model of $X$ then \begin{equation*}\label{eq: inv Xm special case}\chi(X^{\circ})=5+n+5k,\ K_{X^{\circ}}^2=8n-8+24k, \ e(X^{\circ})=67+5n+36k.\end{equation*}
\end{theorem}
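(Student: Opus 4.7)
My plan is to reduce $\kappa(X)$ to the Iitaka dimension of the branch divisor $B = \sum_{i=1}^n D_i$ on $S$ and then to analyse this via Hodge theory on $\NS(S)$. First I would invoke the canonical bundle formula for Galois triple covers from Section \ref{sec: triple cover in algebraic geometry}: $K_X \equiv f^* B$, where the identification $L+M \equiv B$ in $\NS(S)$ comes from the Galois relations of Paragraph \ref{say: Galois enigespance}. By Proposition \ref{prop: negligible} and Theorem \ref{say: other singularities in the branch locus}, all singularities of $X$ are negligible, so the plurigenera of the (canonical or minimal) resolution agree with those computed from $f^*B$; together with the splitting of $f_*\mathcal{O}_X$ and the finiteness of $f$, this yields $\kappa(X) = \kappa(B, S)$.

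Next I would apply the Hodge index theorem to the sublattice $\bigoplus_i \Lambda_{D_i} \subseteq \NS(S)$. Since $\NS(S)$ has signature $(1, \rho - 1)$, at most one positive direction is available. As distinct $D_i$ are disjoint, if some $\Lambda_{D_i}$ contains a positive vector it must be the unique one, with signature $(1, \rk-1)$, while the remaining $\Lambda_{D_j}$ lie in $(D_i)^\perp$ and are therefore negative definite; this yields $B^2 > 0$, hence $\kappa(B) = 2$. If instead every $\Lambda_{D_i}$ is negative semidefinite but at least one is degenerate, its radical is spanned by an isotropic vector which, by an affine ADE analysis of the connected $(-2)$-configuration, is effective and equal to the fiber class of an elliptic fibration on $S$; this pencil pulls back to an elliptic fibration on $X$, yielding $\kappa = 1$. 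Otherwise all $\Lambda_{D_i}$ are negative definite, $B$ is rigid, and $\kappa = 0$.

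For the case $\kappa(X) = 0$, each $D_i$ is a connected negative-definite configuration of $(-2)$-curves. The Galois-cover divisibility constraints $3L \sim B_L$ and $3M \sim B_M$ in $\NS(S)$, together with the fact that $X^\circ$ is a minimal Calabi-Yau surface (from $\kappa = 0$ and Enriques-Kodaira), pin down each connected component of the branch to be an $A_2(-1)$ configuration, which is the unique ADE type admitting the required totally ramified Galois structure while producing a Calabi-Yau minimal model. An Euler characteristic computation using Proposition \ref{prop: rito numbers} and Noether's formula then discriminates between the two possibilities: $n = 9$ ($X^\circ$ a K3, with $\chi = 2, q = 0$) or $n = 6$ ($X^\circ$ an Abelian surface, with $\chi = 0, q = 2$).

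Finally, in the irreducible case for $D_1$, $\Lambda_{D_1} = \Z D_1$ is one-dimensional and the trichotomy reduces to the sign of $D_1^2$. If $D_1^2 = 0$, then $D_1$ is a smooth curve of arithmetic genus $1$ (by adjunction), and Saint-Donat gives an elliptic fibration $S \to \PP^1$ with $D_1$ as fiber class, whence $\kappa(X) = 1$. If $D_1^2 > 0$, then $\kappa(X) = 2$; the divisibility $3L \sim D_1 + \sum_{j \geq 2} D_j$ combined with the $A_2(-1)$ structure of the $D_j$ ($j \geq 2$) forces $D_1^2 \in 6\Z$, so $D_1^2 = 6d$, and the congruence $d \equiv n - 1 \pmod 3$ (i.e.\ $d = n-1+3k$) is the solvability condition for this divisibility modulo torsion; meanwhile $n \leq 10$ follows from $\rho(S) \leq 20$ combined with the $2(n-1)$ independent $(-2)$-classes from the $A_2$-components plus the class of $D_1$, and $k \geq -2$ expresses the positivity of $L$. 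When $D_1$ is smooth, Proposition \ref{cor: example general type, irreducible D1} identifies all $(-1)$-curves on the minimal resolution of $X$, and the invariants $\chi(X^\circ)$, $K_{X^\circ}^2$, $e(X^\circ)$ then follow by substituting $D_1^2 = 6d$ into Proposition \ref{prop: rito numbers} and subtracting the $(-1)$-curve contributions. The hardest step, in my view, is the $\kappa = 0$ case: ruling out ADE configurations other than $A_2(-1)$ really requires combining the lattice divisibility for $\mathcal{L}^{\otimes 3}$ with a careful analysis of the gluing data of the Galois cover on each connected component of the branch.
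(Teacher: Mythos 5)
Your route (reduce $\kappa(X)$ to the Iitaka dimension of the branch divisor on $S$ and then argue lattice-theoretically) is genuinely different from the paper's, but its central reduction is not justified and it is exactly where the difficulty sits. First, for a totally ramified triple cover Hurwitz gives $K_X=2R_{\mathrm{red}}$ with $f^*B=3R_{\mathrm{red}}$, i.e. $3K_X\sim f^*(2B)$, not $K_X\equiv f^*B$ (harmless for Iitaka dimension, but symptomatic). More seriously, "negligible" singularities (Definition \ref{def.neg.sing}) only guarantee that $\chi$, $K^2$, $e$ and $h^i(\mathcal{O})$ of the minimal resolution are computed by Miranda's formulae; they say nothing about higher plurigenera, the paper's list (Theorem \ref{theo: list negligible}) is explicitly not exhaustive, and the theorem allows an arbitrary reduced singular branch. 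For a normal $X$ one only has $H^0(\widetilde{X},mK_{\widetilde{X}})\subseteq H^0(X,mK_X)$, so $\kappa$ of a smooth model is bounded \emph{above} by $\kappa(X,K_X)=\kappa(S,B)$; the inequality you actually need for "indefinite $\Rightarrow\kappa=2$" and "degenerate $\Rightarrow\kappa=1$" is the opposite one, and it is not automatic when the branch (hence $X$, which can acquire e.g. elliptic singularities) is bad. The paper avoids this by working with the moving part of the ramification divisor on the canonical resolution (Propositions \ref{prop_kodairaDim1} and \ref{prop_kodairaDim}) and by deducing the "if" directions by elimination from the mutually exclusive cases of Theorem \ref{theorem: three cases giusto}; your sketch supplies neither that elimination structure nor a direct lower bound for the plurigenera of the resolution.

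Several concrete steps are also incorrect. The inference that an indefinite $\Lambda_{D_i}$ "yields $B^2>0$" fails: in the situation of Lemma \ref{lemma: case (2 theorem)} and Proposition \ref{prop: case (2) theorem- possibilities} one has $D_1=F+\sum_jP_j$ with $D_1^2=0$ and total branch divisor of negative square, yet $\kappa(X)=2$; the correct substitute is an effective class of positive square supported on the components of $D_1$ (equivalently a Zariski decomposition), not the square of $B$. In the $\kappa=0$ case you swap the two possibilities: since $\chi(X')=6+\tfrac12(L^2+M^2)=6-\tfrac{2n}{3}$, it is $n=6$ that gives a K3 and $n=9$ an Abelian surface (Corollary \ref{cor: giusto}, via \cite{B98}), and the reduction of each connected component to a single $A_2(-1)$ is asserted rather than proved — the paper needs the discriminant-group argument ($\Lambda_{D_i}(-1)\simeq A_{3k-1}$ or $E_6$, with the $3$-torsion classes supported on disjoint $A_2$'s). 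Finally, the exact invariants of $X^{\circ}$ in the last bullet require showing that the only $(-1)$-curves are the $3(n-1)$ coming from the $A_2$-configurations; you defer this to Proposition \ref{cor: example general type, irreducible D1}, i.e. to the statement being proved, whereas the substantive content is the order-three automorphism and contraction analysis in the proof of Corollary \ref{cor: giusto irreducible}. (Also, $k\geq-2$ follows from $d>0$ together with $n\leq 10$, not from "positivity of $L$".)
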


We construct two interesting kinds of examples: in Corollary \ref{cor: pg=2 gen.type} we construct Galois triple covers $f:X\ra S$ of K3 surfaces $S$ which have $p_g(X)=2$. Hence the transcendental Hodge structure of the $X$ are of type $(2,\star,2)$. Since the pullback of the transcendental Hodge structure of $S$ is of K3 type, i.e. of type $(1,\star',1)$, there is a splitting of the transcendental Hodge structure of $X$ in the sum of two Hodge structures of K3 type. One of them is of course geometrically associated to a K3 surface, i.e. to $S$. It would be interest to find another K3 surface associated to the other Hodge structure of K3 type.

The second example of geometric interest is the construction of irregular triples cover of regular surface, see Section \ref{subsec: irregular}. If the Kodaira dimension of the surface $X$ is 0, or 1, the construction is well known: there are triple covers of K3 surfaces with Abelian surfaces (which are irregular and with Kodaira dimension 0); the base change on an elliptically fibered K3 surfaces often produces elliptic fibrations (with Kodaira dimension equal to 1) with a non rational base curve (which forces the surface to be irregular). The situation is more complicated if one requires that $X$ is a surface of general type: such covers exists, but are not very frequent (in the case of double covers classified in \cite{Gdouble} there are very few examples). Here we provide an explicit construction in Theorems \ref{theo: cover S16} and \ref{theo: cover S15}.

In Section \ref{sec: triple split non Galois cover} we briefly discuss the case split non-Galois for tripe cover of K3 surfaces and we provide an example in any admissible Kodaira dimension. The construction are based on the study of the Galois closure.

In Section \ref{sec: triple cover non split} we consider the most general and complicated case, i.e. the case where the vector bundle $\EE$ is indecomposable.
In this case we consider a vector bundle $\EE$ defined by the sequence 
$$0\ra \mathcal{L} \ra \EE^{\vee} \ra \mathcal{M}\otimes \mathcal{I}_Z\ra 0,$$ 
where $\mathcal{L}$ and $\mathcal{M}$ are lines bundles on $S$ and $Z$ a non empty 0-dimensional scheme. Our goal is to list reasonable conditions on $\mathcal{L}$ and $\mathcal{M}$ which assure the existence of the vector bundle $\EE$ and of a triple cover $X\ra S$ whose Tschirnhausen is $\EE$:
\begin{theorem}
Let $\mathcal{L}$ and $\mathcal{M}$ two lines bundle on a K3 surface $S$ such that
$$h^0(S,\mathcal{L}^{\vee}\otimes \mathcal{M})=0,\ h^1(S,\mathcal{L}^{\vee}\otimes \mathcal{M})\geq 1\ h^0(S,\mathcal{L}^{\otimes 2}\otimes \mathcal{M}^{\vee})\geq 1.$$
Let $Z$ be a non empty $0$-dimensional scheme on $S$. Then there exists the triple cover $f:X\ra S$ whose Tschirnhausen bundle is any rank two indecomposable vector bundle $\EE$ obtained by a non-split extension : $$0\ra \mathcal{L} \ra \EE^{\vee} \ra \mathcal{M}\otimes \mathcal{I}_Z\ra 0.$$\end{theorem}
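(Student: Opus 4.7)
The plan is to split the argument into two steps: first, construct the locally free, indecomposable sheaf $\EE^{\vee}$ sitting in a non-split extension of the prescribed shape via Serre's construction; second, produce the triple cover by applying the Miranda--Tan correspondence of Section~\ref{sec: triple cover in algebraic geometry} to a distinguished section built from the hypothesis $h^0(\mathcal{L}^{\otimes 2}\otimes \mathcal{M}^{\vee})\geq 1$.

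For the first step, extensions of $\mathcal{M}\otimes \mathcal{I}_Z$ by $\mathcal{L}$ are classified by $\Ext^1(\mathcal{M}\otimes \mathcal{I}_Z,\mathcal{L})$. Applying $\mathrm{Hom}(-,\mathcal{L})$ to the short exact sequence $0\to \mathcal{M}\otimes \mathcal{I}_Z\to \mathcal{M}\to \mathcal{M}|_Z\to 0$ and using $\mathcal{E}xt^{i}(\oo_Z,\oo_S)=0$ for $i<2$, one obtains an injection $\Ext^1(\mathcal{M},\mathcal{L})\hookrightarrow \Ext^1(\mathcal{M}\otimes \mathcal{I}_Z,\mathcal{L})$; Serre duality on the K3 surface $S$ identifies the source with $H^1(S,\mathcal{L}^{\vee}\otimes \mathcal{M})^{\vee}$, which is non-zero by hypothesis, so non-split classes exist. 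Since $K_S\cong \oo_S$ and $h^0(\mathcal{L}^{\vee}\otimes \mathcal{M})=0$, the Cayley--Bacharach condition for $Z$ with respect to $|\mathcal{L}^{\vee}\otimes \mathcal{M}|$ holds vacuously, so Serre's construction produces a locally free $\EE^{\vee}$ from a sufficiently general non-split class. Indecomposability then follows from the hypotheses: a splitting $\EE^{\vee}\cong \mathcal{L}'\oplus \mathcal{M}'$, combined with the fact that the projections of the inclusion $\mathcal{L}\hookrightarrow \EE^{\vee}$ and of the surjection $\EE^{\vee}\twoheadrightarrow \mathcal{M}\otimes\mathcal{I}_Z$ onto both summands must be non-zero (otherwise the extension splits, or $\mathcal{M}\otimes\mathcal{I}_Z$ becomes locally free, impossible for $Z\neq\emptyset$), forces a non-zero composition $\mathcal{L}\to \mathcal{L}'\to \mathcal{M}$ and hence a non-zero element of $H^0(\mathcal{L}^{\vee}\otimes \mathcal{M})$, contradicting the hypothesis.

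For the second step, by Miranda--Tan, triple covers of $S$ with Tschirnhausen bundle $\EE$ correspond to non-zero global sections of $\mathrm{Sym}^3\EE^{\vee}\otimes \det \EE$ (see Section~\ref{sec: triple cover in algebraic geometry}). The inclusion $\mathcal{L}\hookrightarrow \EE^{\vee}$ induces $\mathcal{L}^{\otimes 3}\hookrightarrow \mathrm{Sym}^3\EE^{\vee}$, and since $\det\EE^{\vee}\cong \mathcal{L}\otimes \mathcal{M}$ (two line bundles on $S$ agreeing outside the codimension-two set $Z$, hence globally), tensoring with $\det\EE$ yields a line-bundle inclusion
$$\mathcal{L}^{\otimes 2}\otimes \mathcal{M}^{\vee}\hookrightarrow \mathrm{Sym}^3\EE^{\vee}\otimes \det \EE.$$
Passing to global sections, the hypothesis $h^0(\mathcal{L}^{\otimes 2}\otimes \mathcal{M}^{\vee})\geq 1$ supplies a non-zero section of $\mathrm{Sym}^3\EE^{\vee}\otimes \det \EE$, which through the Miranda--Tan dictionary determines the required triple cover $f:X\to S$.

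The main difficulty is concentrated in the first step: one must simultaneously arrange non-splitness of the extension, local freeness of $\EE^{\vee}$ at every point of $Z$, and indecomposability of the resulting bundle. The three cohomological hypotheses are precisely calibrated for this balancing act --- $h^1(\mathcal{L}^{\vee}\otimes \mathcal{M})\geq 1$ provides non-split classes, $h^0(\mathcal{L}^{\vee}\otimes \mathcal{M})=0$ both makes Cayley--Bacharach vacuous and rules out competing splittings, and $h^0(\mathcal{L}^{\otimes 2}\otimes \mathcal{M}^{\vee})\geq 1$ supplies the needed section in step two. Once $\EE$ is in hand, the second step reduces to a routine filtration computation together with the Miranda--Tan dictionary.
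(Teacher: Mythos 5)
Your proposal is correct, and its skeleton coincides with the paper's proof: the vanishing $h^0(\mathcal{L}^{\vee}\otimes\mathcal{M})=0$ makes the Cayley--Bacharach condition hold vacuously, which (together with $\Ext^1(\mathcal{M}\otimes\mathcal{I}_Z,\mathcal{L})\neq 0$) yields a locally free, automatically non-split $\EE^{\vee}$, and the hypothesis $h^0(\mathcal{L}^{\otimes 2}\otimes\mathcal{M}^{\vee})\geq 1$ is then converted into a non-zero section of $S^3\EE^{\vee}\otimes\bigwedge^2\EE$, so Miranda's theorem produces the cover. Where you genuinely diverge is in how that section is produced: the paper runs the Eagon--Northcott complex twice, exhibiting the chain $\mathcal{L}^{2}\otimes\mathcal{M}^{-1}\hookrightarrow \EE^{\vee}\otimes\mathcal{L}\otimes\mathcal{M}^{-1}\hookrightarrow S^2\EE^{\vee}\otimes\mathcal{M}^{-1}\hookrightarrow S^3\EE^{\vee}\otimes\bigwedge^2\EE$, whereas you take the symmetric cube of the inclusion $\mathcal{L}\hookrightarrow\EE^{\vee}$ and twist by $\det\EE$; this is the same inclusion obtained more directly, trading the filtration machinery for the observation that a non-zero map of torsion-free rank-one sheaves stays injective under $S^3$ and twisting. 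You also supply an indecomposability argument (a splitting would force a non-zero map $\mathcal{L}\ra\mathcal{M}$, against $h^0(\mathcal{L}^{\vee}\otimes\mathcal{M})=0$), which the paper asserts in the statement but never proves -- a worthwhile addition. One caution: your opening remark that $\Ext^1(\mathcal{M},\mathcal{L})$ injects into $\Ext^1(\mathcal{M}\otimes\mathcal{I}_Z,\mathcal{L})$ should not be read as producing the class you use, since extensions pulled back from $\Ext^1(\mathcal{M},\mathcal{L})$ are never locally free along $Z$ (they lie in the kernel of the map to $H^0(\mathcal{E}xt^1)$); local freeness really comes from the Cayley--Bacharach/general-class step, which you do invoke correctly, so the remark is redundant rather than wrong.
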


Thanks to this theorem the problem of finding a vector bundle $\EE$ which defines non split triple cover, is reduced to the problem of finding certain line bundles on $S$, with required properties. 
We apply this Theorem to construct a non split triple cover with positive Kodaira dimension and in particular we perform all the computations in one case, obtaining a surface of Kodaira dimension 1, $p_g=6$, $q=3$.\\

	\textbf{Notation and conventions.} We work over the field $\mathbb{C}$
	of complex numbers. 
	
	For $a,b\in \Z$, $a\equiv_n b$ means $a\equiv b\mod n$.
	
	By ``\emph{surface}'' we mean a projective, non-singular surface $S$, and
	for such a surface $\omega_S=\oo_S(K_S)$ denotes the canonical
	class, $p_g(S)=h^0(S, \, \omega_S)$ is the \emph{geometric genus},
	$q(S)=h^1(S, \, \omega_S)$ is the \emph{irregularity} and
	$\chi(\mathcal{O}_S)=1-q(S)+p_g(S)$ is the \emph{Euler-Poincar\'e
		characteristic}. If $q(S)>0$, we call $S$ an \emph{irregular surface}. The minimal model of a surface $S$ will be denoted by $S^{\circ}$; the minimal resolution of the a singular surface $S$ will be denoted by $S'$.
	
	Throughout the paper, we denote Cartier (or Weil) divisors on a variety by
	capital letters and the corresponding line bundles by italic
	letters, so we write for instance $\mathcal{L}=\oo_S(L)$. Moreover, if $d \in H^0(\mathcal{L})$ the corresponding Weil divisor will be denoted by $D$.
	
	Given $Z$ be a purely 0-dimensional subscheme of a variety, we often call $Z$ a $0$-cycle and we denote by $\ell(Z)$ its length.
	
	For a locally free sheaf $\mathcal{F}$ we shall denote its total Chern class by $c(\mathcal{F})$ and its Chen Character by $ch(\mathcal{F})$. 
	
	\medskip
	
	\textbf{Acknowledgments}  Both authors were partially supported by GNSAGA-INdAM. We thank R. Pignatelli and F. Polizzi for useful discussions and suggestions. Both authors wish to thank the referee for many comments and suggestions that improved the presentation of these results.

	\section{Triple covers in algebraic geometry}\label{sec: triple cover in algebraic geometry}
	
	The case of triple covers of algebraic varieties differs sensibly from the double covers case, above of all because the cover might be not Galois. Therefore, a different approach is needed.  This theory of triple covers in algebraic geometry was started by R. Miranda in his seminal paper \cite{M85}, and developed further by Casnati--Ekedahl in \cite{CE96} and Tan in \cite{Tan02}, see also \cite{Pa91, FPV19}. The main result of this theory is the
	following.
	
	\begin{theorem} \emph{\cite[Theorem 1.1]{M85}} \label{teo.miranda}
		A triple cover $f \colon X \to Y$ of an algebraic variety $Y$ is
		determined by a rank $2$ vector bundle $\EE$ on $Y$ and by a global
		section $\eta \in H^0(Y, \, S^3 \EE^{\vee} \otimes \bigwedge^2
		\EE)$, and conversely. 
	\end{theorem}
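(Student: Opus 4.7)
The plan is to establish the bijection by extracting data from $f_*\oo_X$ in one direction and constructing an $\oo_Y$-algebra on $\oo_Y\oplus\EE$ in the other. The feature specific to the degree-$3$ case (equivalently, rank-$2$ Tschirnhausen bundle) that makes everything work is the perfect wedge pairing $\EE\otimes\EE\to\bigwedge^2\EE$, which for rank $2$ gives a canonical isomorphism $\EE\simeq\EE^\vee\otimes\bigwedge^2\EE$. This is what will let us reinterpret a multiplication map $m:S^2\EE\to\EE$ as a cubic form with values in $\bigwedge^2\EE$.

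\emph{Forward direction.} Given $f$, set $\mathcal{A}:=f_*\oo_X$. Since $f$ is finite flat of degree $3$, $\mathcal{A}$ is a locally free $\oo_Y$-algebra of rank $3$, and the trace of the regular representation $\Tr:\mathcal{A}\to\oo_Y$ satisfies $\Tr(1)=3$. In characteristic $0$ we therefore get the canonical splitting $\mathcal{A}=\oo_Y\oplus\EE$ with $\EE:=\ker\Tr$ of rank $2$. The commutative multiplication restricts to a symmetric $\oo_Y$-linear map $\mu=(b,m):S^2\EE\to\oo_Y\oplus\EE$; the summand $b$ is determined by the algebra structure through $b(x,y)=\tfrac{1}{3}\Tr(xy)$, so the only free datum is $m:S^2\EE\to\EE$. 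Using the rank-$2$ identification above (combined with the Pieri-type decomposition $\EE\otimes S^2\EE\cong S^3\EE\oplus\EE\otimes\bigwedge^2\EE$), one has a canonical decomposition
\[
\mathrm{Hom}(S^2\EE,\EE)\;\cong\;\bigl(S^3\EE^\vee\otimes\bigwedge^{\!2}\EE\bigr)\oplus\EE^\vee,
\]
and the key point is that the associativity of $\mathcal{A}$ forces $m$ to land in the first summand. The resulting section is $\eta\in H^0(Y,\,S^3\EE^\vee\otimes\bigwedge^2\EE)$.

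\emph{Converse.} Given $(\EE,\eta)$, the decomposition above converts $\eta$ into a symmetric map $m:S^2\EE\to\EE$. The component $b:S^2\EE\to\oo_Y$ is then forced on us by the associativity identities (and by the requirement that $\Tr$ coincide with the trace of the regular representation); in a local frame it can be written explicitly as a polynomial in the structure constants of $m$. Assembling $\mathcal{A}:=\oo_Y\oplus\EE$ with multiplication $(b,m)$ and setting $X:=\mathrm{Spec}_Y(\mathcal{A})$ yields a degree-$3$ flat cover of $Y$ with $f_*\oo_X\cong\mathcal{A}$. Mutual inverseness and naturality in $Y$ are evident from the constructions.

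\emph{Main obstacle.} The technical heart of the argument is the associativity analysis on both sides: one must check that associativity cuts out exactly the $S^3\EE^\vee\otimes\bigwedge^2\EE$ summand in the forward direction (no weaker condition, no stronger), and that every $\eta$ -- once supplemented with the forced $b$ -- gives an associative algebra in the converse direction. I would carry out both verifications simultaneously in a local frame $\{e_1,e_2\}$ of $\EE$. The most general multiplication on $\oo_Y\oplus\EE$ has nine structure constants, the two trace-zero conditions $\Tr(e_i)=0$ impose two linear relations, and the remaining associativity identities eliminate three further parameters, leaving a four-parameter family matching $\mathrm{rk}\bigl(S^3\EE^\vee\otimes\bigwedge^2\EE\bigr)=4$. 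This dimension match is the numerical shadow of Miranda's equivalence, and the explicit polynomial expressions of the $b$-coefficients in terms of the $m$-coefficients that fall out are exactly the formulas in \cite{M85}.
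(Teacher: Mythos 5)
The paper offers no proof of this statement: it is quoted directly as \cite[Theorem 1.1]{M85}, so the only meaningful comparison is with Miranda's original argument, and your proposal is essentially a reconstruction of it --- the trace splitting $f_*\oo_X=\oo_Y\oplus\EE$ with $\EE=\ker\Tr$, the reduction of the algebra structure to the symmetric map $(b,m)\colon S^2\EE\to\oo_Y\oplus\EE$, and the local structure-constant analysis (your count of nine constants, two trace relations, three associativity eliminations, four surviving parameters is exactly Miranda's computation). Your frame-independent decomposition $\mathrm{Hom}(S^2\EE,\EE)\cong\bigl(S^3\EE^\vee\otimes\bigwedge^2\EE\bigr)\oplus\EE^\vee$ is a genuine improvement in presentation: it makes the gluing of the local data into a global section of $S^3\EE^\vee\otimes\bigwedge^2\EE$ automatic, where Miranda instead verifies the transformation rule under change of frame by hand.

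One assertion in your forward direction is wrong and should be repaired, although your final paragraph silently corrects it. You claim that \emph{associativity} forces $m$ into the summand $S^3\EE^\vee\otimes\bigwedge^2\EE$. It does not: the $\EE^\vee$-component of $m$ under your decomposition is, up to a scalar, the linear form $x\mapsto\mathrm{tr}_\EE\bigl(m(x,-)\bigr)$, and since multiplication by $x\in\EE$ has vanishing $\oo_Y\to\oo_Y$ block, one has $\mathrm{tr}_\EE\bigl(m(x,-)\bigr)=\Tr(x)$; this vanishes precisely because you chose $\EE=\ker\Tr$, with associativity playing no role at this step. Associativity does the complementary job, and that is the real content of the converse: it determines $b$ uniquely as a quadratic expression in $m$, namely $b(x,y)=\mathrm{tr}_\EE\bigl(m(x,-)\circ m(y,-)\bigr)$ (in a local frame these are Miranda's formulas for the $\oo_Y$-components), and one must then check that with this $b$ \emph{every} traceless $m$ --- i.e.\ every $\eta$, with no further condition --- yields an associative algebra; that verification does go through, so your plan is sound once the two roles are stated correctly: the trace condition constrains $m$, and associativity determines $b$ while imposing nothing more on $m$.
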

	
	The vector bundle $\EE$ is called the \emph{Tschirnhausen bundle} of
	the cover, and it satisfies
	\begin{equation}\label{eq_OOE}
		f_{*}\oo_X = \oo_Y \oplus \EE.
	\end{equation}
	
	By \cite[Theorem 1.5]{CE96} if $Y$ is smooth and the section $\eta\in H^0(Y, \, S^3 \EE^{\vee} \otimes \bigwedge^2	\EE)$ is generic, then $X$ is Gorenstein. 
	
	Let $D$ be a divisor such that $\mathcal{O}_Y(D)=\bigwedge^2\EE^{-2}$.

	\begin{proposition} \emph{\cite[Theorem 1.3]{Tan02}}\label{thm_Branch}
		Let  $f \colon X \to Y$ be a triple cover, assume that $Y$ be a normal variety.
		There exist two divisors $D'$ and $D''$ such that $D=2D'+D''$ and if $f$ is totally ramified then $D''=0$ and $D'$ is the branch divisor; otherwise $D$ is the branch divisor and $D'$ is the divisor over which $f$ is totally ramified. 
	\end{proposition}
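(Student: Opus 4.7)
The plan is to identify $D'$ as the total ramification divisor of $f$ and $D''$ as the simple ramification divisor, and then recover the equality $D=2D'+D''$ by computing the vanishing order of the discriminant section of the cover along each prime divisor of $Y$. Since the asserted identity is one of Weil divisor classes on the normal variety $Y$, it suffices to verify multiplicities one prime divisor at a time. The line bundle $(\bigwedge^2\EE)^{\otimes -2}=\oo_Y(D)$ arises naturally here as the target of the discriminant of the trace form on $f_*\oo_X$: since $\det f_*\oo_X=\det(\oo_Y\oplus\EE)=\bigwedge^2\EE$, the discriminant is a canonical section $\delta\in H^0(Y,(\bigwedge^2\EE)^{\otimes -2})$ whose scheme of zeros is the branch locus.

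Next I would localize at the generic point of a prime divisor $\Gamma\subset Y$, obtaining a DVR $R=\oo_{Y,\Gamma}$ with uniformizer $t$. The $R$-algebra $f_*\oo_X\otimes_{\oo_Y}R$ is free of rank $3$ and, after passing to a sufficiently small \'etale neighbourhood, is generated by a primitive element satisfying a monic cubic $z^3+a z^2+b z+c$. Clearing the quadratic term via $z\mapsto z-a/3$ (available since we work over $\C$) puts it in canonical form $R[z]/(z^3-3pz+2q)$ with $p,q\in R$, and a direct computation yields $\delta=108(p^3-q^2)$ up to a unit of $R$.

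The cover is \'etale at $\Gamma$ precisely when $\delta$ is a unit; otherwise the reduction of the cubic modulo $t$ has a repeated root, and I would split into two cases. In the \emph{totally ramified} case, the reduction has a triple root, forcing both $p,q\in(t)$; hence $\mathrm{ord}_\Gamma(p^3)\geq 3$ and $\mathrm{ord}_\Gamma(q^2)\geq 2$, so $\mathrm{ord}_\Gamma(\delta)\geq 2$, and in the standard local model $p=tp_0$, $q=tq_0$ with $p_0,q_0$ units one finds $\delta=108\,t^2(tp_0^3-q_0^2)$ of order exactly $2$. In the \emph{simple ramification} case the reduction factors as $(z-\alpha)^2(z+2\alpha)$ with $\alpha\neq 0$; a direct expansion in $t$ of the coefficients of the deformation shows that $p^3-q^2$ has $t$-order equal to $1$.

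Setting $D'$ to be the sum of prime divisors along which $f$ is totally ramified and $D''$ to be the sum of those along which $f$ is only simply ramified, the multiplicity count gives $\mathrm{div}(\delta)=2D'+D''$, hence $D\sim 2D'+D''$. When the cover is totally ramified along every ramified prime (the cyclic Galois situation of Paragraph \ref{say: Galois enigespance}), $D''=0$ and $D'$ is the reduced branch divisor, so $D=2D'$; otherwise $D''\neq 0$ and $D=2D'+D''$ is the full (non-reduced) branch divisor. The main obstacle is the local presentation step: one must guarantee that at every codimension-one point of $Y$ the algebra $f_*\oo_X$ is \'etale-locally generated by a single element and rule out ``non-standard'' totally ramified strata where, for instance, $q$ vanishes to order higher than $p$ and produces $\mathrm{ord}_\Gamma(\delta)>2$. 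Both issues are controlled using normality of $X$ together with Miranda's local normal forms for the Tschirnhausen data after trivializing $\EE$.
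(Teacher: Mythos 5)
The paper itself gives no proof of this proposition: it is quoted verbatim from Tan (\cite[Theorem 1.3]{Tan02}), so your attempt can only be compared with the standard argument, which is essentially what you propose. Your skeleton is the right one: the trace form on $f_*\oo_X$ gives a discriminant section $\delta$ of $(\bigwedge^2\EE)^{-2}$ whose zero divisor represents $D$, and since $Y$ is normal the identity $D=2D'+D''$ may be checked one prime divisor $\Gamma$ at a time by computing $\mathrm{ord}_\Gamma(\delta)$ over the DVR $R=\oo_{Y,\Gamma}$. The monogenicity issue you flag is, by contrast, not a real obstacle: once $X$ is normal, $A=f_*\oo_X\otimes_{\oo_Y}R$ is a semi-local Dedekind ring, finite over a DVR with infinite residue field of characteristic zero, and such an algebra is generated by one element; no \'etale base change is needed.

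The genuine gap lies in the two exactness claims, which are the entire content of the statement, and both of which require normality of $X$ --- a hypothesis you must also add to the statement itself (it is a standing assumption in Tan's paper and in Section \ref{sec: the Galois case} of this one; without it the proposition is false, e.g.\ locally $z^3=t^2$ is totally ramified with $\mathrm{ord}_\Gamma(\delta)=4$). Your simple-ramification step fails as written: a double root in the reduction does \emph{not} force $\mathrm{ord}_\Gamma(p^3-q^2)=1$ by any ``direct expansion''. Take the cubic with roots $\tfrac{2}{3},\ t-\tfrac{1}{3},\ -t-\tfrac{1}{3}$ in $R$: its reduction is $\bigl(z+\tfrac{1}{3}\bigr)^2\bigl(z-\tfrac{2}{3}\bigr)$, yet its discriminant $(1-t)^2(1+t)^2(2t)^2$ has order $2$. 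Of course this algebra is not normal (its normalization is \'etale), which is exactly the point: normality is what must be used, and you invoke it only for the totally ramified case. The clean repair for both cases is classical ramification theory: for $X$ normal, $A$ is the integral closure of $R$ in a cubic \'etale $K$-algebra, the residue characteristic is $0$, hence all ramification is tame and $\mathrm{ord}_\Gamma\bigl(\mathrm{disc}(A/R)\bigr)=\sum_i f_i(e_i-1)$, which equals $2$ for total ramification ($e=3$, $f=1$), $1$ for simple ramification, and $0$ otherwise. Alternatively, in your monogenic presentation one can see exactly where normality enters: with a triple root in the reduction (so $p,q\in(t)$), the hypersurface $R[z]/(z^3-3pz+2q)$ is regular in codimension one if and only if $z^3-3pz+2q\notin(t,z)^2$, i.e.\ if and only if $\mathrm{ord}_\Gamma(q)=1$, and then $\mathrm{ord}_\Gamma(p^3-q^2)=2$ exactly, whatever $\mathrm{ord}_\Gamma(p)\geq 1$ is. It is this computation, not the choice of a ``standard local model'', that rules out the higher-order strata you mention.
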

	
	\begin{say}\label{say_diag}  We observe that there exists a divisor $D/2$ such that  $\mathcal{O}_Y(D/2)=\bigwedge^2\EE^{-1}$. By the previous proposition $D''=D-2D'$ is effective and it is two divisible (i.e. $D''=2(D/2-D')\in Pic(Y)$). Hence there exists a double cover of $Y$ branched on $D''$. This double cover is used to get the Galois closure of the triple cover whose Galois group is $\mathfrak{S}_3$ (see \cite{Tan02,CP}). We have the following digram:
		
		\begin{eqnarray}\label{diag split triple cover}\xymatrix{&&Z\ar[dd]_{\mathfrak{S}_3}\ar[dl]_{2:1}^{\alpha}\ar[dr]^{3:1}_{\beta_2}\\&X\ar[dr]_{3:1}^f&&W\ar[dl]^{2:1}_{\beta_1}\supset \beta_1^{-1}(D')\\
			&D'\cup D''\subset &Y&\supset D''}\end{eqnarray}
		
		We notice that the branch locus of $\beta_1$ is $D''$; $\beta_2$ is a Galois triple cover branched along $\beta_1^{-1}(D')$; the triple cover $f$ is totally branched on $D'$ and simply on $D''$. Finally it is worth to notice that this is a special case of dihedral cover studied in \cite{CP}.\\
	\end{say}
	
	If $Y$ is smooth, then $f$ is smooth over
	$Y - D$, in other words all the singularities of $X$ come from the
	singularities of the branch locus. More precisely, we have
	
	\begin{prop} \emph{\cite[Proposition 5.4]{Pa89}, \cite[Theorem 3.2]{Tan02}} \label{prop.sing.ram}
		Let $Y$ be a smooth variety. Let $y\in Y$, $f^{-1}(y)$ is a singular point of $X$ if and only if  $y\in \emph{Sing}(D)$ and one of the following conditions holds:
		\begin{itemize}
			\item[$(i)$] $f$ in not totally ramified over $y;$
			\item[$(ii)$] $f$ is totally ramified over $y$ and
			$\emph{mult}_y(D) \geq 3$.
		\end{itemize}
		So -- using the notation of Proposition \ref{thm_Branch} --  $X$ is smooth if and only if
		\begin{enumerate}
			\item $D'$ is smooth;
			\item $D''$ and $D'$ have no common points;
			\item $D''$ has only cusps as singular points where $f$ is totally ramified. 
		\end{enumerate}
	\end{prop}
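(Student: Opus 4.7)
The plan is a local analytic computation in the Tschirnhausen normal form, from which the statement (i)--(ii) follows by the Jacobian criterion, and then the global conditions (1)--(3) are read off. I work in a formal neighborhood of a point $y\in Y$. Since $Y$ is smooth and $\EE$ has rank two, $\EE$ is trivial in such a neighborhood, and by Miranda's Theorem~\ref{teo.miranda} the cover is cut out by a monic cubic polynomial in one variable $z$. After the Tschirnhausen substitution (completing the cube to kill the $z^{2}$-term), I may assume the local defining equation of $X\subset Y\times\mathbb{A}^{1}$ is
\[
F(y,z)\;=\;z^{3}+p(y)\,z+q(y)\;=\;0,\qquad p,q\in \oo_{Y,y}.
\]
Then the branch divisor $D$ has local equation the discriminant $\Delta=-4p^{3}-27q^{2}$; total ramification at $y$ is equivalent to $p(y)=q(y)=0$ (the unique root being $z=0$, triple), while simple (not total) ramification at $y$ is equivalent to $\Delta(y)=0$ together with $(p(y),q(y))\neq(0,0)$, so that $F$ has a double root $z_{0}\neq 0$ and a distinct simple root.

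Next I apply the Jacobian criterion to $F=0$. A point $(y,z_{0})\in X$ is singular precisely when (a) $3z_{0}^{2}+p(y)=0$, i.e.\ $z_{0}$ is a multiple root of the fiber cubic, and (b) $z_{0}\,dp(y)+dq(y)=0$ in the cotangent space at $y$. Condition (a) forces $y\in D$. The technical heart of the argument is to show that on $D$ the two linear forms $d\Delta(y)$ and $z_{0}\,dp(y)+dq(y)$ are proportional with nonzero factor: for a simple-ramification point with $p(y)\neq 0$ one has $z_{0}=-3q(y)/(2p(y))$ by Vieta applied to $F$ and $F'$, while for a totally ramified point $z_{0}=0$; in either case a direct differentiation of $\Delta=-4p^{3}-27q^{2}$ identifies the two forms up to a nonzero scalar. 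This proportionality immediately yields case (i) and one implication of case (ii): at a smooth point of $D$ the form $d\Delta(y)$ is nonzero, so (b) fails and $X$ is smooth over $y$; at a point of $\mathrm{Sing}(D)$ not totally ramified, both (a) and (b) hold and $X$ is singular.

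For the remaining content of (ii), assume $y\in\mathrm{Sing}(D)$ and $f$ totally ramified, so $p(y)=q(y)=0$ and $z_{0}=0$: then (a) is automatic and (b) reduces to $dq(y)=0$. If $dq(y)\neq 0$, then $\operatorname{mult}_{y}(q^{2})=2$ and $\operatorname{mult}_{y}(p^{3})\geq 3$, so $\operatorname{mult}_{y}(D)=2$ and $y$ is a smooth point of $D$, contrary to assumption; conversely $dq(y)=0$ yields $\operatorname{mult}_{y}(D)\geq 3$. Combined with the previous paragraph, this gives the equivalence (i)--(ii).

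Finally, conditions (1)--(3) follow by interpreting the local classification at every point of $D=2D'+D''$. A singular point of $D'$ is a totally ramified point where $D$ has multiplicity $\geq 4$, violating (ii); a point of $D'\cap D''$ lies in $\mathrm{Sing}(D)$ with both ramification types present, and either (i) or (ii) applies; a non-cuspidal singularity of $D''$, or a cusp of $D''$ not at a totally ramified point, falls under (i). Conversely, when (1)--(3) hold, every $y\in\mathrm{Sing}(D)$ is a cusp of $D''$ at a totally ramified point where, choosing $p,q$ as local coordinates, $\Delta=-4p^{3}-27q^{2}$ has multiplicity $2$, so by case (ii) $X$ is smooth over $y$. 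The main obstacle is establishing the proportionality of $d\Delta(y)$ and $z_{0}\,dp(y)+dq(y)$ along $D$; once that identification between the discriminant gradient and the Jacobian gradient is in hand, the rest is a straightforward classification of local branch configurations.
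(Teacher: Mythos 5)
The paper does not prove this proposition at all: it is quoted verbatim from \cite[Proposition 5.4]{Pa89} and \cite[Theorem 3.2]{Tan02}, so your argument can only be judged on its own merits. The decisive problem is the very first reduction. Theorem \ref{teo.miranda} does \emph{not} say that, after trivializing $\EE$ near $y$, the cover is cut out by a monic cubic in one variable: Miranda's structure theorem realizes $X$ inside the total space of the rank-two bundle $\EE$ via the quadratic multiplication relations of the algebra $\oo_Y\oplus\EE$ (equivalently, for Gorenstein covers, as a relative cubic divisor in $\mathbb{P}(\EE)$ in the sense of Casnati--Ekedahl). Passing to a hypersurface $\{z^3+pz+q=0\}\subset U\times\mathbb{A}^1$ requires $f_*\oo_X$ to be generated by a single element over $\oo_{Y,y}$, i.e.\ the fiber algebra over $y$ to be Gorenstein; this fails precisely when the fiber is $\C[z,w]/(z,w)^2$, and such points genuinely occur for normal triple covers of smooth surfaces. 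Indeed the $\frac{1}{3}(1,1)$ points of this very paper (branch locus with local equation $xy^2$, the singularities of type 2 of \ref{say: sing type 2}) are of this kind: on the normalization of $z^3=xy^2$ the fiber over the origin is $\C[z,w]/(z,w)^2$, so no monic cubic normal form exists there. Consequently your argument proves the implication ``$y\in\mathrm{Sing}(D)$ and (i) or (ii) $\Rightarrow$ $f^{-1}(y)$ singular'' (by contraposition, since smooth points do have monogenic fibers), but the converse direction ``singular $\Rightarrow$ the stated conditions'' is simply not covered at non-Gorenstein points; closing it requires working with Miranda's local data $(a,b,c,d)$ or with the cubic form on $\mathbb{P}(\EE)$ at those points.

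Within the monogenic situation your computations are essentially right, but two assertions are false as stated. First, the proportionality is $d\Delta=-108\,z_0^3\,(z_0\,dp+dq)$, so the factor is \emph{zero} at totally ramified points ($z_0=0$); your claim that it is nonzero ``in either case'' is wrong, though harmless because you then treat the totally ramified case directly. Second, ``$\mathrm{mult}_y(D)=2$ and $y$ is a smooth point of $D$, contrary to assumption'' is incorrect: multiplicity-two points of a divisor (nodes, cusps) are singular, and $D=2D'+D''$ is non-reduced along $D'$. For instance $p=x$, $q=y$ gives $\Delta=-4x^3-27y^2$, a cusp at a totally ramified point with $dq(y)\neq0$ and $X$ smooth; no contradiction with $y\in\mathrm{Sing}(D)$ arises. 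The correct reading of your own computation is: $dq(y)\neq0\Leftrightarrow\mathrm{mult}_y(D)=2$, so (ii) fails and smoothness is consistent, while $dq(y)=0\Rightarrow\mathrm{mult}_y(D)\geq3$; with that repair the totally ramified case goes through, provided $\mathrm{Sing}(D)$ and $\mathrm{mult}_y(D)$ are systematically taken for the full non-reduced divisor $2D'+D''$ (your discriminant does this implicitly). In the global part you should also account for the points of $D'$ themselves, which lie in $\mathrm{Sing}(D)$ in this scheme-theoretic sense (there $\mathrm{mult}_y(D)=2$ and total ramification give smoothness), rather than only for the cusps of $D''$.
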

We observe that $(1)$ is due to the multiplicity 2 of the divisor $D'$ in the branch divisor $D$ and that even if $D''$ is the divisor where $f$ is simply branched it could contain isolated points of total branch.
	
	\begin{prop} \emph{\cite[Theorem 4.1]{Tan02}} \label{prop.can.ris}
		Let $f \colon X \to Y$ be a triple cover of a smooth surface $Y$,
		with
		$X$ normal. Then there are a finite number of blow-ups
		$ \sigma \colon \widetilde{Y} \to Y$ of $Y$ and a commutative diagram
		\begin{equation} \label{dia.can}
			\begin{xy}
				\xymatrix{
					\widetilde{X}  \ar[d]_{\tilde{f}} \ar[rr]^{\tilde{\sigma}} & & X \ar[d]^{f} \\
					\widetilde{Y}   \ar[rr]^{\sigma} & & Y,  \\
				}
			\end{xy}
		\end{equation}
		where $\widetilde{X}$ is the normalization of $\widetilde{Y}
		\times_{Y} X$, such that $\tilde{f}$ is a triple cover with smooth
		branch locus. In particular, $\widetilde{X}$ is a resolution of
		the singularities of $X$ (in general this resolution is neither the minimal resolution nor it gives a minimal model of $X$).
	\end{prop}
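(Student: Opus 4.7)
The plan is to reduce the existence of the canonical resolution to the classical embedded resolution of plane curve singularities applied to the branch divisor $D = 2D'+D''$ of Proposition \ref{thm_Branch}, together with the good functorial behaviour of triple covers under pullback.

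First I would recall the local picture. By Theorem \ref{teo.miranda} the cover $f$ is locally described, near any point $y\in Y$, by a cubic equation whose coefficients are sections of $S^3\EE^{\vee}\otimes \bigwedge^{2}\EE$, and by Proposition \ref{prop.sing.ram} the singular locus of $X$ lies entirely over $\mathrm{Sing}(D)$. Fix a singular point $p\in \mathrm{Sing}(D)$ and let $\sigma_1\colon Y_1\to Y$ be the blow-up of $Y$ at $p$. Pulling back yields a rank two bundle $\sigma_1^{*}\EE$ on $Y_1$ and a section $\sigma_1^{*}\eta\in H^0(Y_1, S^3(\sigma_1^{*}\EE)^{\vee}\otimes \bigwedge^{2}\sigma_1^{*}\EE)$; by Theorem \ref{teo.miranda} these data define a triple cover of $Y_1$, and since $Y_1\times_Y X\to Y_1$ is this cover away from the exceptional set, its normalization $X_1$ is exactly that triple cover. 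Thus the diagram (\ref{dia.can}) is produced inductively at each step, and each $X_i$ is normal.

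The next step is to analyse how the branch divisor transforms. If $p$ has multiplicity $m$ on $D$ (counted with the weights $2$ on $D'$ and $1$ on $D''$), the branch divisor of the new cover $X_1\to Y_1$ is the total transform $\sigma_1^{*}D$ minus an appropriate multiple of the exceptional curve $E$ dictated by how $\bigwedge^2\EE$ pulls back and by the formula $\mathcal{O}_{Y_1}(\tilde D)=\bigwedge^{2}(\sigma_1^{*}\EE)^{-2}$. Writing $\sigma_1^{*}D=\tilde D+mE$ and using Proposition \ref{thm_Branch} on $Y_1$, one checks that the new divisors $D'_1$ and $D''_1$ are either the strict transforms of $D'$ and $D''$ or differ from them by $E$ with a coefficient $0$ or $1$, so that both remain effective and the new branch $D_1=2D_1'+D_1''$ is supported on the strict transform of $D$ plus possibly $E$. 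A standard invariant such as the sum over singular points of the Milnor number of the reduced branch divisor $D_{\red}$ therefore strictly decreases; equivalently, this is the usual measure used in the Zariski–Hironaka embedded resolution of plane curves.

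Iterating the construction, after finitely many blow-ups $\sigma\colon \widetilde Y\to Y$ the branch divisor $\widetilde D$ of the resulting triple cover $\tilde f\colon \widetilde X\to \widetilde Y$ becomes smooth (in particular, by Proposition \ref{prop.sing.ram}, $\widetilde X$ is smooth). Since $\widetilde X$ agrees with the normalization of $\widetilde Y\times_Y X$ by construction and is normal and birational to $X$, it provides a resolution of the singularities of $X$. The hard step in this plan is the combinatorial analysis of the second paragraph, i.e.\ tracking precisely how the split $D=2D'+D''$ transforms under a single blow-up and verifying that a sensible invariant of the branch decreases; once that is done the termination follows from classical embedded resolution on surfaces, and one cannot expect the outcome to be minimal because superfluous $(-1)$-curves may appear in the exceptional locus.
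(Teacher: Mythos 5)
This proposition is not proved in the paper at all: it is quoted verbatim from \cite[Theorem 4.1]{Tan02}, so there is no internal argument to compare yours with; what can be judged is whether your sketch would actually establish Tan's theorem, and as it stands it does not, because the step you yourself flag as ``the hard step'' is precisely the mathematical content of the result. Concretely, the cover of $Y_1$ determined by the pulled-back data $(\sigma_1^{*}\EE,\sigma_1^{*}\eta)$ \emph{is} the fibre product $Y_1\times_Y X$, with branch divisor the full total transform $\sigma_1^{*}D$ (since $\bigwedge^2\sigma_1^{*}\EE=\sigma_1^{*}\bigwedge^2\EE$, nothing is subtracted at this stage); your assertion that its normalization ``is exactly that triple cover'' is only true when the fibre product is already normal, which fails exactly in the interesting cases where $E$ enters the branch with too high a multiplicity. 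The normalization is a \emph{different} triple cover, whose Tschirnhausen bundle is a twist $\sigma_1^{*}\EE\otimes\oo_{Y_1}(kE)$ for a suitable $k$ (the triple-cover analogue of subtracting $2\lfloor m/2\rfloor E$ for double covers, and of \cite[Proposition 7.5]{M85} in the Galois case), and one must prove: (i) that the normalization is again a triple cover of this form, with the section $\eta$ transforming so that the new $D_1'$, $D_1''$ are effective and $D_1=2D_1'+D_1''$; and (ii) that a well-chosen invariant of the new branch data strictly decreases. You reduce both points to ``one checks'', but (i) is not formal, and for (ii) the invariant you suggest (the sum of Milnor numbers of $D_{\red}$) is not obviously monotone, since the exceptional curve can join the branch and create new singular points of the reduced branch; the actual argument has to track the multiplicities of $D'$ and $D''$ at the centre separately and argue by a more careful (e.g.\ lexicographic) induction.

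In short, your overall strategy --- blow up centres on $\mathrm{Sing}(D)$, pull back the cover, normalize, and induct until the branch is smooth, then invoke Proposition \ref{prop.sing.ram} to conclude $\widetilde X$ is smooth --- is the right one and is the same in spirit as Tan's proof, but the proposal as written has a genuine gap: the identification of the normalized cover with a twisted triple cover and the verification of a strictly decreasing invariant are asserted rather than proved, and without them neither the inductive step nor the termination of the process is established.
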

	Following \cite[Paragraph 4]{Tan02}, we call $\widetilde{X}$ the \emph{canonical resolution} of $X$.

	In the case of
	smooth surfaces, one has the following formulae.
	\begin{prop} \emph{\cite[Propositions 4.7 and 10.3]{M85}} \label{prop.invariants}
		Let $f \colon X \rightarrow Y$ be
		a triple cover of smooth surfaces with Tschirnhausen bundle $\EE$.
		Then
		\begin{itemize}
			\item[$\boldsymbol{(i)}$] $h^i(X, \, \mathcal{O}_X)=h^i(Y, \,
			\mathcal{O}_Y)+h^i(Y, \, \EE)$ for all $i\geq 0;$
			\item[$\boldsymbol{(ii)}$] $\chi(\mathcal{O}_X) = \chi(\mathcal{O}_Y) + \chi(\EE) = 3\chi(\mathcal{O}_Y) + \frac{1}{2}c^2_1(\EE) - \frac{1}{2}c_1(\EE)K_Y - c_2(\EE)$.
			\item[$\boldsymbol{(iii)}$]
			$K^2_X=3K^2_Y-4c_1(\EE)K_Y+2c_1^2(\EE)-3c_2(\EE)$.
			\item[$\boldsymbol{(iv)}$] $e(X) = 3e(Y) - 2c_1(\EE)K_Y + 4c^2_1(\EE)-9c_2(\EE)$.
			
		\end{itemize}
	\end{prop}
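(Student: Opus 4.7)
The plan is to exploit the structural isomorphism $f_{*}\mathcal{O}_X = \mathcal{O}_Y \oplus \EE$ from equation (\ref{eq_OOE}) throughout. Part $(i)$ follows from the fact that $f$ is finite and therefore affine, so $R^if_*\mathcal{O}_X = 0$ for $i \geq 1$; the Leray spectral sequence then collapses and yields $H^i(X, \mathcal{O}_X) = H^i(Y, f_*\mathcal{O}_X) = H^i(Y, \mathcal{O}_Y) \oplus H^i(Y, \EE)$. Part $(ii)$ follows from $(i)$ by taking alternating sums to obtain $\chi(\mathcal{O}_X) = \chi(\mathcal{O}_Y) + \chi(\EE)$, and then applying Hirzebruch--Riemann--Roch to the rank-$2$ bundle $\EE$ on the smooth surface $Y$, which gives
\[
\chi(\EE) = 2\chi(\mathcal{O}_Y) + \tfrac{1}{2}\bigl(c_1(\EE)^2 - 2c_2(\EE)\bigr) - \tfrac{1}{2}c_1(\EE)K_Y.
\]

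For $(iii)$, I would realize $X$ as a divisor in the $\mathbb{P}^1$-bundle $\pi\colon P := \mathbb{P}(\EE^{\vee}) \to Y$. Since $\pi_*\mathcal{O}_P(3) = S^3\EE^{\vee}$, the section $\eta \in H^0(Y, S^3\EE^{\vee} \otimes \bigwedge^2\EE)$ from Theorem \ref{teo.miranda} corresponds to a section of $\mathcal{O}_P(3) \otimes \pi^*\bigwedge^2\EE$ whose zero locus is $X$; hence $[X] = 3\xi + \pi^*c_1(\EE)$ in $A^1(P)$, where $\xi = c_1(\mathcal{O}_P(1))$. From the standard formula $\omega_{P/Y} = \mathcal{O}_P(-2) \otimes \pi^*(\bigwedge^2\EE)^{-1}$ and adjunction I get $K_X = (\xi + \pi^*K_Y)|_X$, so
\[
K_X^2 \;=\; \int_P (\xi + \pi^*K_Y)^2 \cdot (3\xi + \pi^*c_1(\EE))
\]
which I compute by expanding and applying the Grothendieck relation $\xi^2 = -\pi^*c_1(\EE)\xi - \pi^*c_2(\EE)$ (equivalently $\xi^2 - \pi^*c_1(\EE^{\vee})\xi + \pi^*c_2(\EE^{\vee}) = 0$ in the convention $\pi_*\mathcal{O}_P(n) = S^n\EE^{\vee}$), together with the projection formula $\pi_*(\xi) = [Y]$ and $\pi_*(1) = 0$. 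This yields the formula in $(iii)$. Part $(iv)$ then drops out for free from Noether's formula on $X$: combining $\chi(\mathcal{O}_X) = \tfrac{1}{12}(K_X^2 + e(X))$ with $(ii)$, $(iii)$, and Noether on $Y$, and solving for $e(X)$, produces the claimed expression.

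The main obstacle is the careful tracking of conventions in step $(iii)$: the resulting formula is sensitive both to the choice between $\mathbb{P}(\EE)$ and $\mathbb{P}(\EE^{\vee})$ and to the sign in the Grothendieck relation; a wrong choice in either gives formulas differing by signs from the stated ones. Once these conventions are pinned down consistently with the paper's normalization $f_*\mathcal{O}_X = \mathcal{O}_Y \oplus \EE$ (and hence with Proposition \ref{thm_Branch}, which forces $D = -2c_1(\EE)$ to be effective), the remaining intersection-theoretic bookkeeping on $P$ is entirely mechanical.
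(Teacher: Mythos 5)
Your proposal cannot be compared against an internal proof, because the paper contains none: this proposition is quoted verbatim from \cite[Propositions 4.7 and 10.3]{M85}, so the only benchmark is Miranda's original argument. Your reconstruction is correct, and in substance it is the same computation. Parts $(i)$ and $(ii)$ are exactly the standard route: finiteness of $f$ kills the higher direct images, Leray plus $f_*\mathcal{O}_X=\mathcal{O}_Y\oplus\EE$ gives $(i)$, and Hirzebruch--Riemann--Roch for the rank-$2$ bundle $\EE$ gives $(ii)$; your expression for $\chi(\EE)$ is the right one. In $(iii)$ your conventions are internally consistent: with $P=\mathbb{P}(\EE^{\vee})$ normalized so that $\pi_*\mathcal{O}_P(n)=S^n\EE^{\vee}$, the projection formula identifies $H^0\big(P,\mathcal{O}_P(3)\otimes\pi^*\bigwedge^2\EE\big)$ with $H^0\big(Y,S^3\EE^{\vee}\otimes\bigwedge^2\EE\big)$, the Grothendieck relation is indeed $\xi^2=-\pi^*c_1(\EE)\,\xi-\pi^*c_2(\EE)$, one has $\omega_{P/Y}=\mathcal{O}_P(-2)\otimes\pi^*\big(\bigwedge^2\EE\big)^{-1}$, and adjunction gives $K_X=(\xi+\pi^*K_Y)|_X$; expanding $\int_P(\xi+\pi^*K_Y)^2\cdot(3\xi+\pi^*c_1(\EE))$ using $\pi_*\xi=[Y]$ and $\pi_*1=0$ yields precisely $(iii)$, and $(iv)$ then follows from Noether's formula on the smooth surfaces $X$ and $Y$ combined with $(ii)$ and $(iii)$. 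The one step you assert rather than justify is that $X$ really sits inside $P$ as the zero divisor of the section corresponding to $\eta$: Theorem \ref{teo.miranda} by itself only produces $X=\mathrm{Spec}_Y(\mathcal{O}_Y\oplus\EE)$, and the relative-cubic realization is the degree-$3$ case of the Casnati--Ekedahl structure theorem, which requires $X$ to be Gorenstein. This is automatic here because $X$ is assumed smooth, but it is genuinely needed: for triple covers whose section $\eta$ vanishes at a point the fiber algebra is non-Gorenstein and $X$ is not a divisor in $P$ at all. With that one remark made explicit, your proof is complete.
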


	\begin{say}\label{say_Canonical}  Here we analyze shortly the relation between the canonical bundle of the covering surface and the base one. Let  $f\colon X \longrightarrow Y$ be a triple cover with Tschirnhausen bundle $\EE$. We assume that $Y$ is smooth and $X$ normal. Following \cite{CE96}, we observe that to each cover $f:X\ra Y$ of degree $d$ it is associated an exact sequence
		$$0\ra \mathcal{O}_Y\ra f_*\mathcal{O}_X\ra \EE^{\vee}\ra 0$$ whose dual sequence is $$0\ra \EE\ra f_*\omega_{X|Y}\ra\mathcal{O}_Y\ra 0,$$
		defining the relative dualizing sheaf $\omega_{X|Y}$. 
		
		If we assume that $X$ is Gorenstein, by \cite[Theorem 2.1]{CE96}, the ramification divisor $R$ satisfies $\mathcal{O}_X(R)=\omega_{X|Y}$ where $R$ is the set of the critical points of the map $f:X\ra Y$. 
		Being $X$ normal, following \cite{R87} we define the canonical divisor $K_X$ of $X$ as the Weil divisor whose restriction to the smooth locus is the canonical divisor. Since $X$ is assumed to be Gorenstein, $K_X$ is a Cartier divisor as well.
		
		The restriction of $f$ to $X_{0}$, the smooth locus of $X$, is a triple cover $f_{0}:X_{0}\ra f(X_{0})$. By Hurwitz ramification formula $$K_{X_{0}} = f_{0}^*(K_{f(X_{0})})+(\omega_{X|Y})_{|X_0}.$$
		Indeed by the definition of the canonical divisor we obtain the following equality
		\begin{equation}\label{canonicalformula1}
		K_X = f^*(K_Y)+R.
		\end{equation}

		In the sequel, we shall be particularly interested in the case when $Y$ is a K3 surface -- or in general when $Y$ is a surface with trivial canonical bundle--. Then the Equation \eqref{canonicalformula1} simplifies to 
			\begin{equation}\label{canonicalformula}
				K_X = f^*(K_Y)+R=R.
			\end{equation}
		In addition, by Equation \eqref{eq_OOE} and by duality for finite flat morphisms we obtain 
			\begin{equation*}
				f_*\oo_X(K_X)\cong \big(f_*\oo_X\big)^{\vee}\otimes \oo_Y(K_Y) \cong \oo_Y(K_Y) \oplus \big(\EE^{\vee} \otimes \oo_Y(K_Y)\big),
			\end{equation*}
			which yields at once 
			\begin{equation}\label{eq_h0k}
				h^0(\oo_X(K_X)) \geq h^0(\oo_Y(K_Y)).
			\end{equation}
	
		The formula \eqref{canonicalformula} is particularly useful to determine the Kodaira dimension of the covering surface $X$, indeed it holds:
	\end{say}
	
	\begin{prop}\label{prop_kodairaDim1}
		Let  $f\colon X \longrightarrow Y$ be a triple cover with $Y$ smooth and $X$ Gorenstein  and let $R$ be the ramification divisor. Write $|R|=|M|+F$ with $|M|$ the moving part and $F$ the fixed part of $R$. Suppose that $\oo(K_Y)\cong \oo_Y$, then the Kodaira dimension $\kappa(X)$ of $X$ is greater than or equal to $0$. Moreover,  it holds:
		\begin{description}
			\item[$\mathbf{\kappa(X)=0}$] if and only if $|M| = \emptyset$ and either $F$ is supported on rational curves or $F=\emptyset$. Moreover, $X$ cannot be neither an Enriques surface nor a bielliptic one. 
			
			\item[$\mathbf{\kappa(X)=1}$] if and only if $|M| \neq \emptyset$  and the general member of $|M|$ is supported on elliptic curves. 
			\item[$\mathbf{\kappa(X)=2}$] if and only if $|M| \neq \emptyset$  and the general member of $|M|$ is supported on curves of genus $g \geq 2$. 
		\end{description}
	\end{prop}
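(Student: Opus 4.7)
The proof rests entirely on the identity $K_X = R$ from Equation \eqref{canonicalformula}, which reduces the question to the asymptotic behaviour of the pluri-canonical systems $|nK_X| = |nR|$. The bound $\kappa(X)\geq 0$ is immediate: since $R$ is effective, $h^0(K_X) = h^0(R) \geq 1$, hence $p_g(X) \geq 1$.

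For the case $\kappa(X) = 0$, the forward direction follows from the Enriques--Kodaira classification. The minimal model $X^{\circ}$ must be K3, Enriques, abelian, or bielliptic; since $p_g$ is a birational invariant and Enriques/bielliptic have $p_g = 0$, the bound $p_g(X)\geq 1$ forces $X^{\circ}$ to be K3 or abelian with $K_{X^{\circ}} = 0$, so $p_g(X) = 1$ and $h^0(R) = 1$, i.e.\ $|M| = \emptyset$. Writing $K_X = \pi^{*}K_{X^{\circ}} + E = E$ for a minimal model contraction $\pi : X \to X^{\circ}$ (after passing, if necessary, to a smooth model), one sees $R = E$ is supported on the rational exceptional curves. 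The converse direction, assuming $|M| = \emptyset$ with $F$ supported on rational curves, proceeds through the Zariski decomposition $K_X = P + N$: since the supports of $P$ and $N$ are contained in that of $K_X$, both lie on rational curves. Using adjunction $K_X\cdot C + C^2 = -2$ on each smooth rational component $C$, together with $P\cdot C \geq 0$ (nef) and the orthogonality $P\cdot N_j = 0$, one shows that the positive part $P$ must vanish; hence $K_X = N$ is contractible and $K_{X^{\circ}}\equiv 0$, giving $\kappa(X) = 0$. In both directions the exclusion of Enriques and bielliptic minimal models is forced by $p_g(X)\geq 1$.

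For the cases $\kappa(X) \geq 1$, a nonempty moving part $|M|\neq\emptyset$ gives $h^0(R)\geq 2$, so $\kappa(X)\geq 1$. The dichotomy between $\kappa = 1$ and $\kappa = 2$ is controlled by a general member $M\in|M|$: adjunction gives $2p_a(M)-2 = M\cdot(M+K_X) = M\cdot(2M+F)$. If the general $M$ is supported on elliptic curves, then $M^2 = 0$ (a moving pencil of elliptic curves is automatically base-point-free with trivial self-intersection), so $|M|$ induces an elliptic fibration on $X$ and $\kappa(X) = 1$; on the minimal model this becomes the minimal elliptic fibration. If the general $M$ is supported on curves of genus $\geq 2$, then $M^2 > 0$ (elseadjunction would force $p_a(M)\leq 1$), so $R = K_X$ is big, and $\kappa(X) = 2$.

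The main obstacle is the converse direction for $\kappa(X) = 0$: establishing that $|M|=\emptyset$ together with $F$ supported on rational curves forces the Zariski positive part $P$ of $K_X$ to vanish. This step is where the hypothesis on rational support is used essentially rather than following automatically from minimal-model theory, and it requires a careful bookkeeping with adjunction on the configuration of rational components to rule out any nontrivial nef contribution. Once this is in place, the three cases exhaust all possibilities and the proposition follows from the Enriques--Kodaira classification.
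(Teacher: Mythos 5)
Your reduction to $K_X=R$, the bound $\kappa(X)\geq 0$, the exclusion of Enriques and bielliptic surfaces via $p_g\geq 1$, and the treatment of the cases $|M|\neq\emptyset$ are in line with the paper's argument (the paper likewise gets $M^2=0$ by adjunction when the moving part is elliptic). The genuine gap is precisely the step you yourself flag as ``the main obstacle'', the implication $|M|=\emptyset$ and $F$ supported on rational curves $\Rightarrow\kappa(X)=0$: you do not carry it out, and the mechanism you propose cannot close it. The numerical data you invoke --- $P$ nef, $P\cdot N_j=0$, and adjunction $K_X\cdot C+C^2=-2$ on rational components --- do not force $P=0$: a cycle of rational $(-2)$-curves (an $I_k$-type configuration) is a nonzero nef divisor supported on rational curves which satisfies every one of these constraints, and indeed on a minimal properly elliptic surface with $p_g=1$ and a multiple fiber of type $mI_k$ the canonical divisor itself is effective, non-moving, supported on rational curves and nef. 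So no purely numerical bookkeeping with adjunction on the configuration can rule out a nontrivial nef part; any proof of this direction must use something more (the covering structure, or the classification). The paper settles this direction differently: it invokes Miranda's characterization of properly elliptic surfaces (the canonical divisor is supported on elliptic curves) and the Enriques--Kodaira classification to conclude that with no moving part and only rational support the minimal model must have trivial canonical bundle, i.e.\ it excludes $\kappa=1,2$ rather than computing a Zariski decomposition.

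A secondary slip occurs in your $\kappa(X)=2$ case: you claim that a general member $M$ of genus $\geq 2$ must have $M^2>0$ because otherwise ``adjunction would force $p_a(M)\leq 1$''. But $2p_a(M)-2=2M^2+M\cdot F$, so $M^2=0$ is compatible with any $p_a(M)\geq 1$ as soon as $M\cdot F>0$ (think of a genus-$2$ fibration on a properly elliptic surface such as a product of an elliptic curve with a genus-$2$ curve). Hence bigness of $K_X$ does not follow this way; the paper avoids the issue by excluding $\kappa(X)=1$ through the same cited characterization of properly elliptic surfaces, which is the ingredient missing from your argument here as well.
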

	\begin{proof} Suppose, first, that $f$ is \'etale of degree d, then $\kappa(X)=d\cdot \kappa(Y)=0$, hence we can assume that $f$ is ramified.

	Being $f$ a triple cover of a surface $Y$ with trivial canonical bundle by \eqref{eq_h0k} we have $h^0(\oo(K_X))\geq 1$ hence the Kodaira dimension of $X$ satisfies $\kappa(X) \geq 0$. Moreover, if $\kappa(X)=0$ then $X$ cannot be neither an Enriques surface nor a bielliptic one.

		By the Hurwitz  formula \eqref{canonicalformula} the canonical divisor is
		$K_X =R$. If the divisor $R$ is supported at least on a moving positive genus curve then  
		equation \eqref{canonicalformula} implies that $K_X$ is non-trivial, thus $\kappa(X)>0$. Moreover,  $X$ is a properly elliptic surface if and only its canonical bundle is supported only on elliptic curves (see \cite[Section III]{M89}), thus, again by \eqref{canonicalformula}, we have the second case of the Proposition. While, if the general member of $|R|$, is supported on a curve of genus $g\geq 2$ then  $X$ is of general type.
		
		Finally, if $R$ does not move in a linear system and it is supported only on rational curves, then the minimal model of $X$ must have trivial canonical bundle and so we get the last case of (1).
		
		Conversely, if $|M|=\emptyset$, $\kappa(X)$ can not be 1 or 2, hence $\kappa(X)=0$.
		If $|M| \neq \emptyset$  and the general member of $|M|$ is supported on curves of genus $g \geq 2$, $\kappa(X)$ can not be 0 or 1, hence it is 2.
		
		If  $|M| \neq \emptyset$  and the general member of $|M|$ is supported on elliptic curves, by adjunction $M\cdot M=0$ and so $X$ would have a genus 1 fibration, which is impossible for surfaces of general type. Since $|M| \neq \emptyset$, $\kappa(X)\neq 0$.
		
	\end{proof}
Similar results work in case $X$ is normal, but not necessarily Gorenstein. The main problem here is that the canonical divisor is not Cartier, so we have consider the canonical resolution of $X$.
\begin{prop}\label{prop_kodairaDim} The results of Proposition \ref{prop_kodairaDim1} hold even if $X$ is normal (not necessarily Gorenstein).\end{prop}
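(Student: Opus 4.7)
The plan is to reduce to Proposition \ref{prop_kodairaDim1} by passing to the canonical resolution $\widetilde{\sigma}\colon\widetilde{X}\to X$ provided by Proposition \ref{prop.can.ris}. Since the Kodaira dimension of a normal surface is defined as that of any smooth resolution, we have $\kappa(X) = \kappa(\widetilde{X})$. In the diagram \eqref{dia.can}, $\widetilde{X}$ is smooth by Proposition \ref{prop.sing.ram}, $\widetilde{f}\colon \widetilde{X}\to \widetilde{Y}$ is a triple cover with smooth branch locus, and $\sigma\colon \widetilde{Y}\to Y$ is a sequence of blow-ups at points. Because $K_Y=0$, the canonical divisor of $\widetilde{Y}$ is $K_{\widetilde{Y}} = E_\sigma$, an effective $\mathbb{Z}$-combination of rational $\sigma$-exceptional curves.

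Hurwitz applied to the smooth triple cover $\widetilde{f}$ gives
\[
K_{\widetilde{X}} = \widetilde{f}^{*}(E_\sigma) + \widetilde{R},
\]
where $\widetilde{R}$ is the ramification divisor of $\widetilde{f}$. Since $E_\sigma$ is rigid on $\widetilde{Y}$ (being $\sigma$-exceptional), so is $\widetilde{f}^{*}(E_\sigma)$ on $\widetilde{X}$, and it lies in the fixed part of $|K_{\widetilde{X}}|$. Consequently, the movable part of $|K_{\widetilde{X}}|$ coincides with the movable part $|\widetilde{M}|$ of $|\widetilde{R}|$. Moreover, $\widetilde{R}$ is the proper transform of $R$ modified by a (possibly zero) fixed contribution supported on components of $\widetilde{f}^{-1}(E_\sigma)$, so $|\widetilde{M}|$ coincides with the proper transform of the movable part $|M|$ of $|R|$; in particular the general members of $|M|$ and $|\widetilde{M}|$ have the same geometric genus and the fixed parts $F$ and $\widetilde{F}$ have the same positive-genus components.

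With these identifications, the three-case argument of Proposition \ref{prop_kodairaDim1}, whose core input is only the Hurwitz relation, the decomposition $|R|=|M|+F$ and the genus of the general member of $|M|$, applies verbatim on the smooth surface $\widetilde{X}$ and, after translating back through $\widetilde{\sigma}$, yields the stated trichotomy in terms of $|M|$ and $F$ on $X$. The initial inequality $\kappa(X)\geq 0$ and the exclusion of Enriques and bielliptic minimal models follow as in Proposition \ref{prop_kodairaDim1} from \eqref{eq_h0k}, which holds for any normal triple cover.

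The main technical subtlety is showing that the additional term $\widetilde{f}^{*}(E_\sigma)$ appearing in $K_{\widetilde{X}}$, whose irreducible components could a priori have positive geometric genus (depending on the ramification of $\widetilde{f}$ along the exceptional locus), cannot contribute to the movable part of $|K_{\widetilde{X}}|$ and does not alter the classification; this is precisely where the rigidity of $E_\sigma$ on $\widetilde{Y}$, together with the fact that $\widetilde{f}$ is finite, is used.
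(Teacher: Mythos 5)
Your route is the same as the paper's: pass to the canonical resolution of Proposition \ref{prop.can.ris}, use $\kappa(X)=\kappa(\widetilde{X})$, write $K_{\widetilde{X}}=\widetilde{f}^*(E_\sigma)+R_{\widetilde{f}}$ by Hurwitz, show the exceptional term contributes nothing movable, identify the movable part with the strict transform of $|M|$, and then run the case analysis of Proposition \ref{prop_kodairaDim1} on $\widetilde{f}$, with \eqref{eq_h0k} giving $\kappa(X)\geq 0$. However, the one step you yourself flag as the crux is only asserted, and the justification you offer for it does not work: rigidity of $E_\sigma$ on $\widetilde{Y}$ plus finiteness of $\widetilde{f}$ does not imply rigidity of $\widetilde{f}^*(E_\sigma)$. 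Indeed, by \eqref{eq_OOE} applied to $\widetilde{f}$ and the projection formula, $h^0\big(\widetilde{X},\widetilde{f}^*\oo_{\widetilde{Y}}(E_\sigma)\big)=h^0\big(\widetilde{Y},\oo(E_\sigma)\big)+h^0\big(\widetilde{Y},\oo(E_\sigma)\otimes\widetilde{\EE}\big)$, where $\widetilde{\EE}$ is the Tschirnhausen bundle of $\widetilde{f}$, and nothing in ``rigid plus finite'' rules out sections coming from the second summand; in general, pullback of a rigid effective divisor under a finite cover can acquire sections.

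What closes this gap (and is the paper's argument) is not rigidity of $E_\sigma$ but the fact that, by commutativity of \eqref{dia.can}, the whole of $\widetilde{f}^*(E_\sigma)$ is contracted by $\widetilde{\sigma}$ and has negative self-intersection. If $|\widetilde{f}^*(E_\sigma)|$ had a movable part, some member $\widetilde{D}$ would not be $\widetilde{\sigma}$-exceptional; denoting by $D$ its image on $X$, one gets $\widetilde{\sigma}^*(D)\sim\widetilde{D}+k\,\widetilde{f}^*(E_\sigma)\sim(k+1)\widetilde{f}^*(E_\sigma)$ for some $k\geq 0$, which contradicts $\widetilde{\sigma}^*(D)\cdot\widetilde{f}^*(E_\sigma)=0$ since $\big(\widetilde{f}^*(E_\sigma)\big)^2<0$; equivalently, an effective divisor supported on the $\widetilde{\sigma}$-exceptional locus cannot move because the intersection form there is negative definite. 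With this substitution for your ``rigidity plus finiteness'' claim, the rest of your reduction (movable part of $R_{\widetilde{f}}$ contained in $\widetilde{M}$, $|M|\cong|\widetilde{M}|$ because $\sigma$ is a composition of blow-ups, then Proposition \ref{prop_kodairaDim1}) coincides with the paper's proof.
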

\begin{proof}
If $X$ is normal we consider the canonical resolution $\widetilde{X}$, as in Proposition \ref{prop.can.ris}, since $\kappa(X)=\kappa(\widetilde{X})$.
The map $\sigma:\widetilde{Y}\ra Y$ is a blow up and introduces an exceptional divisor $E$, which does not move in a linear system. The canonical bundle of $\widetilde{Y}$ is $K_{\widetilde{Y}}=K_Y+E=E$. We apply the Hurwitz ramification formula to the triple cover $\widetilde{f}\colon \widetilde{X}\ra\widetilde{Y}$:
$$K_{\widetilde{X}}=\widetilde{f}^*E+R_{\widetilde{f}},$$
where $R_{\widetilde{f}}$ is the ramification divisor of $\widetilde{f}$.

Let analyse both the summands: $\widetilde{f}^*E$  has negative self intersection and is the exceptional locus of $\widetilde{\sigma}$  for the commutativity of the diagram. Hence its movable part is trivial, otherwise there would be at least one curve in  $|\widetilde{f}^*E|$ which is not contracted by $\widetilde{\sigma}$, say $\widetilde{D}$. Denoted by $D$ the image of this curve, $\sigma^*(D)\sim \widetilde{D}+k\widetilde{f}^*E \sim (k+1)\widetilde{f}^*E$, for a non negative $k$. But this contradicts $\sigma^*(D)\widetilde{f}^*E=0$.

We observe that $R_{\widetilde{f}}$ are curves contained in the linear system of $$\widetilde{\sigma}^*R=\widetilde{\sigma}^*(M+F)=\widetilde{M}+\widetilde{F}+k\widetilde{f}^*E,$$
where $\widetilde{M}$ and $\widetilde{F}$ are the strict transforms of $M$ and $F$ respectively, using the notation of Proposition \ref{prop_kodairaDim1}. Since $\widetilde{f}^*E$ has no movable part, the movable part of $R_{\widetilde{f}}$ is contained in $\widetilde{M}$. Since $\sigma$ is a blow up $|M|$ is isomorphic to $|\widetilde{M}|$.

To conclude it suffices to apply the Proposition \ref{prop_kodairaDim1} to the cover $\widetilde{f}$.

\end{proof}
	
	\begin{say}\label{say_tripleGalois} The situation becomes a little bit easier if we consider only the Galois case. Indeed, by \cite[Theorem 2.1]{Tan02} a triple cover is Galois if and only if it is totally ramified over its branch locus. Moreover, in this instance the branch locus is exactly $D'=D/2=B+C$ \cite[Theorem 1.3 (2)]{Tan02}. In this case $X$ is smooth if and only if $D$ is smooth \cite[Theorem 3.2]{Tan02}. A Galois triple cover $f:X\rightarrow Y$ (with $Y$ smooth) is first of all a cyclic $\ZZ/3\ZZ$-cover, hence it can be treated as a cyclic cover. Moreover, it is determined by two curves $B$ and $C$ in $Y$ and by two divisors
		divisors $L,$ $M$ on $Y$ such that $B\in |2L-M|$ and $C\in |2M-L|$.
		As already remarked the branch locus of $f$ is $B+C$ and $3L\equiv 2B+C,$ $3M\equiv B+2C$. Thus the class of the branch locus is $B+C=L+M$.  The surface $X$
		is normal if and only if  $B+C$ is reduced. Otherwise it is possible to consider the normalization, which is associated to another triple cover as explain in \cite[Proposition 7.5]{M85}. The singularities of $X$ lie over the singularities of $D'=B+C$, see Proposition \ref{prop.sing.ram}.
		
		If $B+C$ is smooth, we have
		\begin{equation}\label{eq1}
			\chi(\mathcal O_X)=3\chi(\mathcal O_Y)+\frac{1}{2}\left(L^2+K_YL\right)+\frac{1}{2}\left(M^2+K_YM\right),
		\end{equation}
		\begin{equation}\label{eq2}
			K_X^2=3K_Y^2+4\left(L^2+K_YL\right)+4\left(M^2+K_YM\right)-4LM,
		\end{equation}
		\begin{equation}\label{eq3}
			q(X)=q(Y)+h^1(Y,\mathcal O_Y(K_Y+L))+h^1(Y,\mathcal O_Y(K_Y+M)),
		\end{equation}
		\begin{equation}\label{eq4}
			p_g(X)=p_g(Y)+h^0(Y,\mathcal O_Y(K_Y+L))+h^0(Y,\mathcal O_Y(K_Y+M)).
		\end{equation}
	\end{say}
	Of course, one can apply Theorem \ref{teo.miranda} to the Galois case. For this, let $\xi$ be a primitive cube root of unity, generating $\ZZ/3\ZZ$ and we obtain: 
	
	\begin{prop}\emph{\cite[Proposition 7.1]{M85}, \cite[Theorem 1.3]{Tan02}}\label{prop_MirandaGalois}
		If $f\colon X \longrightarrow Y$ is a Galois triple cover, then:
		\begin{itemize}
			\item[$\boldsymbol{(i)}$] The sheaf $f_*\oo_X$ splits into eigenspaces as $\oo_Y \oplus \mathcal{L}^{-1} \oplus \mathcal{M}^{-1}$ where $\oo_Y$, $\mathcal{L}^{-1}$ and $\mathcal{M}^{-1}$ are the eigenspaces for $1$, $\xi$ and $\xi^2$, respectively.
			\item[$\boldsymbol{(ii)}$] The Tschirnhausen bundle $\EE$ for $f$ is the sum of eigenspaces $\mathcal{L}^{-1} \oplus \mathcal{M}^{-1}$.
			\item[$\boldsymbol{(iii)}$] The branch locus of $f$ is the divisor $D'$ such that $\mathcal{O}(D')=\mathcal{L} \otimes \mathcal{M}$. 
		\end{itemize}
	\end{prop}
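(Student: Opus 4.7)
The plan is to exploit the $\Z/3\Z$-action on $X$ to decompose $f_*\oo_X$ into character eigenspaces, then read off the Tschirnhausen bundle from Equation \eqref{eq_OOE} and identify the branch divisor via the $\oo_Y$-algebra structure on $f_*\oo_X$.

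For part (i), since $f$ is Galois with group $G=\Z/3\Z$ generated by $\xi$, the pushforward $f_*\oo_X$ carries an $\oo_Y$-linear $G$-action. In characteristic zero, a representation of a finite abelian group splits canonically into character sub-sheaves, so
$$f_*\oo_X \;=\; (f_*\oo_X)^{G} \;\oplus\; (f_*\oo_X)^{\xi} \;\oplus\; (f_*\oo_X)^{\xi^2}.$$
The invariant summand equals $\oo_Y$ because $Y$ is normal and $f$ realises $Y$ as the quotient $X/G$. Since $X$ is normal, $Y$ is smooth of the same dimension and $f$ is finite of degree $3$, the morphism is finite flat, so $f_*\oo_X$ is locally free of rank $3$; the two non-trivial character summands are then line bundles, which I label $\mathcal{L}^{-1}$ and $\mathcal{M}^{-1}$. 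Part (ii) is immediate: comparing this splitting with Equation \eqref{eq_OOE} forces $\EE=\mathcal{L}^{-1}\oplus \mathcal{M}^{-1}$.

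For part (iii) I would analyse the multiplication on $f_*\oo_X$. Since the characters multiply as $\xi\cdot\xi=\xi^{2}$ and $\xi^{2}\cdot\xi^{2}=\xi$, the ring structure restricts to $\oo_Y$-linear maps
$$\mathcal{L}^{-1}\otimes \mathcal{L}^{-1}\longrightarrow \mathcal{M}^{-1},\qquad \mathcal{M}^{-1}\otimes \mathcal{M}^{-1}\longrightarrow \mathcal{L}^{-1},$$
equivalently global sections $b\in H^0(Y,\mathcal{L}^{\otimes 2}\otimes\mathcal{M}^{-1})$ and $c\in H^0(Y,\mathcal{M}^{\otimes 2}\otimes\mathcal{L}^{-1})$ cutting out divisors $B\in|2L-M|$ and $C\in|2M-L|$, matching the description already recalled in \ref{say_tripleGalois}. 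Trivialising $\mathcal{L}$ and $\mathcal{M}$ on a small open set, the cover becomes the relative spectrum of $\oo_Y[u,v]/(u^{2}-bv,\,v^{2}-cu,\,uv-bc)$, and a direct Jacobian calculation shows that $f$ is étale exactly where $bc$ is invertible. Hence the branch divisor is $D'=B+C$, with class $(2L-M)+(2M-L)=L+M$, so $\oo_Y(D')=\mathcal{L}\otimes\mathcal{M}$.

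The main obstacle is the last step: one has to verify that the local model really gives the branch divisor with multiplicity one along each component (and that $b,c$ do not vanish identically, which would contradict the cover being irreducible and totally ramified). The cleanest route is to combine the explicit normal form above with Proposition \ref{prop.sing.ram}, which in the Galois (i.e.\ totally ramified) case identifies the branch locus with the divisor $D'$ where the Jacobian of the fibre algebra drops rank; alternatively one can use Hurwitz, $K_X=f^{*}K_Y+R$, to cross-check the class of $R$ against $f^{*}(B+C)$.
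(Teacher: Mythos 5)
Your argument is correct, but it is a genuinely different route from the paper's: the paper proves this proposition essentially by citation, taking $(i)$ and $(ii)$ from \cite[Proposition 7.1]{M85} and deducing $(iii)$ from Tan's results, namely that a triple cover is Galois if and only if it is totally ramified \cite[Theorem 2.1]{Tan02}, so that in the decomposition $D=2D'+D''$ of Proposition \ref{thm_Branch} one has $D''=0$ and the branch divisor is $D'=D/2$, whence $\oo_Y(D')=\bigwedge^2\EE^{\vee}=\mathcal{L}\otimes\mathcal{M}$. You instead reprove the cited facts in the cyclic case: the character decomposition of $f_*\oo_X$ under $\Z/3\Z$ (with the invariant part equal to $\oo_Y$ since $Y=X/G$), comparison with \eqref{eq_OOE} for $(ii)$, and the explicit algebra structure $u^2=bv$, $v^2=cu$, $uv=bc$ to locate the branch locus at $\{bc=0\}=B+C\in|L+M|$. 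What your approach buys is a self-contained proof that simultaneously recovers the Galois triple cover data $(b,c,\mathcal{L},\mathcal{M})$ of \ref{say_GTD} and shows directly that every branch point is a point of total ramification (your fibre computation at $b\neq 0$, $c=0$ gives $\C[u]/(u^3)$); note also that your side remark is the right one: if $b$ or $c$ vanished identically the algebra would contain a nilpotent ideal, contradicting the normality (reducedness) of $X$. What the paper's route buys is brevity and a clean treatment of the divisor-theoretic bookkeeping: Tan's structure theorem handles the multiplicity questions you flag at the end (that the branch divisor is $2D'$ with $D'$ appearing with multiplicity one in the reduced branch locus) without any local normal-form analysis, and the local freeness of $f_*\oo_X$ that you re-derive by miracle flatness is already built into the paper's definition of a triple cover through \eqref{eq_OOE}. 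Your suggested cross-checks via Proposition \ref{prop.sing.ram} or the Hurwitz formula are sound but not needed once one quotes \cite[Theorem 1.3]{Tan02} as the paper does.
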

	\proof $(i)$ and $(ii)$ are contained in \cite[Proposition 7.1]{M85}. Recall that, by \cite[Theorem 2.1]{Tan02},  a triple coverer is Galois if and only if it is totally ramified over its branch locus. Moreover, in this case the branch locus is exactly $D'=D/2=B+C$ \cite[Theorem 1.3 (2)]{Tan02}.\endproof
	\begin{say}\label{say: Galois enigespance}
      Being Galois triple cover cyclic, one can compare the previous result with the standard theory of cyclic covers, see e.g. \cite[Chapter I.17]{BHPV}. Both the line bundles $\mathcal{M}$ of Proposition \ref{prop_MirandaGalois} and $\mathcal{L}^2$ correspond to the same eigenspace of $f_*\mathcal{O}_X$, the one relative to the eigenvalue $\xi^2$. Nevertheless the can differ in the Picard group by 3-torsion element and integer multiple of divisors supported on the codimension 1 subvarieties in the branch locus, see also \cite[Section 1.4]{Tan02}. The similar situation must occur for $\mathcal{M}^2$ and $\mathcal{L}$.
      
      Viceversa, given a  Tschirnhausen bundle $\EE$ and a section $\eta\in H^0(Y,S^3\EE^{\vee}\otimes \bigwedge^2\EE)$ it determines a triple cover and to see if it is Galois one has to check that $\mathcal{O}_Y\oplus\EE$ is a representation of the group $\Z/3\Z$. In particular $\EE$ must split according to the two characters $\xi$ and $\xi^2$. Therefore $\EE=\mathcal{L}^{-1}\oplus\mathcal{M}^{-1}$ where $\mathcal{L}^2$ differs from $\mathcal{M}$  by 3-torsion element and integer multiple of divisors supported on the codimension 1 subvarieties in the branch locus and 
	 $\mathcal{M}^2$ differs from $\mathcal{L}$ by 3-torsion element and integer multiple of divisors supported on the codimension 1 subvarieties in the branch locus. 
	 The choice of $\eta$ determines uniquely the cover.
\end{say}
	\begin{say}\label{say_GTD} Finally we call a \emph{Galois triple cover data} over $Y$  a pair of line bundles $\mathcal{L}$ and $\mathcal{M}$ on $Y$ and two sections
		\[
		b \in H^0(\mathcal{L}^2 \otimes \mathcal{M}^{-1}), \quad c \in H^0(\mathcal{L}^{-1} \otimes \mathcal{M}^2).
		\]
		To give a Galois triple cover data we will use alternatively the quadruple $(b,c,\mathcal{L},\mathcal{M})$  or $(B,C,\mathcal{L},\mathcal{M})$ or even $(B,C,L,M)$ with their clear meaning, i.e., the different cases of the letter give a different incarnation of the objects treated, once being a section, once a divisor and once a sheaf.
	\end{say}

	\begin{say} We assume that $X$ is normal and denote by $X'$ the \emph{minimal resolution} of the singularities of
		$X$. We observe that in general $X'$ coincides neither with the minimal model $X^{\circ}$ nor with the canonical resolution $\widetilde{X}$. One cannot apply neither the formulae of Proposition \ref{prop.invariants} nor the  \eqref{eq1},\eqref{eq2},\eqref{eq3} and \eqref{eq4} if $X$ is singular and in particular for the last ones if the branch locus is singular. We would like to define a class of singularities of $f:X\ra Y$  such that the formulae of Proposition \ref{prop.invariants} give the invariants of $X'$(instead of the one of $X$). 
		
		We follow \cite[Definition 1.5]{PP13}.
	\end{say}
	
	\begin{definition} \label{def.neg.sing}
		Let $f\colon X \rightarrow Y$ be a triple cover of a smooth
		algebraic surface $Y$, with Tschirnhausen bundle $\EE$. We say that
		$X$ has only \emph{negligible} $($or \emph{non essential}$)$
		singularities if the invariants of the minimal resolution $X'$ are
		given by the formulae in Proposition \emph{\ref{prop.invariants}}.
		We also call \emph{negligible} singularities the corresponding singularities of the branch locus.
	\end{definition}
	
	\begin{proposition}\emph{\cite[Examples 1.6 and 1.8]{PP13}}\label{prop: negligible} 
		Let  $f\colon X \rightarrow Y$ be a triple cover of a smooth
		algebraic surface $Y$, with Tschirnhausen bundle $\EE$. If the singularities of $X$ are only of type $\frac{1}{3}(1,1)$ and $\frac{1}{3}(1,2)$, then $X$ has only negligible singularities.  
	\end{proposition}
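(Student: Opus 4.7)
The plan is to compare the invariants of the minimal resolution $X'$ with the values produced by the formulae of Proposition \ref{prop.invariants} applied to the pair $(Y,\EE)$, and to show that the two coincide when the only singularities of $X$ are of type $\frac{1}{3}(1,1)$ or $\frac{1}{3}(1,2)$.

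Formulae $(i)$ and $(ii)$ transfer essentially for free. Since $f$ is finite, $R^if_*\oo_X=0$ for $i>0$, so the Leray spectral sequence yields $h^i(X,\oo_X)=h^i(Y,\oo_Y)+h^i(Y,\EE)$ with no smoothness assumption on $X$; in particular $\chi(\oo_X)$ is computed by Riemann--Roch on the smooth surface $Y$ applied to $\oo_Y\oplus \EE$, which is exactly $(ii)$. Both quotient singularities in the statement are rational (in fact $\frac{1}{3}(1,2)$ is the Du Val $A_2$ singularity), so $R^i\pi_*\oo_{X'}=0$ for $i>0$ for the minimal resolution $\pi\colon X'\to X$ and $\chi(\oo_{X'})=\chi(\oo_X)$. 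Hence $(i)$ and $(ii)$ hold verbatim for $X'$.

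For $(iii)$ and $(iv)$ the natural tool is the canonical resolution $\widetilde{f}\colon\widetilde{X}\to\widetilde{Y}$ of Proposition \ref{prop.can.ris}, where $\widetilde{X}$ is smooth and Proposition \ref{prop.invariants} applies with the pair $(K_{\widetilde{Y}},\widetilde{\EE})$. There is a further birational morphism $\widetilde{X}\to X'$ obtained by contracting the $(-1)$-curves introduced by the branch-locus desingularization $\sigma\colon\widetilde{Y}\to Y$; each such contraction contributes $+1$ to $K^2$ and $-1$ to $e$. One then resolves each singularity locally -- using that the Hirzebruch--Jung strings of $\frac{1}{3}(1,1)$ and $\frac{1}{3}(1,2)$ are $[3]$ and $[2,2]$ respectively -- and tracks how the Chern numbers $c_1^2(\widetilde{\EE})$, $c_1(\widetilde{\EE})K_{\widetilde{Y}}$, $c_2(\widetilde{\EE})$ and $K_{\widetilde{Y}}^2$ differ from the analogous quantities on $Y$. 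The target claim is that these corrections exactly balance the contributions of the contracted $(-1)$-curves, so that the formulae of Proposition \ref{prop.invariants} evaluated on $(Y,\EE)$ reproduce the invariants of $X'$.

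The main obstacle is the bookkeeping for the Tschirnhausen bundle under the chain of blowups: $\widetilde{\EE}$ is not simply $\sigma^*\EE$, but differs from it by twists involving the exceptional divisor whose coefficients encode the multiplicities of the branches of $D=D'\cup D''$ at each singular point. One has to identify, for the local analytic models of the branch locus that produce a $\frac{1}{3}(1,1)$ or a $\frac{1}{3}(1,2)$ on $X$, the precise multiplicity data of $D'$ and $D''$, and verify that the resulting Chern-class corrections for $\widetilde{\EE}$ cancel exactly the $K^2$ and $e$ corrections coming from the $(-1)$-curves in $\widetilde{X}$. This local calculation, performed uniformly in the degree of the cover, is the content of \cite[Examples 1.6 and 1.8]{PP13}, from which the proposition follows.
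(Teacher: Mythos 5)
Your proposal is correct and takes essentially the same route as the paper: the paper states this proposition without proof, attributing it to \cite[Examples 1.6 and 1.8]{PP13}, and your argument likewise delegates the decisive step (the cancellation of the Chern-class corrections for the Tschirnhausen bundle under the branch-locus blowups against the contracted $(-1)$-curves) to that same reference. Your preliminary observations -- the Leray/rational-singularity argument giving $(i)$ and $(ii)$ directly for $X'$, and the canonical-resolution bookkeeping for $(iii)$ and $(iv)$ -- are consistent with the local analyses the paper itself carries out in \ref{say: singu type 1} and \ref{say: sing type 2}.
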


	\section{Triple covers of K3 surfaces: the Galois Case}\label{sec: the Galois case}
	
	From this section onwards, we shall always  consider as the base of a triple cover a K3 surface $S$. Unless otherwise stated $f:X\ra S$ is a triple cover such that $X$ is a normal connected surface.
	
	\subsection{Galois triple cover of K3 surface}
	Let $S$ be a K3 surface and $f:X\ra S$ be a Galois triple cover of $S$ with branch locus $\coprod_{i=1}^n D_i$ where $D_i$ are (possibly singular and reducible) curves and thus the $D_i$'s are the $n$ connected components of the branch locus. Requiring that $X$ is normal implies that the $D_i$'s are reduced. Up to reordering the components, we can always assume that $D_1^2\geq D_i^2$ for every $i=1,\ldots, n$.
	Since $B+C$ is the branch locus (with the same notation of Paragraph \ref{say_tripleGalois}), there exist curves $B_i$ and $C_i$ (not necessarily connected) such that $D_i=B_i+C_i$ and $B=\sum_i B_i$, $C=\sum_iC_i$.
	
	\begin{lemma}\label{lemma: intersections} Let $(B, C, L,M)$ be the Galois triple cover data of a Galois triple cover $X\ra S$. Let $B=\sum_{i=1}^nB_i$, $C=\sum_{i=1}^nC_i$ and $D_i=B_i+C_i$. Then:\begin{itemize}\item $B_iB_j=C_iC_j=B_iC_j=0$, if $i\neq j$; \item $B_i^2\equiv B_iC_i\equiv C_i^2\equiv_3 D_i^2$
	\end{itemize}\end{lemma}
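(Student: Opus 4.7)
The plan is to handle the two bullets in sequence, using the first one to simplify the intersection computations that prove the second.

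\textbf{First bullet (pairwise disjointness).} The $D_i$ are by hypothesis the connected components of the branch locus $B+C$. In particular, for $i\neq j$ the supports of $D_i$ and $D_j$ are disjoint closed subsets of $S$. Since $B_i,C_i$ are curves supported on $D_i$, and $B_j,C_j$ on $D_j$, each pair among $B_i\cap B_j$, $B_i\cap C_j$, $C_i\cap C_j$ is contained in $D_i\cap D_j=\emptyset$. Proper intersection with empty set-theoretic intersection gives intersection number zero, so $B_iB_j=B_iC_j=C_iC_j=0$ whenever $i\neq j$.

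\textbf{Second bullet (congruences mod $3$).} Recall from Paragraph \ref{say_tripleGalois} the linear equivalences
\begin{equation*}
3L\equiv 2B+C,\qquad 3M\equiv B+2C.
\end{equation*}
I would intersect these with $B_i$ and $C_i$ and use the first bullet. Since $B\cdot B_i=\sum_j B_j\cdot B_i=B_i^2$ and $C\cdot B_i=C_i\cdot B_i$ (all other summands vanish by the first bullet), intersecting $3L\equiv 2B+C$ with $B_i$ gives
\begin{equation*}
3\,(L\cdot B_i)=2B_i^2+B_iC_i,
\end{equation*}
so $2B_i^2+B_iC_i\equiv 0\pmod 3$, which, multiplying by $2$, yields $B_i^2\equiv B_iC_i\pmod 3$. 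Similarly, intersecting $3M\equiv B+2C$ with $C_i$ gives $B_iC_i+2C_i^2\equiv 0\pmod 3$, hence $C_i^2\equiv B_iC_i\pmod 3$. Combining, $B_i^2\equiv B_iC_i\equiv C_i^2\pmod 3$, and then
\begin{equation*}
D_i^2=B_i^2+2B_iC_i+C_i^2\equiv 4B_iC_i\equiv B_iC_i\pmod 3,
\end{equation*}
which completes the chain of congruences.

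\textbf{Main obstacle.} There is essentially no serious obstacle; the only point requiring a bit of care is making sure that the decomposition $D_i=B_i+C_i$ really respects the splitting of $B$ and $C$, i.e.\ that no irreducible component of $B$ lies in a different connected component of $B+C$ than it is assigned to. This is automatic because each irreducible component of the branch locus belongs to a unique connected component $D_i$, and the division into $B$- and $C$-parts is preserved by restriction to any connected component. Once this bookkeeping is clear, both bullets are immediate consequences of connectedness and of the relations $3L\equiv 2B+C$, $3M\equiv B+2C$.
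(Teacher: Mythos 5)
Your proposal is correct and follows essentially the same route as the paper: disjointness of the connected components $D_i$ gives the vanishing of the cross intersections, and the congruences come from intersecting the relations $3L\equiv 2B+C$, $3M\equiv B+2C$ with $B_i$ and $C_i$ (equivalently, from $B_iL, C_iL, B_iM, C_iM\in\Z$), exactly as in the paper's argument. The only cosmetic difference is that the paper deduces $C_i^2\equiv B_iC_i$ from $C_iL\in\Z$ rather than from $C_iM\in\Z$, which changes nothing.
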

	\proof Since $B_i$ (resp. $C_i$) are contained in a connected component of the branch locus and $B_j$ (resp. $C_j$) in another one, we have $B_iB_j=C_iC_j=B_iC_j=0$, if $i\neq j$. Moreover, 
	the last part of the statement follows directly by the conditions (not all independent): $B_iL=B_i(2B+C)/3=(2B_i^2+B_iC_i)/3\in \Z$. Analogously $C_iL=C_i(2B+C)/3\in \Z$, $L^2\in2\Z$,   $B_iM=B_i(B+2C)/3\in \Z$, $C_iM=C_i(B+2C)/3\in \Z$, $M^2\in2\Z$.\endproof
	
	\begin{proposition}\label{prop: smooth Galois triple cover, branch locus} If $f:X\ra S$ is a smooth Galois triple cover, then the branch locus is smooth. In particular:  
		\begin{itemize}
			\item all the $D_i's$ are irreducible smooth curves with positive genus; \item  if moreover $D_1^2>0$, then $n=1$ (i.e. the branch locus is $D_1$) and there exists a divisor $H\in Pic(S)$ such that $D=3H$;
			\item if at least one $D_i$ has genus 1, i.e. $D_i^2=0$, then all the $D_i$'s are smooth curves of genus 1. In particular  $\varphi_{|D_1|}:S\ra \mathbb{P}^1$ is an elliptic fibration, all the $D_i$'s are smooth fibers and $f:X\ra S$ is obtained by a base change of order 3 branched on the $n$ smooth fibers $D_i$'s.
		\end{itemize}
	\end{proposition}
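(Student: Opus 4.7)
My plan is to deduce each claim from the Galois triple cover data $(B,C,L,M)$ recalled in Paragraph \ref{say_tripleGalois}, together with the integrality argument used in Lemma \ref{lemma: intersections} and a Hodge-index analysis on $\NS(S)$.

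First I establish smoothness, irreducibility and positive genus of each $D_i$. Since $f$ is Galois it is totally ramified, so in the notation of Proposition \ref{thm_Branch} we have $D''=0$ and the branch locus coincides with $D'=B+C$; by \cite[Theorem 3.2]{Tan02}, smoothness of $X$ forces $B+C$ to be smooth, hence every connected component $D_i$ is a smooth connected, and thus irreducible, curve. Because the branch divisor is reduced, an irreducible $D_i$ lies entirely inside $B$ or inside $C$; after relabelling, say $D_i=B_i$ and $C_i=0$. Then $B_iL\in\Z$ (both factors are Cartier), and using $B_iB_j=B_iC_j=0$ for $j\neq i$ and $C_i=0$ this intersection simplifies to $2B_i^2/3$, forcing $D_i^2=B_i^2\equiv 0\pmod 3$. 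A smooth rational curve on $S$ would give $D_i^2=-2\not\equiv 0\pmod 3$, so $g(D_i)\geq 1$.

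Next I treat the case $D_1^2>0$. Then $D_1$ is a smooth irreducible curve in the positive cone of $\NS(S)$. For $j\neq 1$, distinct connected components of the branch locus are disjoint, so $D_1\cdot D_j=0$; by the Hodge index theorem the orthogonal complement of $D_1$ in $\NS(S)$ is negative definite, hence any nonzero class in it has strictly negative self-intersection. This contradicts $D_j^2=2g(D_j)-2\geq 0$ from the previous step, forcing $n=1$ and identifying the entire branch locus with $D_1$. By irreducibility $D_1$ sits inside $B$ or $C$; taking $D_1=B$, $C=0$, the relation $3M\equiv B+2C$ from Paragraph \ref{say_tripleGalois} becomes $D_1=3M$, so $D=D_1=3H$ with $H=M\in\Pic(S)$.

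Finally, if $D_i^2=0$ for some $i$, the previous step shows $D_i$ is a smooth irreducible genus-$1$ curve, so by Saint-Donat's classical theorem for K3 surfaces $|D_i|$ is a base-point-free pencil and $\varphi_{|D_i|}\colon S\ra\PP^1$ is an elliptic fibration whose smooth fibres are numerically equivalent to $D_i$. For $j\neq i$, $D_i\cdot D_j=0$ places $D_j$ inside a fibre of $\varphi_{|D_i|}$; being smooth, irreducible and with $D_j^2\geq 0$, it cannot be a proper component of a reducible fibre (whose components all have self-intersection $-2$), so $D_j$ is itself a smooth fibre and $g(D_j)=1$. Each $D_j$ is then of the form $\varphi_{|D_i|}^{*}(p_j)$, and a direct intersection computation gives $L\cdot D_i=M\cdot D_i=0$, from which the base-change description follows. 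The step I expect to be the most delicate is exactly this last descent: one has to verify that $\mathcal L$ and $\mathcal M$ are actually pulled back from $\PP^1$, which reduces to analysing their restrictions to a general fibre of the elliptic fibration and comparing them with the $3$-divisibility of $2B+C$ and $B+2C$ coming from the Galois triple cover data.
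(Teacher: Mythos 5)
Your proposal is correct and follows essentially the same route as the paper: smoothness of the branch locus via total ramification, the mod-$3$ congruence on $D_i^2$ (the content of Lemma \ref{lemma: intersections}) to exclude rational components, Hodge index to force $n=1$ when $D_1^2>0$ with $3$-divisibility of $D$, and the elliptic-fibration/base-change argument when $D_i^2=0$. The final descent step you flag as delicate is treated just as briefly in the paper (via the induced genus $1$ fibration $X\ra C$ and the $3:1$ map $C\ra\mathbb{P}^1$), so nothing is missing relative to the paper's own proof.
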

	\proof The triple cover $f:X\ra S$ is smooth if and only if the branch locus is smooth (see Paragraph \ref{say_tripleGalois}), thus each $D_i$ is a smooth irreducible curve. In particular either $D_i=B_i$ or $D_i=C_i$. In both the cases one obtains $D_i^2\equiv_3 0$, by Lemma \ref{lemma: intersections}. If $D_i$ is a smooth irreducible rational curve, $D_i^2=-2\not\equiv_3 0$ which is not admissible. So for every $D_i$ we have $g(D_i)\geq 0$. If $D_1^2>0$, then, by Hodge index theorem $D_i^2<0$ for each $i>1$, which implies that $D_i$ are rational curves for each $i>1$, contradicting the first assertion. Therefore, if $D_1^2>0$, there are no other components in the branch locus. This implies that $B=D_1$ and $C=0$ (or vice versa). So $L=B/3\in Pic(S)$, i.e. there exists a divisor $H$ such that $3H\equiv D$.
	
	If $D_1^2=0$ and it is a smooth irreducible curve, then it is a genus 1 curve on $S$ and $\varphi_{|D_1|}:S\ra\mathbb{P}^1$ is a genus 1 fibration. In particular, any $D_j$ orthogonal to $D_i$ is contained in a fiber of $\varphi_{|D_1|}$ and thus it can be either a rational curve (component of a reducible fiber) or a genus 1 curve. Since there are no rational curves contained in the branch locus, we conclude that all the $D_i$'s are smooth fibers of the same fibration and that the triple cover $X\ra S$ is branched over smooth fibers of the fibration $S\ra \mathbb{P}^1$. This induces a genus 1 fibration $X\ra C$, where $C$ is a smooth curve, such that that there is a $3:1$ map $g:C\ra \mathbb{P}^1$ and $X\simeq S\times_g\mathbb{P}^1$. \endproof

	Let us now consider the case of a singular branch locus: in order to construct $X$ we first consider its  canonical desingularization $\widetilde{X}$. So we blow up $S$ in such a way that the strict transform of the branch locus becomes smooth and then we take the triple cover (see Proposition \ref{prop.can.ris}). The surface $X$ is a contraction of $\widetilde{X}$. This gives information both on the singularities of $X$ and on the construction of a smooth model of it. So now we perform a local analysis near the singularities of the branch locus. To simplify the treatment we work locally around a singular point $P$ and we assume it is the unique singularity of the branch locus. 
	
	\medskip
	
	\begin{say}{\it Singularities of the branch locus of type 1.}\label{say: singu type 1}
		
		Let $f:X\ra S$ be a triple cover. Let $W$ and $V$ two curves on $S$, meeting transversally in the point $P$. Let $W$ and $V$ be contained in the branch locus, and assume that $P$ is an ordinary double point of the branch. Let us assume that locally the equation of the branch locus of the triple cover near to $P=(0,0)$ is $xy$. We blow up $P$ obtaining $\beta_1:S_1\ra S$ which introduces an exceptional divisor $E_P$. We denote by $W_1$ and $V_1$ the strict transforms of $W$ and $V$ with respect to $\beta_1$. The divisor $E_P$ appears with multiplicity 2, so it is still contained in the branch locus of the triple cover and it intersects $W_1$ and $V_1$ in two points $R$ and $Q$. We further blow up $R$ and $Q$, obtaining the map $\beta_2:S_2\ra S_1$, the exceptional divisors $E_R$ and $E_Q$ and the strict transforms $W_2$, $V_2$ and $\widetilde{E_P}$ of $W_1$, $V_1$ and $E_P$ respectively. Denoted by $w=W^2$ and $v=V^2$, the intersection properties on $S_2$ are the following $E_R^2=E_Q^2=-1$, $W_2^2= w-2$, $V_2^2=v-2$, $\widetilde{E_P}^2=-3$, $E_RW_2=E_QV_2=E_R\widetilde{E_P}=E_R\widetilde{E_P}=1$, the other intersections are trivial. The triple cover of $f_2:X_2\ra S_2$, induced by the one of $S$, is branched on $W_2$, $V_2$ and $\widetilde{E_P}$ so the branch locus is smooth and $X_2$ is the canonical resolution of $f:X\ra S$. The self intersections of the inverse image of some curves on $S_2$ are the following: $$\begin{array} {ll}\left(f_2^{-1}(E_R)\right)^2=\left(f_2^{-1}(E_Q)\right)^2=-3,& \left(f_2^{-1}(W_2)\right)^2=(W^2-2)/3,\\ \left(f_2^{-1}(V_2)\right)^2=(V^2-2)/3,& \left(f_2^{-1}(\widetilde{E_P})\right)^2=-1\end{array}.$$

		To get $X$ we contract $f_2^{-1}(E_R)$, $f_2^{-1}(E_Q)$, $f_2^{-1}(\widetilde{E_p})$.  Since $(f_2^{-1}(\widetilde{E_P}))^2=-1$, the contraction $\gamma_2:X_2\ra X_1$ gives the minimal resolution $X'=X_1$ of $X$. Moreover, $X_2$ coincides with the canonical resolution $\widetilde{X}$. Then one contracts the curves $\gamma_2(f_2^{-1}(E_R))$ and $\gamma_2(f_2^{-1}(E_Q))$  which are $-2$ curves meeting in a point (i.e.\ a Hirzebruch-Jung string of type $\frac{1}{3}(1,2)$). The contraction $\gamma_1:X_1\ra X$ of these two curves produces the surface $X$ (triple cover of $S$). The image of these two curves under $\gamma_1$ is the point $f^{-1}(P)$, and thus $f^{-1}(P)$ is a singular point on $X$ of type $A_2$, i.e. of type $\frac{1}{3}(1,2)$.  We have the following diagram  (see also \verb|Figure 1|)
		$$\xymatrix{X\ar[d]_f&X_1\ar[l]^{\gamma_1}&X_2\ar[l]^{\gamma_2}\ar[d]_{f_2}\\ S&S_1\ar[l]^{\beta_1}&S_2\ar[l]^{\beta_2}}.$$

		\begin{center}
			\includegraphics[width=8cm]{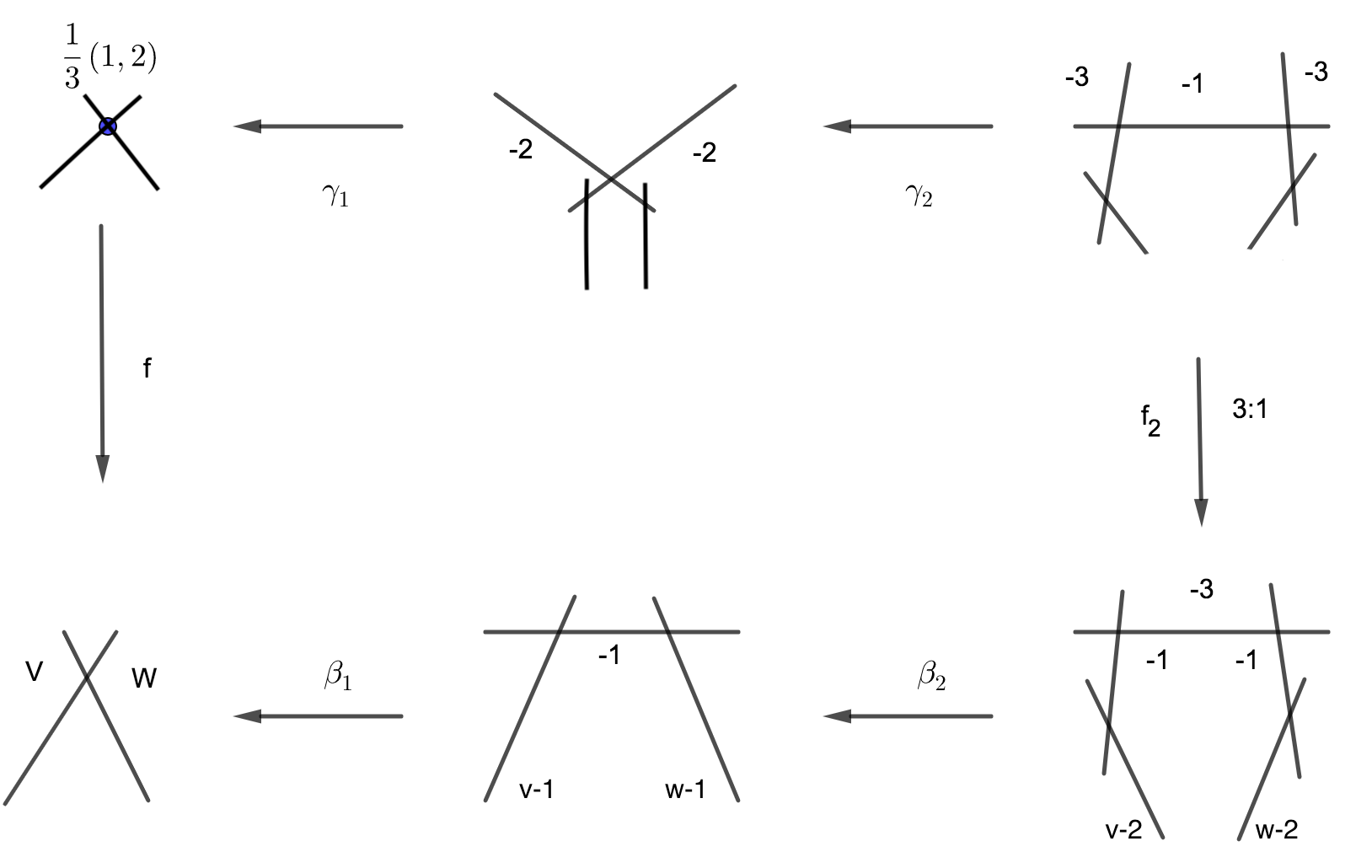}
			
			\verb|Figure 1|
		\end{center}
		
		\medskip
	\end{say}
	\begin{say}
		{\it Singularities of the branch locus of type 2.}\label{say: sing type 2}
		
		Let $f:X\ra S$ be a triple cover. Let $W$ and $V$ two curves on $S$, meeting transversally in the point $P$. Let $W$ and $V$ be contained in the branch locus, and assume that $P$ is an ordinary double point of the branch. Let us assume that locally the equation of the branch locus of the triple cover near to $P=(0,0)$ is $xy^2$. We blow up $P$ obtaining $\beta_1:S_1\ra S$ which introduces an exceptional divisor $E_P$. We denote by $W_1$ and $V_1$ the strict transforms of $W$ and $V$ with respect to $\beta_1$. 
		The divisor $E_P$ appears with multiplicity 3, so it is not contained in the branch locus of the triple cover.  The triple cover  $f_1:X_1\ra S_1$, induced by the one of $S$, is branched on $W_1\cup V_1$ and since the branch locus is smooth, $X_1$ is the canonical resolution $\widetilde{X}$ of $X$. The intersection properties on $S_1$ are the following $W_1^2=W^2-1$, $V^2=V_1^2-1$, $E_P^2=-1$, $W_1E_{P}=V_1E_P=1$, the other intersections are trivial. The self intersections of the inverse image of some curves on $S_1$ are the following: $$\left(f_1^{-1}(E_P)\right)^2=-3,\ \left(f_2^{-1}(W_1)\right)^2=(W^2-1)/3,\ \left(f_2^{-1}(V_1)\right)^2=(V^2-1)/3.$$ The contraction $\gamma_1:X_1\ra X$ of $E_P$ produces the surface $X$ (triple cover of $S$). The image of this curve under $\gamma_1$ is the point $f^{-1}(P)$, and thus $f^{-1}(P)$ is a singular point on $X$ of type $\frac{1}{3}(1,1)$.  In particular $X_1$ is also the minimal resolution $X'$ of $X$ which coincides in this case with $\widetilde{X}$. We have the following diagram
		$$\xymatrix{X\ar[d]_f&X_1\ar[l]^{\gamma_1}\ar[d]_{f_1}\\ S&S_1\ar[l]^{\beta_1}.}$$
	\end{say}
	
	We can summarize the above discussion with the following proposition.
	\begin{proposition}
		Let $f:X\ra S$ be a Galois triple cover. Let $P$ be a singular point of the branch locus.
		\begin{itemize} \item If the local equation of the branch locus near $P$ is $xy$, then $f^{-1}(P)$ is a singularity of $X$ type $\frac{1}{3}(1,2)$; 
			\item If the local equation of the branch locus near $P$ is $xy^2$, then $f^{-1}(P)$ is a singularity of $X$ type $\frac{1}{3}(1,1)$. 
		\end{itemize}
	
	\end{proposition}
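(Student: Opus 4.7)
The proposition is a summary of the explicit resolutions carried out in Paragraphs \ref{say: singu type 1} and \ref{say: sing type 2}, so the plan is to verify the singularity type in each case by invoking those constructions and reading off the Hirzebruch-Jung string produced by the contraction.

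For the first case, I would begin with the single blow-up $\beta_1 \colon S_1 \to S$ at $P$ and check in the chart where $x=u$, $y=uv$ that $\beta_1^*(xy) = u^2 v$, so that the exceptional divisor $E_P$ enters the branch of the induced Galois triple cover with multiplicity $2$. The branch divisor on $S_1$ then has two new nodes, at $E_P \cap W_1$ and $E_P \cap V_1$, and two further blow-ups at these nodes resolve both simultaneously: on the resulting surface $S_2$ the branch is smooth, so $f_2 \colon X_2 \to S_2$ is the canonical resolution $\widetilde{X}$. Reading off the self-intersections computed in Paragraph \ref{say: singu type 1}, the curve $f_2^{-1}(\widetilde{E_P})$ is a $(-1)$-curve, and contracting it produces the minimal resolution $X'$, in which the images of $f_2^{-1}(E_R)$ and $f_2^{-1}(E_Q)$ become two $(-2)$-curves meeting transversely in one point, i.e.\ an $A_2$ chain. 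Contracting this Hirzebruch-Jung string recovers $X$ and identifies the singularity $f^{-1}(P)$ as $\frac{1}{3}(1,2)$.

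For the second case, the analogous local computation gives $\beta_1^*(xy^2) = u^3 v^2$, so after a single blow-up the exceptional divisor appears with multiplicity $3$ in the pulled-back branch. Since the branch of a Galois triple cover is defined modulo $3$, $E_P$ drops out of the new branch on $S_1$. The strict transforms $W_1$ and $V_1$ are smooth and disjoint, so the branch on $S_1$ is already smooth and $f_1 \colon X_1 \to S_1$ is the canonical resolution. As $E_P$ is disjoint from the branch of $f_1$, its preimage is a smooth irreducible curve $f_1^{-1}(E_P)$ of self-intersection $-3$; contracting this single $(-3)$-curve on the smooth surface $X_1$ produces $X$ with a $\frac{1}{3}(1,1)$ singularity at $f^{-1}(P)$, which is the standard description of that cyclic quotient.

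The work in each case reduces to two routine verifications: (i) the multiplicity of the exceptional divisor in the pulled-back branch after each blow-up, which is a local monomial calculation, and (ii) the self-intersections of the preimages in the triple cover, obtained from the total ramification formula for a Galois $\mathbb{Z}/3\mathbb{Z}$-cover. There is no real conceptual obstacle since the full resolution has already been written out in Paragraphs \ref{say: singu type 1} and \ref{say: sing type 2}; the proposition simply records the conclusion.
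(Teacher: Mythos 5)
Your overall route is the same as the paper's: the proposition is stated there precisely as a summary of the explicit resolutions in Paragraphs \ref{say: singu type 1} and \ref{say: sing type 2}, and your treatment of the node $xy$ (multiplicity $2$ on $E_P$, two further blow-ups, the $(-1)$-curve $f_2^{-1}(\widetilde{E_P})$, and the residual $A_2$-chain of $(-2)$-curves whose contraction gives the $\frac{1}{3}(1,2)$ point) matches it.

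There is, however, a genuine error in your second case. You write that ``$E_P$ is disjoint from the branch of $f_1$'' and deduce from this that $f_1^{-1}(E_P)$ is a smooth irreducible curve with self-intersection $-3$. The premise is false: after the blow-up the strict transforms $W_1$ and $V_1$ are separated from each other, but each still meets the exceptional curve, $W_1E_P=V_1E_P=1$ (as recorded in Paragraph \ref{say: sing type 2}). Worse, if $E_P$ really were disjoint from the branch the conclusion you want would fail: since $\PP^1$ admits no connected unramified triple covers, $f_1^{-1}(E_P)$ would split into three disjoint $(-1)$-curves, their contraction would produce three smooth points, and no $\frac{1}{3}(1,1)$ singularity would appear. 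The correct argument is the opposite one: $E_P$ is not a component of the branch (its multiplicity in the pulled-back branch is $3\equiv 0$), but it meets the totally ramified branch $W_1\cup V_1$ transversally in two points; hence $f_1^{-1}(E_P)\to E_P$ is an irreducible triple cover of $\PP^1$ totally ramified over two points, so by Hurwitz it is again a smooth rational curve, and since $E_P$ is not in the branch the reduced preimage equals $f_1^*E_P$, giving $\left(f_1^{-1}(E_P)\right)^2=3E_P^2=-3$. With that sentence corrected, contracting this single $(-3)$-curve does produce the $\frac{1}{3}(1,1)$ point and your proof agrees with the paper's.
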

	In both the previous cases $f^{-1}(P)$ is a negligible singularity by Proposition \ref{prop: negligible}.
	
	\begin{remark}{\rm 
			Let $f:X\ra S$ be a triple cover with Galois cover data $(B,C,L,M)$. 
			
			Let $V$ and $W$ two irreducible components of the branch locus, meeting in a point $P$. If $P$ is a singularity of type 1, then $V$ and $W$ are both components either of $B$ or of $C$.
			If $P$ is a point of type 2 then, w.l.o.g., $V$ is a component of $B$ and $W$ is a component of $C$.
			
			We observe the ordinary nodes of a curve (in the branch locus) are singularities of type 1.
		}  	
	\end{remark}
	
	By considering the contraction $\gamma_1$ described in \ref{say: singu type 1} and \ref{say: sing type 2} one obtains the following.
	\begin{corollary}
		Let $X\ra S$ be a Galois triple cover such that the branch locus contains $c$ singularities of type $1$ and $b$ singularities of type 2 and no other singular points. The Picard number of  the minimal resolution $X'$ of $X$ is $\rho(X')=\rho(X)+2c+b$, the ones of the canonical resolution  $\widetilde{X}$ is $\rho(\widetilde{X})= \rho(X)+3c+b$ (see also Figure 1).
	\end{corollary}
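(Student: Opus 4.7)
The plan is to reduce the statement to a purely local computation at each singular point of the branch locus and sum the contributions, using the explicit resolutions constructed in Paragraphs~\ref{say: singu type 1} and \ref{say: sing type 2}. The key observation is that distinct singular points of the branch locus lie at disjoint locations of $S$, so the exceptional divisors over them on either resolution are pairwise disjoint; moreover the intersection form on each local exceptional cluster is negative definite, which is what guarantees that the locally produced classes are independent of the pullback of $\NS(X)$ and of each other.

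First I would treat a type~1 point $P$. By Paragraph~\ref{say: singu type 1} the minimal resolution $X'=X_1$ is obtained from $X$ by resolving the $A_2$ singularity $\frac{1}{3}(1,2)$ at $f^{-1}(P)$: this introduces the two curves $\gamma_2(f_2^{-1}(E_R))$ and $\gamma_2(f_2^{-1}(E_Q))$, both of self-intersection $-2$ and meeting transversally, with intersection form $\bigl(\begin{smallmatrix}-2 & 1 \\ 1 & -2\end{smallmatrix}\bigr)$. Since this form is negative definite, the two classes are linearly independent and contribute $+2$ to $\rho(X')-\rho(X)$. The canonical resolution $\widetilde{X}=X_2$ is then obtained from $X'$ by an additional blow-up that introduces the $(-1)$-curve $f_2^{-1}(\widetilde{E_P})$, again independent of the previous classes (the local intersection matrix is enlarged by one more negative-definite block), giving a total contribution of $+3$ to $\rho(\widetilde{X})-\rho(X)$.

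Next I would treat a type~2 point $P$ using Paragraph~\ref{say: sing type 2}. In this case $\widetilde{X}=X_1=X'$ and the resolution is obtained from $X$ by a single blow-up that replaces the $\frac{1}{3}(1,1)$ singularity $f^{-1}(P)$ by one irreducible curve, namely $f_1^{-1}(E_P)$, which has self-intersection $-3$. This single negative-definite class is independent of $\NS(X)$, contributing $+1$ to both $\rho(X')-\rho(X)$ and $\rho(\widetilde{X})-\rho(X)$.

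Finally I would assemble the global count. The exceptional divisors from distinct singular points of the branch locus are supported on disjoint curves, hence the total intersection matrix of the locally introduced classes is block diagonal with negative-definite blocks, so they are jointly independent in the Néron–Severi group and independent of $f^*\NS(S)$ contributions already present in $\NS(X)$. Summing the local contributions over $c$ points of type~1 and $b$ points of type~2 gives $\rho(X')=\rho(X)+2c+b$ and $\rho(\widetilde{X})=\rho(X)+3c+b$. The only mildly subtle point is the joint independence of the exceptional classes with $\NS(X)$, but this is automatic because contracting a negative-definite configuration strictly decreases the Picard rank by the number of contracted components.
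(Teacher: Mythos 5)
Your proposal is correct and follows essentially the same route as the paper, which simply reads off the counts of exceptional curves from the local resolutions of type~1 and type~2 singularities constructed in Paragraphs~\ref{say: singu type 1} and \ref{say: sing type 2} (two curves, respectively three, over a type~1 point for $X'$, respectively $\widetilde{X}$; one curve over a type~2 point for both) and sums over the singular points. Your added justification of the joint independence of the exceptional classes via negative definiteness and disjointness is a harmless elaboration of what the paper leaves implicit.
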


	In the following lemma and examples we will show that singularities both of type 1 and of type 2 appear as branch locus of triple cover of K3 surfaces.
	\begin{lemma}\label{lemma: type of sing} Let $N_1$ and $N_2$ be two rational curves contained in the branch locus of a Galois triple cover $X\ra S$ and such that any other curves contained in the branch locus is disjoint from them. Let us assume that $N_1N_2=k$. 
		Then:\\ $k\not \equiv_3 0$;
		\\ if $k\equiv_3 1$ the points of intersection of $N_1$ and $N_2$ are of type 2;
		\\ if $k\equiv_3 2$ the points of intersection of $N_1$ and $N_2$ are of type 1.\end{lemma}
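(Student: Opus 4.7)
The plan is to extract the result from the decomposition of the branch locus as $B+C$ together with the mod-$3$ congruences in Lemma \ref{lemma: intersections}. First I observe that, since all other branch components are disjoint from $N_1\cup N_2$ and $k = N_1\cdot N_2 > 0$, the two rational curves must sit in a single connected component of the branch locus, and under the disjointness hypothesis that component equals $D := N_1+N_2$. Using $N_i^2=-2$ one immediately computes
\[
D^2 = -4 + 2k \equiv 2k+2 \pmod 3.
\]

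The next step is a short case analysis on the partition $D = B'+C'$, where $B'$ (resp.\ $C'$) is the sum of the $N_j$'s which are components of $B$ (resp.\ $C$); the reducedness of $B+C$ (needed for $X$ to be normal) guarantees that each $N_j$ sits in exactly one of $B$, $C$. Up to swapping $B$ and $C$ there are exactly two possibilities:
\[
\textbf{(i)}\ B'=N_1+N_2,\ C'=0 \qquad \text{or} \qquad \textbf{(ii)}\ B'=N_1,\ C'=N_2.
\]
By the remark preceding the lemma, case (i) realizes singularities of type $1$ at every intersection point of $N_1$ and $N_2$, while case (ii) realizes singularities of type $2$.

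Finally, I apply Lemma \ref{lemma: intersections}: on a connected component one has $B'^{\,2}\equiv C'^{\,2}\equiv B'C' \equiv D^2\pmod 3$. In case (i) this forces $0 \equiv 2k+2 \pmod 3$, i.e.\ $k\equiv 2\pmod 3$; in case (ii) it forces $-2 \equiv 2k+2 \pmod 3$, i.e.\ $k\equiv 1\pmod 3$. In both cases $k\not\equiv 0\pmod 3$, and the correspondence of residues with singularity types is exactly the one stated.

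The argument is essentially bookkeeping, and no step is genuinely hard. The only subtle point is conceptual rather than computational: one has to use the disjointness hypothesis to conclude that the connected component of the branch supporting $N_1$ and $N_2$ is \emph{exactly} $N_1+N_2$ (so that $D^2$ can be computed purely from $N_1^2$, $N_2^2$ and $k$), before the congruences of Lemma \ref{lemma: intersections} can be applied componentwise.
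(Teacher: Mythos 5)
Your route is, in substance, the paper's own: the paper works directly with the integrality conditions $LN_i\in\Z$, writing the summand of $L$ supported on these curves as $(\alpha N_1+\beta N_2)/3$ with $\alpha,\beta\in\{1,2\}$, and your cases (i) and (ii) correspond exactly to the two possibilities $\beta=\alpha$ and $\beta\neq\alpha$ there; since Lemma \ref{lemma: intersections} is itself deduced from those same conditions, routing the computation through it changes only the packaging. Your congruences are correct ($k\equiv_3 2$ in case (i), $k\equiv_3 1$ in case (ii)) and agree with the paper's, and your identification of case (i) with type 1 and case (ii) with type 2 (the converse of the Remark preceding the lemma) is exactly how the paper argues, so that part is fine.

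There is, however, one small but genuine gap: you assume $k=N_1N_2>0$ from the start, whereas the lemma only assumes $N_1N_2=k$, and the conclusion $k\not\equiv_3 0$ asserts in particular that $k\neq 0$. When $k=0$ your divisor $D=N_1+N_2$ is not a connected component of the branch locus (by the disjointness hypothesis each $N_i$ is then a connected component on its own), so the computation of $D^2$ and the ensuing case analysis do not apply, and the case $k=0$ is left unexcluded. The repair is one line inside your own framework: apply Lemma \ref{lemma: intersections} to the component $D_i=N_i$; one of $B_i$, $C_i$ is zero, so $B_iC_i=0$, while the lemma forces $B_iC_i\equiv_3 D_i^2=-2$, a contradiction. (The paper's argument covers this automatically, since the condition $-2\alpha+\beta k\equiv_3 0$ is already impossible for $k=0$.) With that sentence added, your proof is complete and matches the statement.
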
 
	\proof Since $N_1$ and $N_2$ have trivial intersection with any other components of the branch locus, the condition $LN_i\in \Z$, restricts to the condition $\mbox{$N_i(\alpha N_1+\beta N_2)/3\in \Z$}$, where $\alpha\in\{1,2\}$ and $\beta\in\{1,2\}$ and $(\alpha N_1+\beta N_2)/3$ is the summand of $L$ in which $N_1$ and $N_2$ appear with non trivial coefficient. Up to replace possibly $L$ with $M$, we can assume that $\alpha=1$. So the condition is now $N_i(N_1+\beta N_2)/3\in \Z$ which implies $-2+\beta k\equiv_3 0$ and $k-2\beta\equiv_3 0$. These conditions imply that $k\not\equiv_3 0$, $k\equiv_3 1$ if an only if $\beta=2$ and $k\equiv_3 2$ if and only if $\beta=1$.\endproof
	\begin{example} Let us consider an elliptic fibration $\mathcal{E}:S\ra\mathbb{P}^1$ with 2 fibers of type $I_2$ over the point $p_1$, $p_2$. The base change of order 3 $f:\mathbb{P}^1\ra\mathbb{P}^1$ branched over the points $p_1$ and $p_2$ induces a triple cover of $S$ whose branch locus contains 4 singularities of type 1. Indeed the branch curves are rational curves meeting in 2 points and then the singularities are of type 1 by Lemma \ref{lemma: type of sing}.
	\end{example}
	The following example contains singularities of type 2 and moreover shows that the minimal resolution $X'$ of $X$ constructed above is not necessarily a minimal model: 
	
	\begin{example} Let $f:S\ra\mathbb{P}^1$ be an elliptic fibration with 6 fibers of type $I_3$ and a 3-torsion section. Generically $\rho(S)=14$, see \cite[Section 4.1]{GS07}. Let $\Theta_i^{(j)}$, $i=0,1,2$ and $j=1,\ldots, 6$ be the irreducible components of the reducible fibers. There exists a $3:1$ cover $X\ra S$ branched over $\Theta_1^{(j)}$, $\Theta_2^{(j)}$ for $j=1,\ldots, 6$. The minimal model $X^{\circ}$ is a K3 surface, see \cite[Proposition 4.1]{G13}. In particular this triple cover is branched over 6 configurations of type $A_2$ of rational curves and the branch data are $B=\sum_{j=1}^6\Theta_1^{(j)}$, $C=\sum_{j=1}^6\Theta_2^{(j)}$, $L=(2B+C)/3$ and $M=(B+2C)/3$. 
		
		The surface $X^{\circ}$ has Picard number equal to 14, see \cite[Proposition 4.1]{GS07}, but the minimal resolution of $X'$ has Picard number $\rho(X')=\rho(S)+6$. In particular this implies that $X^{\circ}\neq X'$, i.e. the minimal resolution of $X$ is not minimal.
	\end{example}
	
	The minimal resolution of $X$ is not a minimal model if there is configuration of type $A_2$ in the branch locus of $X\ra S$, indeed in this case one has to contracts 2 $(-1)$-curves for each of these configurations, as show in  \verb|Figure 2|.

	\begin{center}
		\includegraphics[width=8cm]{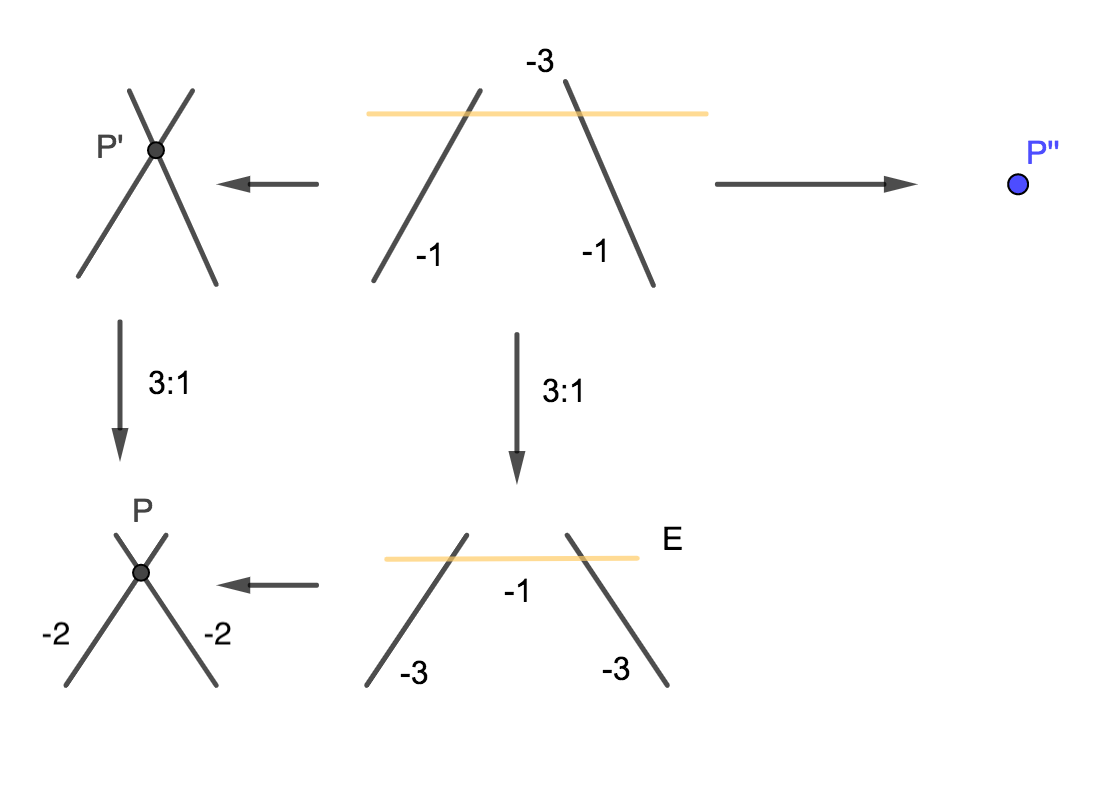}
		
		\verb|Figure 2|
	\end{center}

	\begin{rem}\label{rem: contraction of curves singularities of type 2}{\rm Let $f:X\ra S$ be a triple cover, $X'$ the minimal resolution of $X$ and $X^{\circ}$ the smooth minimal model of $X$. By the above construction one obtains that if a singularity of type 2 lie on exactly one rational curve which contains no other singularities of the branch locus, then its inverse image in $X'$ is contractible and so $K_{X^{\circ}}^2\geq K_{X'}^2+1$. If the singularity of type 2 lie on the intersection of two rational curves which do not contain other singularities of the branch locus, then in $X'$ there are three contractible curves (this is the case of Figure 2) and so $K_{X^{\circ}}^2\geq K_{X'}^2+3$. We refer to the last situation as $A_2$-configuration in the branch locus.}\end{rem}

	In Theorem \ref{say: other singularities in the branch locus} we will analyze other type of singularities which can appear in the branch locus of a triple cover.

	\medskip

	\begin{proposition}\label{prop: rito numbers}
		Let $X$ be a normal variety which is a Galois triple cover of $S$, whose branch locus has $n$ connected components, i.e., the $n$ connected reducible curves $D_1,\ldots, D_n$. Then 
		
		\begin{enumerate}
			\item Each curve $D_i$ is reduced;
			\item At most one $D_i$ is a curve with positive self intersection.

			\item  If the singularities of $X$ are negligible, denoted by $(B,C,L,M)$ the Galois triple cover data, the invariants of the minimal resolution $X'$ are 
			\[
			\begin{split} \chi(\mathcal{O}_{X'})= & \, 6+\frac{1}{2}(L^2+M^2),\\
				K^2_{X'}= & \, 2L^2+2M^2+LM,\\ 
				e(X')=& \, 72+4(L^2+M^2)-LM,\ h^{1,0}(X')=h^1(L)+h^1(M).
			\end{split}
			\]
		\end{enumerate}
	\end{proposition}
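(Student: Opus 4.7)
The proposal is to treat the three parts largely independently, since they rest on quite different ingredients: (1) is a direct consequence of normality, (2) comes from the Hodge index theorem on the K3 surface, and (3) is a computation that reduces to Proposition \ref{prop.invariants} once one identifies the Chern classes of the Tschirnhausen bundle in the Galois case via Proposition \ref{prop_MirandaGalois}.

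For (1) I would recall that, by the discussion in Paragraph \ref{say_tripleGalois}, $X$ is normal if and only if the branch divisor $B+C$ is reduced. Since each $D_i=B_i+C_i$ is a sum of parts of $B+C$, each $D_i$ is reduced as well. For (2) I would argue by contradiction: suppose $D_i$ and $D_j$ are two distinct connected components with $D_i^2>0$ and $D_j^2>0$. Being different connected components, they are disjoint, so $D_i\cdot D_j=0$. Then the sublattice of $\NS(S)$ they generate is an orthogonal sum of two classes of positive square, so it has signature $(2,0)$. This contradicts the fact that $\NS(S)\hookrightarrow H^2(S,\Z)$ has signature $(1,\rho(S)-1)$ because $S$ is a K3 surface, proving (2).

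For (3) I would use that in the Galois case $\EE=\mathcal{L}^{-1}\oplus\mathcal{M}^{-1}$ by Proposition \ref{prop_MirandaGalois}, and therefore, from $c(\EE)=(1-L)(1-M)$, one reads
\[
c_1(\EE)=-(L+M),\qquad c_1(\EE)^2=L^2+2LM+M^2,\qquad c_2(\EE)=LM.
\]
Since $S$ is K3, we have $K_S\equiv 0$, $\chi(\mathcal{O}_S)=2$ and $e(S)=24$. Because the singularities of $X$ are assumed to be negligible, by Definition \ref{def.neg.sing} the formulae of Proposition \ref{prop.invariants} compute the invariants of $X'$. Substituting the values above into those four formulae gives, after the straightforward cancellations,
\[
\chi(\mathcal{O}_{X'})=6+\tfrac12(L^2+M^2),\qquad K_{X'}^2=2L^2+2M^2+LM,
\]
\[
e(X')=72+4(L^2+M^2)-LM.
\]
For the last identity, $h^{1,0}(X')=q(X')=h^1(S,\mathcal{O}_S)+h^1(S,\EE)=h^1(\mathcal{L}^{-1})+h^1(\mathcal{M}^{-1})$ by Proposition \ref{prop.invariants}(i), and Serre duality together with $K_S\equiv 0$ converts this to $h^1(L)+h^1(M)$.

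The only step that requires some care is verifying that all arithmetic matches up in part (3); the potential obstacle is ensuring that the negligible hypothesis is genuinely strong enough, but this is built into Definition \ref{def.neg.sing} and Proposition \ref{prop: negligible}, so once we know the singularities of $X$ are of type $\tfrac13(1,1)$ and $\tfrac13(1,2)$ (which is automatic in the cases described in Paragraphs \ref{say: singu type 1} and \ref{say: sing type 2}), the formulae transfer directly to $X'$ without further correction terms.
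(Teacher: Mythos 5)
Your proposal is correct and follows essentially the same route as the paper: reducedness from normality of $X$, the Hodge index theorem for part (2), and for part (3) the Chern classes of $\EE=\mathcal{L}^{-1}\oplus\mathcal{M}^{-1}$ plugged into Proposition \ref{prop.invariants}, with the negligible hypothesis transferring the formulae to $X'$ by Definition \ref{def.neg.sing}. The only (harmless) difference is in computing $h^{1,0}(X')$: you use Serre duality directly to get $h^1(\mathcal{L}^{-1})=h^1(L)$, whereas the paper reaches $h^1(\EE)=h^1(L)+h^1(M)$ via a Riemann--Roch computation of $\chi(\EE)$ together with $h^0(\EE)=0$; both are fine.
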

	\proof Condition $(1)$ is needed to obtain a normal cover $X\ra S$, see paragraph \ref{say_tripleGalois}. 
	
	Condition $(2)$ follows by the Hodge Index Theorem. 
	
	Since the singularities are negligible, the numerical invariants for $X'$ can be calculated using the Tschirnhausen bundle $\EE=\mathcal{L}^{-1} \oplus \mathcal{M}^{-1}=\mathcal{O}_S(-L) \oplus\mathcal{O}_S(-M)$. It has the following Chern classes
	\[ c_1(\EE)=-L-M, \quad c_2(\EE)=L\cdot M.
	\]
	To have a connected covering we have:
	\[ 0=h^0(\EE)=h^0(-L)+h^0(-M) \, \Rightarrow \, h^0(-L)=h^0(-M)=0.
	\]
	In addition, by Hirzebruch--Riemann--Roch theorem we have
	\begin{equation}\label{eq_chiT}\begin{split}
			\chi(\EE)=h^2(\EE)-h^1(\EE)=h^0(L)+h^0(M)-h^1(\EE)&=\\ =\int_S \textrm{Td}(S) \cdot \textrm{ch}(\EE) = 
			\int_S(1,0,2)\cdot\left(2,-L-M,\frac{L^2+M^2}{2}\right)&=\frac{L^2+M^2}{2}+4.
		\end{split}
	\end{equation} 
	So by Proposition \ref{prop.invariants} (ii) we have:
	$\chi(\mathcal{O}_{X'})=6+\frac{1}{2}(L^2+M^2)$,
	by Proposition \ref{prop.invariants} (iii) we have:
	$K^2_{X'}=2L^2+2M^2+LM
	$ and finally, by Proposition \ref{prop.invariants} (iv) we have:
	$e(X')=72+4(L^2+M^2)-LM.$
	
	By equation \eqref{eq_chiT} and the genus formula:
	\begin{equation}
		\begin{split}
			h^1(\EE)=h^0(L)+h^0(M)-\chi(\EE)=\left(\chi(L)+h^1(L)\right)+\left(\chi(M)+h^1(M)\right)-\chi(\EE)= \\
			= \left(\frac{L^2}{2}+2+h^1(L)\right)+\left(\frac{M^2}{2}+2+h^1(M)\right)-\frac{L^2+M^2}{2}-4=h^1(L)+h^1(M),
		\end{split}
	\end{equation} which allows to compute $h^{1,0}(X')$, by Proposition \ref{prop.invariants} (i).
	
	\endproof
	\begin{rem}
{\rm If the branch locus of the cover $f:X\ra S$ is smooth, then $BC=0$ and so the formulae in Proposition \ref{prop: rito numbers} and \eqref{eq2} which compute the self intersection of the canonical divisor of $X$ coincides and give $K_X^2=12(B^2+C^2)$.}	\end{rem}

	\begin{lemma}\label{lemma: -2 curves}
		Let us consider a connected reducible reduced curve, $C$, on a K3 surface $S$, whose irreducible components are rational curves (i.e. negative curves). If $C^2\leq 0$, then $C^2=-2$.
		
		In particular, a reducible reduced negative curve $D$ such that $D^2<-2$ has more than one connected component.
	\end{lemma}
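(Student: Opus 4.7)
The plan is to deduce the lower bound $C^2 \geq -2$ from adjunction on the K3 surface together with a connectedness argument on the dual graph of components; the ``in particular'' statement will then follow by contraposition, and the claimed equality $C^2 = -2$ from a parity consideration.

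The first step is to invoke $K_S = 0$ in adjunction: for each irreducible component $C_i$ of $C$ the formula $C_i^2 = 2p_a(C_i) - 2$, together with $p_a(C_i) \geq g(C_i) = 0$ (rationality) and $C_i^2 < 0$ (the parenthetical ``negative curves''), pins down each $C_i$ as a smooth $(-2)$-curve. Writing $C = \sum_{i=1}^k C_i$ with $k \geq 2$ (by reducibility), I would expand
$$C^2 \;=\; -2k + 2m, \qquad m := \sum_{i<j} C_i \cdot C_j \geq 0,$$
and form the dual graph $\Gamma_C$ on $k$ vertices $v_1, \dots, v_k$ with $C_i\cdot C_j$ edges (counted with multiplicity) between $v_i$ and $v_j$. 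Scheme-theoretic connectedness of $C$ forces $\Gamma_C$ to be a connected graph on $k$ vertices, hence to contain a spanning tree with $k-1$ edges; therefore $m \geq k-1$ and $C^2 \geq -2$.

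Since $C^2 = -2k + 2m$ is an even integer and $\leq 0$ by hypothesis, it lies in $\{-2, 0\}$; the value $0$ occurs exactly when $m = k$, i.e.\ $\Gamma_C$ is a spanning tree together with one extra edge, realising a Kodaira $I_k/\tilde A_{k-1}$-cycle. This borderline case is the one that must be excluded (under the implicit non-degeneracy assumption on the lattice spanned by the $C_i$'s) in order to conclude $C^2 = -2$. The ``in particular'' clause is then immediate by contraposition: any connected reduced reducible curve with negative rational components on $S$ has $C^2 \geq -2$, so a reducible reduced negative curve $D$ with $D^2 < -2$ cannot be connected and must split into at least two connected components.

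The only mildly delicate point is thus the exclusion of the $I_n$-cycle case $C^2 = 0$; the lower bound $C^2 \geq -2$ itself, which already yields the ``in particular'' conclusion, is a clean combinatorial consequence of adjunction and graph connectivity.
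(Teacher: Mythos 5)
Your argument is correct and essentially the same as the paper's: both expand $C^2=-2k+2\sum_{i<j}C_iC_j$ for the $(-2)$-components and use connectedness of the dual graph to get $\sum_{i<j}C_iC_j\geq k-1$, hence $C^2\geq -2$, with the ``in particular'' clause following by contraposition. Your remark that the borderline case $C^2=0$ (a cycle, i.e.\ an $I_k$-type configuration) is not excluded by this count is a fair reading of the statement, and it matches the paper's own proof, which likewise only concludes $C^2=-2$ under the strict inequality $C^2<0$.
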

	\proof Let us call $C_i$ the irreducible components of $C$. Then $C=\sum_{i=1}^n C_i$ and 
	\[ (C)^2=-2n+2\sum_{i=1}^n\sum_{j=i+1}^{n}C_iC_j.
	\] 
	$C$ is connected and each curve $C_i$ intersects at least one other curve, one has $\sum_{i=1}^n\sum_{j=i+1}^{n}C_iC_j\geq n-1$. Hence $C^2\geq -2n+2(n-1)=-2$. If $C^2<0$, then $C^2=-2$.\endproof

	\begin{lemma}
		Let $S$ be a K3 surface, and $\Lambda$ a sublattice of $NS(S)$. If $\Lambda$ is generated by irreducible curves and is degenerate, then all the curves represented by classes in $\Lambda$ are either fibers or components of fibers of a genus 1 fibration on $S$.
	\end{lemma}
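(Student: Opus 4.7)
The plan is to first show $\Lambda$ must be negative semi-definite, then produce an explicit effective nef isotropic class in $\Lambda$, and finally invoke Saint-Donat's description of nef isotropic classes on a K3 surface to obtain the genus $1$ fibration.

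First I will use the Hodge index theorem: since $\NS(S)$ has signature $(1,\rho(S)-1)$, if $\Lambda$ contained a class $v$ with $v^2>0$ then the orthogonal complement $v^\perp$ in $\NS(S)\otimes\Q$ would be negative definite and would contain the radical of $\Lambda$; but a negative-definite lattice has no non-zero isotropic vectors, so the radical of $\Lambda$ would be zero, contradicting degeneracy. Hence $\Lambda$ is negative semi-definite, and in particular every generator $C_i$ satisfies $C_i^2\in\{-2,0\}$, being an irreducible curve on the K3 surface.

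Second, I will produce an effective nef isotropic class $D\in\Lambda$ by a case analysis. If some generator, say $C_1$, has $C_1^2=0$, then evaluating $(C_1+tC_j)^2\le 0$ for all $t\in\R$ forces $C_1\cdot C_j=0$ for every other generator, so $C_1$ itself lies in the radical; I take $D:=C_1$. Otherwise every $C_i$ is a $(-2)$-curve and $\Lambda$ is an orthogonal sum of (finite or affine) Dynkin components. Degeneracy forces at least one component to be affine, and the unique (up to scaling) isotropic element of an affine root lattice is a non-negative integral combination of its simple roots; I take $D$ to be this fundamental isotropic vector. In either case $D$ is effective, $D^2=0$, $D\cdot C_i=0$ for every $i$, and $D$ is nef on $S$: for any irreducible curve $E$ on $S$ either $E$ is a component of the support of $D$, in which case $E$ equals some $C_i$ and $D\cdot E=0$ by construction, or it is not, in which case $D\cdot E\ge 0$ because $D$ is effective.

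Third, I will invoke Saint-Donat's theorem: a non-zero nef class of square zero on a K3 surface is a positive multiple $aF$ of a primitive class $F$ with $F^2=0$, and $|F|$ is a base-point-free pencil inducing a genus $1$ fibration $\pi\colon S\ra\PP^1$ whose generic fibre has class $F$. Since every $w\in\Lambda$ satisfies $w\cdot D=0$ and hence $w\cdot F=0$, any irreducible curve whose class lies in $\Lambda$ is contracted by $\pi$ and is therefore a fiber (if its self-intersection is $0$) or a component of a reducible fiber (if its self-intersection is $-2$). The same conclusion applies to any irreducible curve whose class happens to lie in $\Lambda$, proving the lemma.

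The main obstacle will be Step 2: one needs an \emph{effective nef} isotropic class in $\Lambda$, not merely an arbitrary element of the radical, since Saint-Donat applies only to nef classes. The $(-2)$-curve case relies on the standard classification of negative semi-definite root lattices as orthogonal sums of (finite or affine) Dynkin diagrams together with the positivity of the fundamental radical vector of each affine summand, and the verification of nefness on the ambient K3 surface uses that the only candidates for negative intersection with $D$ are its own components, which are themselves generators of $\Lambda$ and hence automatically orthogonal to $D$ by the radical property.
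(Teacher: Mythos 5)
Your proof is correct, but it takes a more explicit route than the paper at the key step. The paper's argument is very short: it picks an arbitrary nonzero element $v$ of the radical of $\Lambda$ (so $v^2=0$ and $v\cdot w=0$ for all $w\in\Lambda$), invokes Riemann--Roch on the K3 surface to conclude that $v$ or $-v$ is effective, cites Kond\=o's Lemma 2.1 for the fact that this yields a genus $1$ fibration, and then concludes exactly as you do, by orthogonality, that every curve with class in $\Lambda$ is contracted and hence vertical. You instead first prove negative semi-definiteness of $\Lambda$ via Hodge index (the paper never states this, though it is implicit), and then manufacture a radical class that is visibly \emph{effective and nef}: either a square-zero generator, or the fundamental isotropic vector of an affine Dynkin component in the all-$(-2)$ case, before applying Saint-Donat. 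What your approach buys is precisely the nefness issue that the paper glosses over: an effective isotropic class need not be nef, and the passage to a nef class (e.g.\ by Weyl reflections, as in the cited reference) could a priori disturb the orthogonality to $\Lambda$ that the final contraction argument needs; your class is supported on the generating curves and is nef by construction, so no such adjustment is required. The cost is that you must invoke the classification of negative semi-definite configurations of $(-2)$-curves as finite or affine ADE diagrams (with the positivity of the affine marks), whereas the paper only needs Riemann--Roch plus the quoted lemma. Both arguments finish identically; as a minor point common to both, curves ``represented by classes in $\Lambda$'' may be reducible, and one should note that a divisor of total intersection zero with the nef fiber class has each component vertical, which is immediate.
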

\proof Since $\Lambda$ is degenerate, there exists $v\in\Lambda$ such that $v^2=0$ and $vw=0$ for every $w\in \Lambda$. By Riemann--Roch Theorem either $v$ or $-v$ is effective, so we assume that $v$ is effective and hence $\varphi_{|v|}$ is a fibration whose smooth fibers are genus 1 curve, cf. e.g. \cite[Proof of Lemma 2.1]{Ko}. Since any other class $w$ in $\Lambda$ has a trivial intersection with $v$, if a curve is represented by $w$, it is contracted by $\varphi_{|v|}$ and hence it is contained in a fiber of the fibration $\varphi_{|v|}$. \endproof  	

Let $\Lambda_{D_i}$ be the lattice generated by the irreducible components of an effective divisor $D_i$.
\begin{theorem}\label{theorem: three cases giusto}
	Let $f\colon X \ra S$ be a normal Galois triple cover of a K3 surface, whose branch locus has $n \geq 1$ connected components, i.e., the $n$ connected possibly reducible curves $D_1,\ldots, D_n$. 
	
	\begin{itemize}
		\item[1)] If $\kappa(X)=0$, then all the lattices $\Lambda_{D_i}$ are $A_2(-1)$ (in particular negative definite).
	    \item[2)] If $\kappa(X)=2$, then there exists a lattice $\Lambda_{D_i}$ such that $\mbox{$\rm{sgn}(\Lambda_{D_i})=(1,\rk(\Lambda_{D_i})-1)$}$ and all the other lattices $\Lambda_{D_j}$ are negative definite.
		\item[3)] If $\kappa(X)=1$, then there are no lattices $\Lambda_{D_i}$ which are indefinite and there exists at least on $\Lambda_{D_i}$ which is degenerate.
	\end{itemize}
\end{theorem}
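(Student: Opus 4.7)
The plan is to read $\kappa(X)$ off the ramification divisor via the identity $K_X = R$ from \eqref{canonicalformula} and Proposition \ref{prop_kodairaDim}, and to translate the conditions on the moving/fixed parts of $|R|$ into signature conditions on the lattices $\Lambda_{D_i}$.

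The preliminary observation is that, because $f$ is Galois and therefore totally ramified over $B+C$, each irreducible component $V$ of the branch locus satisfies $f^{\ast}V = 3V'$ with $V'\subset X$ the reduced preimage, so $R = 2\sum_{k} B_k' + 2\sum_{l} C_l'$, and the projection formula gives $V' \cdot W' = \tfrac{1}{3}\, V\cdot W$ for any two components of the branch locus. Writing $R_i$ for the part of $R$ lying over $D_i$, the $R_i$'s are mutually disjoint on $X$ and the lattice $\Lambda_{R_i}$ spanned by its components has the \emph{same signature} as $\Lambda_{D_i}$; moreover the total lattice of components of $R$ is the orthogonal sum $\bigoplus_i \Lambda_{R_i}$.

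I then treat the three cases in the order $\kappa(X)=2$, $\kappa(X)=1$, $\kappa(X)=0$. If $\kappa(X)=2$, Proposition \ref{prop_kodairaDim} supplies a moving part $M$ of $|R|$ with $M^2>0$; writing $M = \sum M_i$ with $M_i$ supported on $R_i$, orthogonality of the $R_i$'s gives $M^2 = \sum M_i^2$, so some $M_i^2>0$, whence $\Lambda_{D_i}$ has signature $(1,\rk(\Lambda_{D_i})-1)$. The Hodge index theorem on $NS(X)$ (of signature $(1,\rho(X)-1)$) forces every other $\Lambda_{D_j}$ to be negative definite, since any isotropic class in $\Lambda_{R_j}$ is orthogonal to the positive class in $\Lambda_{R_i}$ and hence numerically trivial on $X$. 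The case $\kappa(X)=1$ is analogous: now $M \neq 0$ and $M^2 = 0$; the previous analysis rules out indefinite $\Lambda_{R_i}$, and $\sum M_i^2 = 0$ with each $M_i^2 \leq 0$ and some $M_i \neq 0$ forces some $\Lambda_{R_i}$, and therefore some $\Lambda_{D_i}$, to be degenerate.

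The subtlest case is $\kappa(X)=0$, where $|M| = \emptyset$ and $F=R$ is supported on rational curves. No $\Lambda_{R_i}$ can contain a positive class (which would immediately produce a moving part in $|R|$), nor an isotropic class: by the lemma on degenerate sublattices of $NS(S)$ preceding the theorem, an isotropic vector in $\Lambda_{D_i}$ would embed the components of $D_i$ into fibers of a genus-$1$ fibration on $S$, and pulling back to $X$ would place the components of $R_i$ into moving fibers, once again forcing $|M| \neq \emptyset$. Hence every $\Lambda_{D_i}$ is negative definite, so by Lemma \ref{lemma: -2 curves} each $D_i^2 = -2$ and is built from $(-2)$-curves; the identity $D_i^2 = -2r + 2e = -2$ forces $e = r-1$ simple intersections, so the dual graph of $D_i$ is an ADE Dynkin diagram. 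The main obstacle is narrowing ADE to $A_2$: invoking the integrality conditions $B_i^2 \equiv C_i^2 \equiv B_i \cdot C_i \equiv 1 \pmod 3$ of Lemma \ref{lemma: intersections}, together with the bipartition of the components into $B_i$ and $C_i$ dictated by Lemma \ref{lemma: type of sing}, one rules out $A_r$ with $r \geq 3$, $D_r$, $E_6$, $E_7$ and $E_8$ by checking that the resulting $B_i$, $C_i$ cannot simultaneously satisfy the $\bmod 3$ relations and lift to genuine Galois triple cover data, i.e.\ $(2B_i + C_i)/3 \in \Pic(S)$.
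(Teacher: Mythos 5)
Your overall strategy (pull everything up to $X$, decompose the ramification divisor over the $D_i$'s, and use orthogonality plus Hodge index) is a genuinely different route from the paper, which works downstairs on $S$ with the image $f(C)$ of a suitable curve $C\in|R|$. But there is a real gap at the pivotal step of your case $\kappa(X)=2$: Proposition \ref{prop_kodairaDim} only asserts that the general member of $|M|$ is supported on curves of genus $\geq 2$; it does \emph{not} give $M^2>0$. If $|M|$ is composed with a pencil, i.e. $X$ carries a fibration with fibres of genus $\geq 2$ whose (special) fibres make up the moving part of the divisor $R$, then $M^2=0$ although $\kappa(X)=2$ is perfectly possible, and your orthogonal decomposition only yields $\sum_i M_{0,i}^2=0$, from which no class of positive square in any $\Lambda_{R_i}$, hence in any $\Lambda_{D_i}$, can be extracted. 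This is precisely the subcase for which the paper's proof was designed: there one passes to $f(C)$ on $S$, and when $g(f(C))=1$ one shows that the relevant $D_i$ must contain a fibre class $F$ of the induced genus~$1$ fibration \emph{together with} a horizontal curve $\Delta$, so that $\alpha F+\Delta$ has positive square for $\alpha\gg 0$. Without some version of this fibre-plus-horizontal argument your case $(2)$ is incomplete. A related logical gap occurs in your case $\kappa(X)=1$: you exclude indefinite $\Lambda_{D_i}$ by appealing to ``the previous analysis'', but that analysis proves (at best) $\kappa(X)=2\Rightarrow$ some lattice is indefinite, not the converse; you would need the separate implication that an indefinite $\Lambda_{D_i}$ produces an effective divisor of positive square supported on the ramification and hence $\kappa(X)=2$, which is nowhere argued (the paper sidesteps this by showing directly that in case $(3)$ all branch components lie in fibres of the elliptic fibration, so no $\Lambda_{D_j}$ contains a positive class).

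In the case $\kappa(X)=0$ your plan is workable but not carried out, and it replaces the paper's argument by an equivalent combinatorial one: the paper shows the discriminant group of $\Lambda_{D_i}$ must contain $\Z/3\Z$, restricting to $A_{3k-1}(-1)$ or $E_6(-1)$, and then observes that every order-$3$ class in the discriminant group is fractionally supported only on disjoint $A_2$-subconfigurations, while the class $(2B_i+C_i)/3$ must involve \emph{every} component of the connected curve $D_i$; this forces $\Lambda_{D_i}\simeq A_2(-1)$. Your mod-$3$ check (assigning to each component a coefficient in $\{1,2\}$ and imposing integrality of $(2B_i+C_i)/3$ against each component) does succeed on every A-D-E diagram other than $A_2$, but you assert it rather than perform it, and Lemma \ref{lemma: type of sing} is not the right tool, since it only concerns configurations with exactly two branch components disjoint from the rest. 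Finally, two of your parenthetical justifications in this case need actual arguments: a positive-square class in $\Lambda_{R_i}$ does not ``immediately produce a moving part in $|R|$'' (one should first replace it by an effective divisor of positive square and argue via $|nK_X|$), and an isotropic class places the components of $R_i$ in fibres of a fibration over a base that may have positive genus, where fibres do not move in a linear system; the contradiction with $\kappa(X)=0$ again comes from the growth of $h^0(nK_X)$, not from $|M|\neq\emptyset$ for $|R|$ itself.
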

\proof $1)$ If $\kappa(X)=0$, each component of $D_i$ is a negative curve, and thus by adjunction a $(-2)$-curve, since the intersection with the canonical bundle is trivial for every curve on $X$ (if $X$ is a K3 surface see also Lemma \ref{lemma: -2 curves}). Therefore, the lattice $\Lambda_{D_i}$ is a negative definite root lattice, and thus $\Lambda_{D_i}(-1)$ is an A-D-E lattice, \cite[Lemma 2.12, Chapter I]{BHPV}. By \cite[Chapter I, Section 17]{BHPV} the existence of a triple cover branched in $D_i$ implies that a linear combination of the components of the $D_i$'s with non integer coefficients in $\frac{1}{3}\Z$ is contained in the Picard group of $S$. So the discriminant group of $\Lambda_{D_i}$ contains $\Z/3\Z$, which implies that either $\Lambda_{D_i}\simeq A_{3k-1}(-1)$ with $k>1$ or $\Lambda_{D_i}\simeq E_6(-1)$. In both the cases $\Lambda_{D_i}^{\vee}/\Lambda_{D_i}$ is a cyclic group, generated by a class $l$ (supported on the components of $D_i$). The generators of the subgroups $\Z/3\Z$ in  $\Lambda_{D_i}^{\vee}/\Lambda_{D_i}$ are multiple of $l$ and they are always supported on a disjoint $A_2$-configurations of curves. So each $D_i$ is the sum $B_i+C_i$ where $B_i$ and $C_i$ are two rational curves meeting in a point, i.e. $\langle B_i,C_i\rangle\simeq A_2(-1)$.

$2)$ If $\kappa(X)=2$, then there exists a curve $C\in |R|$, where $R$ is the ramification of $f$ such that $g(C)>1$ (see Proposition \ref{prop_kodairaDim}) and $C^2\geq 0$. By the projection formula $\left(f(C)\right)^2\geq 0$ and so $g(f(C))\geq 1$. There exists a (possibly reducible) branch curve $\Gamma$ linearly equivalent to $f(C)$ and contained in $D_i$. If $g(f(C))>1$, then $\Gamma^2>0$ and hence $\Lambda_{D_i}$ satisfies the hypothesis, by Hodge index Theorem. 

If $g(f(C))=1$, $f(C)$ defines a genus 1 fibration $\mathcal{E}$ on $S$. Moreover $f(C)$ intersects the branch locus, i.e. $f(C)\cap(\cup_iD_i)\neq \emptyset$, otherwise $g(C)=1$. This implies that $\Gamma$ in the union of a fiber of $\mathcal{E}$ and some horizontal curves. In particular $\Gamma$, and hence $D_i$, contains the class $F$ of the fiber of the fibration $\mathcal{E}$ and at lest an horizontal curve $\Delta$. The class $\alpha F+\delta\Delta$ is contained in $\Lambda_{D_i}$ and it has a positive self intersection for $\alpha$ sufficiently big. By Hodge index theorem the signature of all the lattice $\Lambda_{D_i}$ are the required ones.

$3)$ If $\kappa(X)=1$, then there exists a curve $C\in |R|$, where $R$ is the ramification of $f$ such that $g(C)=1$ and $C$ defines a genus 1 fibration on $X$. Hence $f(C)$ defines a genus 1 fibration on $S$. All the irreducible components of $|f(C)|$ are genus 1 curve and $f(C)^2=0$. As above there exists a (possibly reducible) branch curve $\Gamma$ linearly equivalent to $f(C)$ and contained in $D_i$. Then $\Gamma\in |f(C)|$ and it is a fiber (irreducible or not), so it coincides with $D_i$ for a certain $i$ and $\Lambda_{D_i}$ is degenerate. All the others $D_j$'s are contained in (or coincides with) fibers, so each $\Lambda_{D_j}$ can not contain a class with positive self intersection. \endproof

\begin{corollary}\label{cor: giusto}
Let $f\colon X \ra S$ be a normal Galois triple cover of a K3 surface, whose branch locus has $n \geq 1$ connected components, i.e., the $n$ connected reducible and reduced curves $D_1,\ldots, D_n$. 
\begin{itemize}
	\item[1)] 
	If all the $\Lambda_{D_i}$ are negative definite, then $\kappa(X)=0$, $n=6$ or $n=9$ and the minimal model of $X$ is either a K3 or an Abelian surface.
	\item[2)]
	If ${\rm sgn}(\Lambda_{D_1})=(1,\rk(\Lambda_{D_1})-1)$, then $\kappa(X)=2$, $\Lambda_{D_j}\simeq A_2(-1)$ for all $j\neq 1$ and $n\leq 10$.
	\item[3)] If $\Lambda_{D_1}$ is degenerate, then $\Lambda_{D_j}$ for $j\neq 1$ is either degenerate or negative definite and $\kappa(X)=1$.	
\end{itemize}

\end{corollary}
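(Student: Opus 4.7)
The plan is to deduce all three items of the corollary by exhaustion from Theorem~\ref{theorem: three cases giusto}, supplementing it with a short numerical computation in case~1) and a rank estimate in case~2). By Proposition~\ref{prop_kodairaDim} we have $\kappa(X)\in\{0,1,2\}$. The three necessary conditions attached to the three values of $\kappa(X)$ in that theorem (all $\Lambda_{D_i}$ negative definite; some $\Lambda_{D_i}$ of signature $(1,\rk(\Lambda_{D_i})-1)$; some $\Lambda_{D_i}$ degenerate with none indefinite) are mutually exclusive, so each hypothesis of the corollary is compatible with exactly one value of $\kappa(X)$. This immediately forces $\kappa(X)=0,\,2,\,1$ in cases 1), 2), 3), respectively.

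For case~1), part~1) of the theorem refines the conclusion to $\Lambda_{D_i}\simeq A_2(-1)$ for every $i$, so I write $D_i=B_i+C_i$ with $B_i^2=C_i^2=-2$, $B_iC_i=1$, and (by Lemma~\ref{lemma: intersections}) all cross-intersections zero. A direct computation gives $L^2=M^2=-2n/3$ and $LM=-n/3$, and Proposition~\ref{prop: rito numbers} yields
\[
\chi(\mathcal{O}_{X'})=6-\tfrac{2n}{3},\qquad K_{X'}^2=-3n.
\]
By Lemma~\ref{lemma: type of sing} each point $B_i\cap C_i$ is a singularity of type~2, so Remark~\ref{rem: contraction of curves singularities of type 2} tells me that each of the $n$ resulting $A_2$-configurations contributes three $(-1)$-curves on $X'$ to be contracted. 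Passing to the minimal model preserves $\chi(\mathcal{O})$ and gives $K_{X^\circ}^2\geq K_{X'}^2+3n=0$; since $\kappa(X^\circ)=0$ forces $K_{X^\circ}^2\leq 0$, equality holds and no further $(-1)$-curves occur. Proposition~\ref{prop_kodairaDim1} excludes Enriques and bielliptic surfaces, so $X^\circ$ is either a K3 or an Abelian surface, with $\chi(\mathcal{O}_{X^\circ})\in\{2,0\}$, which forces $n=6$ or $n=9$ respectively.

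For case~2), the theorem gives $\kappa(X)=2$ and that all $\Lambda_{D_j}$, $j\neq 1$, are negative definite. The discriminant argument used in the proof of part~1 of Theorem~\ref{theorem: three cases giusto} (non-trivial $3$-torsion in $\Lambda_{D_j}^\vee/\Lambda_{D_j}$ must be supported on an $A_2$-configuration) identifies each of them with $A_2(-1)$. Since the $D_i$'s are pairwise disjoint, the lattices $\Lambda_{D_i}$ embed orthogonally into $\NS(S)$, whence
\[
\rk(\Lambda_{D_1})+2(n-1)\;=\;\sum_{i=1}^n\rk(\Lambda_{D_i})\;\leq\;\rho(S)\;\leq\;20.
\]
Since $\rk(\Lambda_{D_1})\geq 1$, this forces $n\leq 10$.

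Case~3) requires nothing beyond the exhaustion above: $\kappa(X)=1$ by the theorem, and the theorem's case~3 conclusion forbids any $\Lambda_{D_j}$ from being indefinite, so for $j\neq 1$ each $\Lambda_{D_j}$ is either degenerate or negative definite. The main obstacle is the arithmetic in case~1): one must carefully account for the contribution of the type-2 singularities to the $(-1)$-curves on the minimal resolution so as to compute $K_{X^\circ}^2$, and then extract $n\in\{6,9\}$ from the remaining degree of freedom in $\chi(\mathcal{O}_{X^\circ})$ using the classification of minimal surfaces with $\kappa=0$.
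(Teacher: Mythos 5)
Your exhaustion step is not as immediate as you claim, and this creates a genuine gap in case 2). The three conclusions of Theorem \ref{theorem: three cases giusto} are indeed pairwise incompatible, but what you need is that the \emph{hypothesis} of each item of the corollary is incompatible with the conclusions attached to the other two Kodaira dimensions, and this fails formally in the corner case $\rk(\Lambda_{D_1})=1$: there ${\rm sgn}(\Lambda_{D_1})=(1,0)$, so $\Lambda_{D_1}$ is positive definite of rank one, hence \emph{not} indefinite, and the necessary condition for $\kappa(X)=1$ (no indefinite lattice, some degenerate $\Lambda_{D_j}$) is not contradicted by your hypothesis alone. To exclude $\kappa(X)=1$ you must first show that no $\Lambda_{D_j}$, $j\neq 1$, can be degenerate, and that is exactly the Hodge index step the paper performs: since $\Lambda_{D_1}$ contains a class of positive square and the $D_j$ are disjoint from $D_1$, every $\Lambda_{D_j}$ lies in the orthogonal complement of a positive class and is therefore negative definite, hence nondegenerate. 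As written, your case 2) is circular in this corner case, because you quote ``the theorem gives $\kappa(X)=2$ and that all $\Lambda_{D_j}$ are negative definite'' while $\kappa(X)=2$ was only obtained from the (incomplete) exhaustion. The same oversight reappears in case 3): from ``no $\Lambda_{D_j}$ is indefinite'' you conclude ``degenerate or negative definite'', silently discarding the positive definite (rank one) possibility; this too needs the Hodge index theorem (a nonzero isotropic class of the degenerate $\Lambda_{D_1}$ cannot be orthogonal to a class of positive square in the hyperbolic lattice $\NS(S)$), or the paper's geometric route, which shows that $D_1$ is supported on a fiber of a genus 1 fibration and that all the other $D_j$ sit inside fibers, so no $\Lambda_{D_j}$ contains a positive class. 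Both gaps are repaired by one application of the Hodge index theorem, which is precisely how the paper argues, but they must be filled.

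Apart from this, your argument is sound, and your treatment of case 1) is genuinely different from the paper's: where the paper quotes Barth's result that a $3$-divisible set of $A_2$-configurations on a K3 surface has exactly $6$ or $9$ members (and identifies the cover accordingly), you recover $n\in\{6,9\}$ numerically, via $\chi(\mathcal{O}_{X'})=6-\tfrac{2n}{3}$ and $K_{X'}^2=-3n$ from Proposition \ref{prop: rito numbers} (the singularities being of type 2, hence negligible, by Lemma \ref{lemma: type of sing}), the $3n$ contractions of Remark \ref{rem: contraction of curves singularities of type 2}, $K^2_{X^{\circ}}=0$ for a minimal model with $\kappa=0$, and the exclusion of Enriques and bielliptic surfaces from Proposition \ref{prop_kodairaDim}. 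This is a valid and attractively self-contained alternative, at the price of redoing the invariant computation that the citation of \cite{B98} makes unnecessary; your rank estimate for $n\leq 10$ in case 2) is likewise a correct variant of the paper's count of disjoint $A_2$-configurations.
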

\proof 
The assumption that the curve $D_i$'s are reduced guarantees that $X$ is normal.
$(1)$ If all the $\Lambda_{D_i}$ are negative definite, we already showed in Theorem \ref{theorem: three cases giusto} that $\Lambda_{D_i}\simeq A_2(-1)$.
The sets of $A_2$-configurations which are 3-divisible (and thus are in the branch locus of a triple cover) necessarily contain either 6 or 9 $A_2$-configurations \cite[Lemma 1]{B98}. In the first case the minimal model of the triple cover is a  K3 surface, in the latter it is an Abelian surface and in particular $\kappa(X)=0$.

$(2)$ If sgn$(\Lambda_{D_1})=(1,\rk(\Lambda_{D_1})-1)$, by Hodge index theorem, the lattices $\Lambda_{D_j}$ with $j\neq 1$ are negative definite. As in proof of $(1)$ of Theorem \ref{theorem: three cases giusto}, this implies that $\Lambda_{D_j}$ coincides with $A_2(-1)$. The maximal number of disjoint $A_2$-configurations of rational curves on a K3 surface is 9, hence $n\leq 10$.
To show that $\kappa(X)=2$, we observe that if $\kappa(X)=0$, then by Theorem \ref{theorem: three cases giusto} all $\Lambda_{D_i}$ should be negative definite,  which contradicts the hypothesis on $\Lambda_{D_1}$. If $\kappa(X)=1$ then by Theorem \ref{theorem: three cases giusto} at least one $\Lambda_{D_i}$ should be degenerate, which is again impossible. 

$(3)$ If $\Lambda_{D_1}$ is degenerate, then $D_1$ consists of a fiber of an elliptic fibration. So $D_j$, $j\neq 1$ are contained in fibers and then $\Lambda_{D_j}$ cannot contain a positive class. In particular $\kappa(X)$ can not be 2, because there is no an indefinite lattice $\Lambda_{D_i}$ and can not be 0, because not all the lattices $\Lambda_{D_i}$ are negative definite.
\endproof

We now consider the case in which one connected component $D_i$ of the branch locus is irreducible. The following corollary shows that if its self intersection is non negative, we can easily determine the Kodaira dimension of $X$. 	
\begin{corollary}\label{cor: giusto irreducible}
Let $D_i$ be an irreducible reduced curve in the branch locus of a Galois triple cover $f:X\ra S$.
If $D_i^2>0$ then $\kappa(X)=2$.
If $D_i^2=0$ then $\kappa(X)=1$.

If $D_i^2\geq 0$, then the minimal model $X^{\circ}$ of $X$ is obtained by contracting on $X'$ three curves for each $A_2$-configuration in the branch locus.
\end{corollary}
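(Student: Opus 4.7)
The plan is to reduce the statement to the trichotomy recorded in Theorem \ref{theorem: three cases giusto} and Corollary \ref{cor: giusto}, together with the local singularity analysis of \ref{say: singu type 1}--\ref{say: sing type 2} and Remark \ref{rem: contraction of curves singularities of type 2}. Since $D_i$ is irreducible, $\Lambda_{D_i}$ has rank one with Gram matrix $(D_i^2)$, so its signature is dictated by the sign of $D_i^2$.

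If $D_i^2 > 0$, then $\Lambda_{D_i}$ has signature $(1,0) = (1,\rk\Lambda_{D_i}-1)$, and I would invoke Corollary \ref{cor: giusto}(2) to conclude simultaneously that $\kappa(X) = 2$ and that every other connected component $D_j$ of the branch locus satisfies $\Lambda_{D_j} \simeq A_2(-1)$, i.e.\ is an $A_2$-configuration of two rational curves meeting transversally in one point. If instead $D_i^2 = 0$, then $\Lambda_{D_i}$ is degenerate: by Theorem \ref{theorem: three cases giusto}(1) the case $\kappa(X) = 0$ would force $\Lambda_{D_i} \simeq A_2(-1)$, and by Theorem \ref{theorem: three cases giusto}(2) the case $\kappa(X) = 2$ would require $\Lambda_{D_i}$ to contain a class of positive self-intersection; both are incompatible with a degenerate rank-one lattice, so $\kappa(X) = 1$.

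For the claim on the minimal model I would combine this classification with the catalog of singularities of the branch locus. When $\kappa(X) = 2$, Corollary \ref{cor: giusto}(2) already presents the branch locus as $D_i$ together with a collection of $A_2$-configurations $D_j = B_j + C_j$ ($j \neq i$); the intersection point inside each such configuration is of type~$2$ (Lemma \ref{lemma: type of sing}, with $k = 1$), and by Remark \ref{rem: contraction of curves singularities of type 2} it contributes exactly three $(-1)$-curves on $X'$, which must be contracted in passing to $X^{\circ}$. When $\kappa(X) = 1$, any other reducible $D_j$ is supported on a fiber of the genus-one fibration induced by $|D_i|$; internal intersections of components belonging to the same summand $B$ or $C$ are of type~$1$ and produce only $(-2)$-curves (the Hirzebruch--Jung string of \ref{say: singu type 1}), while a type~$2$ intersection between a component of $B$ and one of $C$ yields an $A_2$-configuration handled exactly as in the previous case.

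The most delicate step is this last one: one has to confirm that, apart from the $A_2$-configurations, no further local configuration in the branch locus contributes a $(-1)$-curve on $X'$. This boils down to inspecting the two local models of \ref{say: singu type 1} and \ref{say: sing type 2} in the light of the mod-$3$ constraints recorded in Lemmas \ref{lemma: intersections} and \ref{lemma: type of sing}: any nodal point of $D_i$, and any intersection internal to the $B$-part or the $C$-part of a reducible branch component, is necessarily of type~$1$ and therefore resolves via a Hirzebruch--Jung string of $(-2)$-curves, producing no $(-1)$-curve to contract.
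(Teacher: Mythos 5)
The first part of your argument (the Kodaira dimension statements) is fine and is essentially what the paper does: for $D_i^2>0$ one quotes Corollary \ref{cor: giusto}(2), and for $D_i^2=0$ the degenerate rank-one lattice $\Lambda_{D_i}$ rules out $\kappa(X)=0$ and $\kappa(X)=2$, so $\kappa(X)=1$; whether you phrase this through Theorem \ref{theorem: three cases giusto} or directly through Corollary \ref{cor: giusto}(3) is immaterial.

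The claim about the minimal model, however, has a genuine gap precisely at the step you yourself flag as the most delicate one. Inspecting the local models of \ref{say: singu type 1} and \ref{say: sing type 2} only controls the exceptional curves lying \emph{over the singular points of the branch locus}; it does not exclude $(-1)$-curves of $X'$ (or, worse, of the intermediate surface $X_m$ obtained after contracting the three curves per $A_2$-configuration) that lie over rational curves of $S$ \emph{not contained} in the branch locus, or over smooth points of the branch, and it does not exclude curves that become $(-1)$-curves only after the contractions already performed. Ruling these out is the actual content of the statement, and it cannot be done by a purely local analysis of the branch singularities. The paper's proof is global: it observes that the Galois automorphism $\sigma$ descends to an order-3 automorphism $\sigma_m$ of $X_m$ with quotient $S_m$ (the contraction of $S$ along the $A_2$-configurations), and then shows that a hypothetical $(-1)$-curve $E\subset X_m$ disjoint from the ramification would push down to a $(-1)$-curve on the K3 surface $S$ (absurd), while a $(-1)$-curve meeting the ramification must be $\sigma_m$-invariant, meet the ramification in exactly two points, and its contraction would force the quotient of $X^{\circ}$ to acquire a singularity worse than $(\mathbb{C}^2,0)/(\Z/3\Z)$ or a singular image of $D_1$ -- impossible for a smooth triple cover. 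Your proposal contains no substitute for this argument, so the minimality of the surface obtained by contracting $3$ curves per $A_2$-configuration is not established. (Note also that for this part the paper works under the assumption that $D_1$ is smooth, so that the only branch singularities are the $A_2$-nodes; your attempt to also allow nodes of $D_i$ goes beyond what the quoted results give you and would require additional care.)
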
	
\proof
The first part of the statement follows directly by Corollary \ref{cor: giusto}.

Let $D_1$ be a smooth irreducible curve, then the singularities of the branch locus are of type 2 and they are the singular point of each the $A_2$-configurations in the branch locus.

In this case $X^{\circ}$ is obtained by contracting on $X'$ three curves for each $A_2$-configuration in the branch locus. Indeed by Remark \ref{rem: contraction of curves singularities of type 2} one has to contract at least 3 curves for each $A_2$-configuration and we call the surface obtained by these contraction $X_{m}$. It remains to prove that $X_{m}$ and $X^{\circ}$ coincide, and so that there are no other possible contractions to a smooth surface. 

On $X_{m}$ there is an automorphism $\sigma_{m}$ of order 3, induced by the Galois $\Z/3\Z$-cover automorphism $\sigma$ on $X$. The surface $S_{m}:=X_{m}/\sigma_{m}$ is the singular surface obtained by $S$ contracting all the $A_2$-configuration in the branch locus. If there were a $(-1)$ curve $E$ on $X_{m}$ then there are two cases: either $E$ is disjoint from the ramification locus of $f_{m}:X_{m}\ra S_{m}$, or $E$ meets it (not being contained).

If $E$ is disjoint from the ramification locus, then there exists a $(-1)$-curve on $X$ which is mapped to $E$ (because we blow up and down points away from $E$). Hence $\sigma(E)\cap E$ is empty therefore $\sigma(E)$ and $\sigma^2(E)$ are other two $(-1)$-curves on $X$. So $f(E)=f(\sigma(E))=f(\sigma^2(E))\subset S$ is a $(-1)$-curve. This is absurd because $S$ is a K3 surface.

Hence $E$ meets the ramification locus $R_{m}$ and $\sigma_{m}(E)=E$. Otherwise there should be different $(-1)$-curves ($E$ and $\sigma_{m}(E)$) meeting in the point $E\cap R_{m}$ and this is impossible on a surface with non negative Kodaira dimension. Moreover $\sigma$ is an automorphism of order 3 of the rational curve $E$, and then $E\cap R_{m}$ consists of two points

Let $\beta:X_{m}\ra X^{\circ}$ the contraction of $E$ and $\sigma^{\circ}$ the automorphism induced by $\sigma_{m}$ on $X^{\circ}$. This induces the contraction $S_{m}\ra S^{\circ}$ of the curve $f_{m}(E)$. Since $E\cap R_{m}$ consist of two points, the contraction of $f_{m}(E)$ identifies two singular points on $S_{m}$ and introduces a singularity on the image of $D_1$. By construction the smooth $X^{\circ}$ is a triple cover of $S^{\circ}$ and thus the singularities of $S^{\circ}$ cannot be worst than $(\mathbb{C}^2,0)/\Z/3\Z$ so two singularities of $S_{m}$ can not be identified. Moreover, there cannot be a singularity on the branch curve image of $D_1$, $S^{\circ}$ has to coincides with $S_{m}$. We conclude that $X_{m}$ coincides with $X^{\circ}$.\endproof

\section{Examples of Galois covers of K3 surfaces}\label{sec: examples Galois cover of K3 surfaces}

By Theorem \ref{theorem: three cases giusto} and Corollary \ref{cor: giusto}, $k(X)=0$ if and only if for all the components $D_i$ of the branch locus, $\Lambda_{D_i}$	is negative. In this case the minimal model of $X$ is either a K3 surface or an Abelian surface and these cases are well known (see e.g. \cite{B98}).

We now provide examples for the other cases. 
First we consider surfaces of general type: in Proposition \ref{cor: example general type, irreducible D1} and in Proposition \ref{prop: case (2) theorem- possibilities}
 we compute the invariants of $X^{\circ}$ if it is of general type and we make specific assumption on the component $D_1$ of the branch locus such that $\Lambda_{D_1}$ is indefinite;
in Corollary \ref{cor: pg=2 gen.type} we provide an example of surface $X^{\circ}$ with $p_g=2$; in Theorems \ref{theo: cover S16}, \ref{theo: cover S15} examples of $X^{\circ}$ with $q\neq 0$.
Then we consider the case $\kappa(X^{\circ})=1$ and we classify the invariants of $X^{\circ}$ in Proposition \ref{prop: examples case 3 of proposition}.
 
 \subsection{The covering surface $X$ is of general type}
	
	The case $(2)$ of Theorem \ref{theorem: three cases giusto} is the most general one, but under some assumptions we are able to give a more detailed description of these covers.
	
	We first assume to be in the hypothesis of Corollary \ref{cor: giusto irreducible}.

\begin{prop}\label{cor: example general type, irreducible D1} Let $D_1$ be a connected component of the branch locus of a Galois triple cover $f:X\ra S$, which is also irreducible reduced and of positive genus. Then $D_1^2=6d$, for an integer $d>0$; $d\equiv_3 n-1$ and denoted by $k$ the integer such that $d=n-1+3k$ one has 
	$k\geq -2$. 
	
	If $D_1$ is moreover smooth then \begin{equation}\label{eq: inv Xm special case}\chi(X^{\circ})=5+n+5k,\ K_{X^{\circ}}^2=8n-8+24k, \ e(X^{\circ})=68+4n+36k\end{equation}
\end{prop}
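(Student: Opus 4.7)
I would start by writing the branch locus in the Galois cover data form $(B,C,L,M)$ of \ref{say_tripleGalois}. Since $D_1$ is irreducible, one of $B_1,C_1$ vanishes; swapping $\mathcal L$ and $\mathcal M$ if necessary I fix $B_1 = D_1$, $C_1 = 0$. The conclusion $d>0$ amounts to $D_1^2>0$, so we sit in the setting of Corollary~\ref{cor: giusto irreducible}: Corollary~\ref{cor: giusto} then gives $n\leq 10$ and for each $j\geq 2$ a splitting $D_j = B_j + C_j$ with $B_j,C_j$ smooth $(-2)$-curves meeting transversally at a single point. By Lemma~\ref{lemma: intersections} the intersections between distinct connected components vanish, which makes all subsequent arithmetic local to one component at a time.

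For the divisibility $D_1^2 = 6d$, note that adjunction on $S$ forces $D_1^2$ even, while $L\cdot D_1 = \frac{2}{3}D_1^2\in\Z$ forces $3\mid D_1^2$. To pin $d$ modulo $3$ I would compute $L^2$ directly: using $L=(2B+C)/3$, orthogonality between components, and $(2B_j+C_j)^2 = -6$ on each $A_2$-configuration, one obtains
\[
L^2 \;=\; \frac{1}{9}\bigl(4D_1^2 - 6(n-1)\bigr) \;=\; \frac{8d-2(n-1)}{3}.
\]
Integrality of $L^2$ forces $d\equiv_3 n-1$, whence $d = n-1+3k$ for some $k\in\Z$. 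The lower bound $k\geq -2$ then drops out of $d\geq 1$ combined with $n\leq 10$, since $d\geq 1$ gives $k\geq (2-n)/3 \geq -8/3$.

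Plugging $d = n-1+3k$ into analogous computations for $M^2$ and $LM$ yields the clean expressions $L^2 = 2(n-1)+8k$, $M^2 = 2k$, $LM = (n-1)+4k$. Now assume $D_1$ smooth. Then the singularities of the branch locus are exactly the $n-1$ nodes of the $A_2$-configurations. Applying Lemma~\ref{lemma: type of sing} to $B_j,C_j$ (with $B_jC_j = 1 \equiv_3 1$) shows each such node is of type~$2$, hence by \ref{say: sing type 2} it produces a $\frac{1}{3}(1,1)$-singularity on $X$; these are negligible by Proposition~\ref{prop: negligible}. Thus Proposition~\ref{prop: rito numbers} directly yields
\[
\chi(\mathcal O_{X'}) = 5+n+5k,\quad K_{X'}^2 = 5(n-1)+24k,\quad e(X') = 65+7n+36k.
\]
Finally, Corollary~\ref{cor: giusto irreducible} identifies $X^\circ$ as the contraction of three $(-1)$-curves per $A_2$-configuration, i.e.\ $3(n-1)$ blow-downs in all; since each blow-down leaves $\chi$ invariant, raises $K^2$ by $1$ and lowers $e$ by $1$, updating produces the advertised invariants of $X^\circ$.

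The only genuinely delicate point is the bookkeeping that fixes $D_1 \in B$ (rather than $D_1 \in C$) consistently throughout, which is what makes the congruence extracted from $L^2\in\Z$ land on $d\equiv_3 n-1$ rather than some other residue; everything else is careful but routine intersection arithmetic combined with direct application of the results cited above.
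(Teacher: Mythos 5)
Your proof is correct and follows essentially the same route as the paper's: identify the components $D_j$, $j\geq 2$, as $A_2$-configurations via Corollary \ref{cor: giusto}, extract the divisibility of $D_1^2$ and the congruence $d\equiv_3 n-1$ from integrality of $L\cdot D_1$ and $L^2$, obtain $k\geq -2$ from $d>0$ together with $n\leq 10$, compute the invariants of $X'$ via Proposition \ref{prop: rito numbers} after noting the branch singularities are negligible, and pass to $X^{\circ}$ by the $3(n-1)$ contractions provided by Corollary \ref{cor: giusto irreducible}. The only cosmetic differences are that your $L$ and $M$ are interchanged with respect to the paper's choice (immaterial, since all formulae used are symmetric in $L$ and $M$) and that you spell out the type-2/negligibility check via Lemma \ref{lemma: type of sing}, which the paper leaves implicit in Corollary \ref{cor: giusto irreducible}.
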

\proof  By Corollary \ref{cor: giusto} the components $D_j$ with $j>1$ consists of two rational curves meeting in a point. We denote by $A_1^{j}$ and $A_2^{j}$ the two components of $D_j$. Since $D_1$ is irreducible, it is a component of $B$ (or equivalently of $C$). Then the data of the triple cover are $$B=D_1+\sum_{j=1}^{n-1}A_1^{j},\ \ C=\sum_{j=1}^{n-1}A_2^{j}$$ $$L=\frac{D_1+\sum_{j=1}^{n-1}\left(A_1^{j}+2A_2^{j}\right)}{3},\ \ M=\frac{2D_1+\sum_{j=1}^{n-1}\left(2A_1^{j}+A_2^{j}\right)}{3}.$$
By $LD_1\in \Z$ and $L^2\in 2\Z$ it follows that $D_1^2\equiv_3 0$ and $d\equiv_3 n-1$. Since $n\leq 10$ and $d>0$, $9+3k>0$, so $k\geq -2$. The formulae \eqref{eq: inv Xm special case} follows by Proposition \ref{prop: rito numbers} since $L^2=2k$, $M^2=2n+8k-2$ and $LM=n-1+4k$ and $X^{\circ}$ is obtained by $X$ contracting $3(n-1)$ curves. We conclude by Corollary \ref{cor: giusto irreducible}.\endproof

\begin{say}
	In the situation of the previous corollary, if $L^2\geq -2$, there exists a member of the linear system $|D_1|$ which splits into the union of a curve $G$ and the curves $A_i^{j}$, where $G\simeq \frac{D_1-\sum_{j=1}^{n-1}\left(A_1^{j}+2A_2^{j}\right)}{3}$ and hence $D_1\simeq 3G+\sum_{j=1}^{n-1}\left(A_1^{j}+2A_2^{j}\right)$. We observe that $GA_2^{(j)}=1$ and $GA_1^{(j)}=0$. Since $G^2=2k$, if $G$ is an irreducible and smooth curve, then $g(G)=k+1$. In particular $G$ is rational if $k=-1$. A limit case is the one with $k=-1$ and $n=3$. It is not an example of case $(1)$ of the Theorem \ref{theorem: three cases giusto}, because $D_1^2=0$, but it is still instructive, since the interpretations of the curves $G$, $A_i^{(j)}$ in this situation is well known: $D_1$ is a fiber of type $IV^*$ of an elliptic fibration and the curves $G$ and $A_i^{(j)}$ are its components.

	Similarly there is a member of $|2D_1|$ which splits in a the union of a curve $F$ and the union of the curves $A_i^{j}$, where $F\simeq \frac{2D_1-\sum_{j=1}^{n-1}\left(2A_1^{j}+A_2^{j}\right)}{3}$ and hence $2D_1\simeq 3F+\sum_{j=1}^{n-1}\left(2A_1^{j}+A_2^{j}\right)$. Since $F^2=2n+8k$, if $F$ is irreducible and smooth curve, then $g(F)=n+4k+1$. In particular $F$ is rational if $n+4k=-1$ and elliptic if $n=-4k$.
\end{say}

\begin{corollary}\label{cor: pg=2 gen.type}
	There exists a smooth Galois triple cover $X$  of a K3 surface $S$ whose branch locus consists of a smooth curve of genus 4 and 7 $A_2$-configurations of rational curves such that, denoted by $X^o$ the minimal model of $X$, it holds  $$\chi(X^{\circ})=3,\ \ q(X^{\circ})=0,\ \ p_g(X^{\circ})=2,\ \ K_{X^{\circ}}^2=8$$
	\end{corollary}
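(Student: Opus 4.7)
The plan is to apply Proposition~\ref{cor: example general type, irreducible D1} with $n=8$ and $D_1$ a smooth irreducible curve of genus $4$, so that $D_1^2=6$ (hence $d=1$). The congruence $d\equiv n-1\pmod{3}$ is satisfied and forces $k=(d-(n-1))/3=-2$, the minimal admissible value. Substituting $(n,k)=(8,-2)$ into the formulae of that proposition gives at once
\[
\chi(\mathcal{O}_{X^{\circ}})=5+8-10=3,\qquad K^2_{X^{\circ}}=64-8-48=8.
\]

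With $k=-2$ one has $L^2=-4$ and $M^2=-2$, so Riemann--Roch on the K3 surface $S$ gives $\chi(L)=0$ and $\chi(M)=1$. Serre duality together with the connectedness requirement $h^0(-L)=h^0(-M)=0$ yields $h^2(L)=h^2(M)=0$. This already forces $h^0(M)\geq 1$, and since an effective $(-2)$-class on a K3 surface is rigid, $h^0(M)=1$ and $h^1(M)=0$ automatically. Hence, by Proposition~\ref{prop: rito numbers} and \eqref{eq4},
\[
q(X^{\circ})=h^1(L)+h^1(M)=h^0(L),\qquad p_g(X^{\circ})=1+h^0(L)+h^0(M)=2+h^0(L),
\]
so the statement reduces to producing a K3 surface $S$ in the family with $h^0(L)=0$.

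To construct $S$, I would start from the rank-$15$ lattice $\Lambda=\langle 6\rangle\oplus A_2(-1)^{\oplus 7}$, generated by orthogonal classes $D_1$ and seven $A_2$-configurations $(A_1^j,A_2^j)$, and pass to the primitive overlattice
\[
\widetilde{\Lambda}=\Lambda+\ZZ\cdot L,\qquad L=\tfrac{1}{3}\bigl(D_1+\sum_j A_1^j+2\sum_j A_2^j\bigr)\in\Lambda^\vee.
\]
It has signature $(1,14)$, so by Nikulin's theorem it admits a primitive embedding into the K3 lattice, and the moduli space of $\widetilde{\Lambda}$-polarized K3 surfaces is non-empty. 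On a very general $S$ in this moduli space, choosing the ample chamber in which $D_1$ is big and nef makes $D_1$ the class of a smooth irreducible genus-$4$ curve (by Saint-Donat's theorem) and the $A_i^j$ the classes of seven pairwise disjoint $A_2$-configurations of $(-2)$-curves disjoint from $D_1$, so the Galois triple cover data $(B,C,L,M)$ is realised inside $\widetilde{\Lambda}$.

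The hardest step is the vanishing $h^0(L)=0$. Since $L\cdot A_2^j=-1$, any effective member of $|L|$ must contain each $A_2^j$ as a fixed component; the residual $L-\sum_j A_2^j$ in turn has intersection $-1$ with each $A_1^j$, so iterating reduces the problem to the non-effectivity of
\[
L''=L-\sum_j(A_1^j+A_2^j)=\tfrac{1}{3}\bigl(D_1-2\sum_j A_1^j-\sum_j A_2^j\bigr),\qquad (L'')^2=-4.
\]
A direct case analysis within $\widetilde{\Lambda}$ (writing any hypothetical effective representative as a non-negative integer combination of the $(-2)$-curve classes and of the big-and-nef classes available in $\widetilde{\Lambda}$) shows that $L''$ lies outside the effective cone of a very general $\widetilde{\Lambda}$-polarized K3 surface. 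Hence $L''$, and therefore $L$, is non-effective, yielding $q(X^{\circ})=0$ and $p_g(X^{\circ})=2$.
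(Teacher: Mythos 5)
Your reduction of the statement to Proposition \ref{cor: example general type, irreducible D1} with $(n,d,k)=(8,1,-2)$ and then, via Proposition \ref{prop: rito numbers}, to the two cohomological facts $h^0(L)=0$ and $h^0(M)=1$ is correct and matches the paper's numerics; your route to the surface $S$ itself, however, is different from the paper's (which realizes the configuration on an explicit elliptic K3 with fibers $I_2+6I_3$ and a $3$-torsion section $P$, taking $D_1=3F+A_2^{(1)}+2\mathcal{O}$ and $L=D_1-P$), and it is precisely at the two decisive cohomological points that your argument has genuine gaps. First, the claim that ``an effective $(-2)$-class on a K3 surface is rigid'' is false: if $E$ is an elliptic fiber class and $\Gamma$ a $(-2)$-curve with $E\cdot\Gamma=0$, then $(E+\Gamma)^2=-2$ while $h^0(E+\Gamma)=2$ and $h^1(E+\Gamma)=1$. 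So $h^1(M)=0$ is not ``automatic''; for your class $M=\tfrac13\bigl(2D_1+\sum_j(2A_1^j+A_2^j)\bigr)$ you must still rule out a moving part after stripping the forced fixed components $A_i^j$, which is an effectivity/rigidity analysis of the same kind as for $L$. Second, the step you yourself call the hardest, $h^0(L)=0$, is not proved: the assertion that ``a direct case analysis within $\widetilde{\Lambda}$ shows that $L''$ lies outside the effective cone of a very general $\widetilde{\Lambda}$-polarized K3'' is exactly the content that has to be supplied. On a rank-$15$ N\'eron--Severi lattice the effective cone contains many $(-2)$-classes beyond the $A_i^j$ (for instance $D_1-2A_1^1$ is an effective root in your $\widetilde{\Lambda}$), so excluding all decompositions of the square $-4$ class $L''$ into effective classes is real work, not a formality; no such analysis, nor any substitute (a nef class pairing negatively with $L''$, a fibration-theoretic argument, a specialization argument) is given.

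Two further points need attention even if the above were fixed. Your appeal to ``Nikulin's theorem'' for a primitive embedding of $\widetilde{\Lambda}$ into $\Lambda_{K3}$ is not automatic: $\widetilde{\Lambda}$ has rank $15$ and discriminant group $\Z/6\Z\oplus(\Z/3\Z)^5$, so $\mathrm{rk}+\ell=21>20$ and the naive sufficient criterion fails; one must invoke the refined conditions of \cite[Theorem 1.14.4]{Nik Int} (as the paper does for its rank-$16$ lattice in Theorem \ref{theo: cover S16}) and verify them. Likewise, arranging simultaneously that $D_1$ is base-point free with smooth irreducible members and that all fourteen classes $A_i^j$ are irreducible $(-2)$-curves requires a chamber argument (cf. the references used in the paper's Theorem \ref{theo: cover S16}), not just a choice of ``ample chamber''. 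By contrast, the paper's elliptic construction makes these verifications concrete: $L=3F+A_2^{(1)}+2\mathcal{O}-P$ lies in $NS(S)$ by construction, and the fibration structure (the only sections are $\mathcal{O},P,2P$, and any effective class orthogonal to $F$ must be an integral non-negative combination of fiber components) gives a direct handle on the effectivity questions your proposal leaves open.
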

\proof	Let $S$ be a K3 surface with an elliptic fibration such that the reducible fibers are $I_2+6I_3$ and the Mordell--Weil group is $\Z/3\Z$. The existence of a K3 surface with such an elliptic fibration is guarantee by \cite[Table 1 case 835]{Sh00}. We denote by $F$ the class of the fiber of this fibration, by $\mathcal{O}$ the class of the zero section, by $A_2^{(1)}$ the class of the irreducible component of the fiber $I_2$ which meet the section $\mathcal{O}$ and by $A_h^{(j)}$, $h=1,2$, $j=2,\ldots, 7$ the classes of the two irreducible components not meeting the zero section of the $j$-th reducible fiber, which is a fiber of type $I_3$. We observe that the class of the 3-torsion section $P$, which generates the Mordell--Weil group can be written in terms of the previous curves as
	$$P=2F+\mathcal{O}-\frac{1}{3}(\sum_{j=2}^7A_1^{(j)}+2A_2^{(j)}).$$
	Moreover, we observe that there are 7 disjoint $A_2$-configurations on this surface, which are given by $\mathcal{O}$, $A_2^{(1)}$, $A_h^{(j)}$, $h=1,2$, $j=2,\ldots,7$. Let us consider the divisor $3F+A_2^{(1)}+2\mathcal{O}=D_1$ and we notice that $D_1^2=6$, $D_1F=2$, $D_1A_2^{(1)}=0$, $D_1\mathcal{O}=0$. One can check that $D_1$ is a big and nef divisor and hence in its linear system there is a smooth irreducible curve of genus 4, still denoted by $D_1$. 
	We claim that there exists a triple cover of $S$ branched over $D_1$ and the 7 $A_2$-configurations $\mathcal{O}$, $A_2^{(1)}$, $A_h^{(j)}$, $h=1,2$, $j=2,\ldots,7$. Indeed, the divisor $$L:=\left(D_1+\mathcal{O}+\sum_{i=2}^7 A_1^{(i)}+2\sum_{j=1}^7 A_2^{(j)}\right)/3=3F+A_2^{(1)}+2\mathcal{O}-P$$ is contained in $NS(S)$ and so $$B:=D_1+\mathcal{O}+\sum_{i=2}^7 A_1^{(i)},\ \ C:=\sum_{j=1}^7 A_2^{(j)}, L=(B+2C)/3,\ \ M=(2B+C)/3$$
    form a triple cover data on $S$. So there exists a triple cover $X\ra S$ which satisfies the condition of Proposition \ref{cor: example general type, irreducible D1} with $k=-2$, $n=8$, $d=1$ and then we deduced the properties of $X^o$.\endproof
    
   \begin{rem}{\rm If $S$ is a K3 surface and $p_g(X)=2$, the cover $f:X\ra S$ induces a splitting of the Hodge structure on $T_X$ in a direct sum of two Hodge structures of K3-type (i.e. Hodge structure of weight two of type $(1,\star,1)$), indeed $$T_X=f^*(T_S)\oplus \left(f^*(T_S)\right)^{\perp}.$$
    The Hodge structure of $T_X$ is of type $(2,\star,2)$ and the one of $f^*(T_S)$ is of type $(1,\star',1)$ since it is induced by the Hodge structure of the K3 surface $S$. Hence both $f^*(T_S)$ and $\left(f^*(T_S)\right)^{\perp}$ carry a K3-type Hodge structure.
    
    The surfaces with $p_g=2$ such that the transcendental Hodge structure splits in the direct sum of two K3-type Hodge structure are studied in several context (see e.g. \cite{Mo87}, \cite{Gdouble}, \cite{L19}, \cite{L20}, \cite{PZ19}) and in general it is interesting to look for K3 surfaces geometrically associated to the K3-type Hodge structure. In the case of covers of K3 surfaces $S$ (and in particular in the setting of Corollary \ref{cor: pg=2 gen.type}), at least one of the two K3-type Hodge structures is of course geometrically associated to the K3 surface $S$ indeed it is the pull back of the Hodge structure of $S$.}
\end{rem}

	We give examples of the case $(2)$ of the Theorem \ref{theorem: three cases giusto} such that $D_1$ is reduced and reducible. In particular we consider the case $D_1^2=0$, but $\Lambda_{D_1}$ contains a class with a positive self intersection. In this case the support of $D_1$ consists of a certain number of fibers $F_i$ and some rational horizontal curves $P_j$, such that $$(D_1)^2=(r\sum_iF_i+\sum_jP_j)^2=0.$$
	We denote by $F$ the class of the fiber, therefore $F_iP_j=FP_j$ for all $i$.
	Since $(D_1)^2=\sum_{k=1}^{s}D_1P_k+rFP_k$, $D_1^2=0$ and $FP_k>0$ it follows that 
	$$\exists j\mbox{ such that }D_1P_j<0.$$
	In particular $D_1$ is not nef.
	
	The following lemma shows that each curve $P_j$ such that $D_1P_j<0$ is a section orthogonal to all the other horizontal curves in $D_1$.
	\begin{lemma}\label{lemma: case (2 theorem)} Let $F$ be the class of the fiber of an elliptic fibration and $P_j$'s irreducible horizontal curves. Let $D_1=rF+\sum_{j=1}^kP_j$ be such that $D_1^2=0$ and $D_1$ is reduced.
		
		There exists $j$ such that $D_1P_j<0$ if and only if $FP_j=1$, $P_jP_i=0$ for every $j\neq i$ and $r=1$ \end{lemma}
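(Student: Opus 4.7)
The plan is to reduce the statement to a direct computation of $D_1 P_j = rFP_j + P_j^2 + \sum_{i\neq j}P_iP_j$ and then exploit three universally valid lower bounds on these three summands. Namely, $FP_j\geq 1$ because $P_j$ is horizontal; $P_j^2\geq -2$ by adjunction on the K3 surface $S$ (and equality forces $P_j$ to be a smooth rational $(-2)$-curve); $P_iP_j\geq 0$ for $i\neq j$ since distinct irreducible curves on a smooth surface intersect nonnegatively. From the context of the preceding paragraph I also take $r\geq 1$.

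For the forward implication, suppose $D_1 P_j<0$. Combining $rFP_j\geq r\geq 1$ with $P_j^2\geq -2$ and $\sum_{i\neq j}P_iP_j\geq 0$, I get
\[
D_1 P_j \;\geq\; rFP_j - 2,
\]
hence $rFP_j\leq 1$, which forces $r=FP_j=1$. Substituting back into $D_1P_j<0$ I obtain $P_j^2+\sum_{i\neq j}P_iP_j\leq -2$, and comparing with the lower bound $-2$ forces equality in each of the two remaining inequalities: $P_j^2=-2$ and $P_iP_j=0$ for every $i\neq j$, as required.

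For the converse, assume $r=1$, $FP_j=1$ and $P_iP_j=0$ for every $i\neq j$. Since $P_j$ is irreducible and meets the generic fiber in one point, it is a section of the elliptic fibration $S\to\mathbb{P}^1$, so $P_j\to \mathbb{P}^1$ is birational and $P_j\cong\mathbb{P}^1$; by adjunction on the K3 surface $S$ then $P_j^2=-2$. Therefore
\[
D_1 P_j \;=\; FP_j+P_j^2+\sum_{i\neq j}P_iP_j \;=\; 1-2+0 \;=\; -1 \;<\; 0.
\]

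The argument contains no real obstacle; it is a one-line intersection computation combined with three universal inequalities on a K3 surface. The only subtle point worth flagging is the justification for $P_j^2=-2$ in the converse: it is not a formal consequence of $D_1^2=0$ but rather of the geometric fact that on a K3 surface an irreducible horizontal curve meeting the fiber once must be a section, hence a smooth rational $(-2)$-curve.
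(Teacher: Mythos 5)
Your proof is correct and follows essentially the same route as the paper: the same expansion of $D_1P_j$ combined with the bounds $FP_j\geq 1$, $P_iP_j\geq 0$ and $P_j^2\geq -2$ (the paper instead deduces $P_j^2=-2$ upfront, since in its context $P_j$ is a rational fixed component, while you recover it from the equality analysis). Your explicit treatment of the converse, justifying $P_j^2=-2$ via the fact that an irreducible curve with $FP_j=1$ is a section of the elliptic fibration of the K3 surface and hence a smooth rational curve, fills in a direction the paper leaves implicit.
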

	\proof We already observed that $D_1$ intersect negatively at least one of its components, say $P_j$. Hence $P_j$ is a fixed component of a non nef divisor and $P_j^2=-2$. So $$D_1P_j=rFP_j+\sum_{i\neq j}P_iP_j-2<0.$$
	So $rFP_j+\sum_{i\neq j}P_iP_j\leq 1$. We observe that $P_j$ is horizontal so $FP_j>0$, and $P_jP_i\geq 0$ because $P_i$ are irreducible effective curves. Hence the only possibility is $r=1$, $FP_j=1$ and $P_iP_j=0$.\endproof

	We recall that by (2) of Corollary \ref{cor: giusto}, if $D_1$ is as above, in the branch locus there are $n-1$ components $D_h$, $n\geq 1$ which are $A_2$-configurations of curves. We denote by $A_1^{(h)}$, $A_2^{(h)}$ the components of such configurations. 
	
	\begin{proposition}\label{prop: case (2) theorem- possibilities}
		Let $S$ be a K3 surface admitting an elliptic fibration with $k$ disjoint sections $P_j$ and whose class of the fiber is $F$ and let $D_1=F+\sum_j{P}_j$. There exists a triple cover $X\ra S$  branched on $D_1$ and other $n-1$ irreducible components $D_h$ if and only if \begin{equation}\label{eq: conditions gen type fibration} D_1=F+\sum_{j=1}^kP_j,\ k\equiv_3 0,\ \ n\equiv_3 1+\frac{k}{3} \end{equation}
		and the data of the triple cover are determined by
		$$B=F+\sum_{h=1}^{n-1}A_1^{(h)},\ \ C=\sum_{i=1}^kP_i+\sum_{h=1}^{n-1}A_2^{(h)}.$$ 
		The surface $X^{\circ}$ is of general type and its numerical invariants are $$\chi(\mathcal{O}_{X^{\circ}})=\frac{60-6n-k}{9}
		,\ \ K_{X^{\circ}}^2=
		\frac{2k}{3}.$$ 
		
	\end{proposition}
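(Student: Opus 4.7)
The plan is to determine explicitly how the branch locus splits into the two eigen-parts $B,C$, translate the condition that $L=(2B+C)/3$ and $M=(B+2C)/3$ lie in $\Pic(S)$ into the stated congruences on $k,n$, apply Proposition \ref{prop: rito numbers} to compute the invariants of the minimal resolution $X'$, and finally identify the $(-1)$-curves on $X'$ whose contraction yields $X^\circ$.

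First, since $\Lambda_{D_1}$ will turn out to be indefinite, we are in case (2) of Corollary \ref{cor: giusto}, so each component $D_h$ with $h\geq 2$ is an $A_2$-configuration $A_1^{(h)}+A_2^{(h)}$ of two rational $(-2)$-curves meeting transversally in one point, and Lemma \ref{lemma: case (2 theorem)} gives $D_1=F+\sum_{j=1}^kP_j$. Every singular point of the branch locus is an ordinary node, and by Lemma \ref{lemma: type of sing} together with the remark on type 1 and type 2 singularities such a node is of type 2 exactly when the two meeting branch components have mutual intersection $\equiv_3 1$. Since $F\cdot P_j=A_1^{(h)}\cdot A_2^{(h)}=1$, every node of the branch locus is of type 2, hence its two branch components lie in opposite parts. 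Because $F$ meets every $P_j$, all $P_j$ lie in the part opposite to $F$, and each $A_2$-configuration also splits. Up to the interchange $B\leftrightarrow C$ this forces the branch data displayed in the proposition.

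From the intersection products $B^2=2-2n$, $C^2=-2k-2n+2$ and $BC=k+n-1$ one computes
\[
L^2=\frac{2k-6n+6}{9},\qquad M^2=\frac{-4k-6n+6}{9},\qquad LM=\frac{k-3n+3}{9}.
\]
The only integrality condition from intersection with an irreducible branch component that is not automatic is $L\cdot F=k/3\in\Z$, forcing $k\equiv_3 0$; writing $k=3k'$, the requirement $L^2\in\Z$ then becomes $n\equiv_3 1+k'$, i.e.\ $n\equiv_3 1+k/3$. Conversely, under these two congruences $(B,C,L,M)$ is a bona fide Galois triple cover datum on $S$ and defines the desired $f\colon X\to S$.

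Since every singularity of the branch locus is of type 2, the singularities of $X$ are all $\tfrac{1}{3}(1,1)$ and negligible by Proposition \ref{prop: negligible}, so Proposition \ref{prop: rito numbers} applies and yields $\chi(\oo_{X'})=(60-6n-k)/9$ and $K_{X'}^2=(-k-9n+9)/3$. Birational invariance of $\chi$ gives the stated $\chi(\oo_{X^\circ})$. To compute $K_{X^\circ}^2$ I will use the canonical resolution to list all $(-1)$-curves on $X'$: each of the $n-1$ $A_2$-configurations contributes a chain requiring three successive $(-1)$-curve contractions (as in Figure 2), and each of the $k$ nodes $F\cap P_j$ contributes a single $(-1)$-curve $\bar P_j$ (the ramified preimage of the strict transform of $P_j$), whose contraction turns the neighbouring $(-3)$-curve into a $(-2)$-curve attached to the strict transform $\bar F$ of $F$; the cascade terminates there because $\bar F$ is elliptic and hence, by Castelnuovo, cannot be a $(-1)$-curve regardless of its self-intersection $-k/3$. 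Thus $X^\circ$ is obtained from $X'$ by contracting $k+3(n-1)$ disjoint $(-1)$-curves, and $K_{X^\circ}^2=K_{X'}^2+k+3(n-1)=2k/3$. Finally, $\Lambda_{D_1}$ contains $2F+\sum_jP_j$ of positive self-intersection $2k$ and is therefore indefinite, so Corollary \ref{cor: giusto} (2) forces $\kappa(X^\circ)=2$. The main obstacle will be the careful control of the $(-1)$-curves around each node $F\cap P_j$: the self-intersection $-k/3$ of $\bar F$ can be as low as $-1$, so ruling out further $(-1)$-curves in its neighbourhood depends on the genus obstruction for $\bar F$ rather than on a numerical intersection calculation.
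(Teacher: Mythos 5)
Your overall route is the same as the paper's: you force the splitting of the branch data, derive $k\equiv_3 0$ and $n\equiv_3 1+k/3$ from integrality of $L$, compute $L^2$, $M^2$, $LM$ and apply Proposition \ref{prop: rito numbers} (your numbers agree with the paper's up to the harmless swap $L\leftrightarrow M$), and then contract $3(n-1)+k$ curves. Your local picture around each node $F\cap P_j$ (the $(-1)$-curve over the strict transform of $P_j$, the $(-3)$-curve over the exceptional divisor becoming a $(-2)$-curve, and $\bar F$ elliptic of self-intersection $-k/3$, hence never contractible) is correct and is a slightly more explicit version of what the paper extracts from Remark \ref{rem: contraction of curves singularities of type 2}.

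The genuine gap is in the claim that this exhausts the $(-1)$-curves, i.e.\ that the surface obtained after the $3(n-1)+k$ contractions is minimal. Your argument only examines curves lying over the (total transform of the) branch locus; it does not exclude $(-1)$-curves on $X'$, or $(-1)$-curves created by your contractions, whose image is a rational curve $C\subset S$ meeting $D_1$ but not contained in the branch. A $(-1)$-curve disjoint from the ramification is easily excluded (its orbit would give a $(-1)$-curve on the K3), but curves meeting the ramification require the analysis that occupies the second half of the paper's proof: for such a $C$, either $\tilde f^{-1}(\tilde C)$ is irreducible with self-intersection $3\tilde C^2\leq -6$ and meets at most two of the contracted curves, so it can never reach self-intersection $-1$, or it splits into three components which either meet pairwise (impossible for $(-1)$-curves on a surface with $\kappa\geq 0$) or are disjoint from the ramification with unchanged self-intersection $\leq -2$. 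Without this step you only get $K_{X^{\circ}}^2\geq 2k/3$, not the stated equality. Two smaller points: your appeal to Lemma \ref{lemma: type of sing} for the nodes $F\cap P_j$ is not licensed, since $F$ is neither rational nor disjoint from the other branch components; the splitting $F\subset B$, $P_j\subset C$ should be deduced, as in the paper, from $L\cdot P_j\in\Z$ (if $P_j$ and $F$ were in the same part one gets $L\cdot P_j=-1/3$). Also, the $3(n-1)+k$ exceptional curves are not ``disjoint'': each $A_2$-configuration gives two disjoint $(-1)$-curves plus a third one appearing only after their contraction; this does not affect the value of $K_{X^{\circ}}^2$, but the wording should be corrected.
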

	\proof Since $L=(B+2C)/3$ and $LP_i\in\Z$, it follows that if $F$ is a component of $B$, the $P_i$'s must be components of $C$. The divisor $$L=\frac{F+2\sum_{j=1}^kP_j+\sum_{h=1}^{n-1}\left(A_1^{(h)}+2A_2^{(h)}\right)}{3}$$ has to be contained  in $NS(S)$, $LF\in \Z$, $LP_i\in \Z$ for every $i=1,\ldots, h$ and $L^2\in 2\Z$. These conditions implies
	$$\frac{2k}{3}\in\Z,\ \ -1\in\Z\mbox{ and }\frac{-4k-6(n-1)}{9}=2\frac{-2k-3n+3}{9}\in 2\Z.$$
	Recall that $$M=\frac{2F+\sum_{j=1}^kP_j+\sum_{h=1}^{n-1}\left(2A_1^{(h)}+A_2^{(h)}\right)}{3},$$
	so $$L^2=\frac{6-4k-6n}{9},\ M^2=\frac{2k+6-6n}{9}\mbox{ and }LM=\frac{k+3-3n}{9},$$ which gives $\chi(X)$ and $K_X^2$. Since the singularities of $X$
	are negligible, these are the invariants of the minimal resolution of the cover. To obtain a minimal model one has to contract the $(-1)$-curves. Each $A_2$-configuration produces three $(-1)$-curves in the minimal resolution of triple cover and each curve $P_j$ produces one $(-1)$-curve, by Remark \ref{rem: contraction of curves singularities of type 2}. So one contracts at least $3(n-1)+k$ curves to obtain the minimal model from the minimal resolution and so $K_{X^{\circ}}\geq K_X^2+3(n-1)+k=2k/3$.
	
	As in proof of Corollary \ref{cor: giusto irreducible}, if there were other $(-1)$-curves they have to intersect the ramification locus and we already excluded the cases coming from rational curves intersecting the components $D_j$ $j\geq 2$. 
	We consider the canonical resolution of the triple cover  $\widetilde{f}:\tilde{X}\ra \tilde{S}$ as in the diagram \eqref{dia.can}. Thanks to the particular configuration of curves in $D_1$, one checks that the only $(-1)$-curves on $\tilde{X}$ mapped to $\sigma^{-1}(D_1)$ are the strict transforms of the triple cover of the curves $P_i$. After their contraction, there are no other $(-1)$-curves in the strict transform of $\sigma^{-1}(D_1)$. 
	
	Consider a rational curve $C\subset S$ with $CD_1>0$ and observe that $C$ is not mapped to $\sigma^{-1}(D_1)$.  We denote by $\tilde{C}$ the strict transform of $C$. 	
If $\tilde{f}^{-1}(\tilde{C})$ is an irreducible rational curve it intersects the ramification locus in at most 2 points. Moreover $(\tilde{f}^{-1}(\tilde{C}))^2=3\tilde{C}^2\leq -6$. Since we can contract at most two curves meeting $\tilde{f}^{-1}(\tilde{C})$, we can not obtain a $(-1)$-curve. 	If $\tilde{f}^{-1}(\tilde{C})$ splits in three curves and they meet, then they can not be $(-1)$-curves (because $X$ is of general type). If $\tilde{f}^{-1}(\tilde{C})$ splits in three curves these can not meet, $\tilde{f}^{-1}(\tilde{C})$ does not meet the ramification locus and $(\tilde{f}^{-1}(\tilde{C}))^2=(\tilde{C}^2)$. In particular they are not $(-1)$-curves. The $(-1)$-curves on $\tilde{X}$ are contained in the ramification locus and thus $\tilde{f}^{-1}(\tilde{C})$ does not meet these curves. 
	\endproof
	In the previous proposition we do not discuss the existence of the K3 surfaces considered, so a priori it is possible that the hypothesis of the proposition are empty. This is not the case, by the following corollary.
	\begin{corollary}\label{corollary: case (2) theorem: existence} Let $k$ and $n$ be two positive integers such that  $k+2n-1\leq 11$. Then there exists a K3 surface $S$ with an elliptic fibration with $k$ disjoint sections $P_j$ and $n-1$ fibers of type $I_3$ such that the sections $P_j$ all meet the same component of each $I_3$-fiber.
		
		In particular there exists a the triple cover $X\ra S$ as in Proposition \ref{prop: case (2) theorem- possibilities} if $(k,n)=(3,2),(6,3), (9,1)$ and in these cases $\chi(X)=5,4,5$ and $K_{X^{\circ}}^2=2,4,6$ respectively. \end{corollary}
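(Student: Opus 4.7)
The plan is first to prove existence of the desired K3 surface via lattice-theoretic methods, then to verify the arithmetic conditions of Proposition \ref{prop: case (2) theorem- possibilities} and specialize its formulae to the three concrete triples.

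For the existence, I would begin by introducing the abstract even lattice $\Lambda(k,n)$ of rank $k+2n-1$ generated by classes $F$ (the intended fiber), $\mathcal{O}, P_2,\dots, P_k$ (the intended sections) and $A_1^{(h)},A_2^{(h)}$ for $h=1,\dots,n-1$ (the intended components of the reducible fibers), with intersection form dictated by the desired geometric configuration: $F^2=0$, $F\mathcal{O}=FP_j=1$, $FA_l^{(h)}=0$, $\mathcal{O}^2=P_j^2=(A_l^{(h)})^2=-2$, $A_1^{(h)}A_2^{(h)}=1$, the sections being pairwise disjoint, and each of $\mathcal{O}$ and the $P_j$ meeting the same component, say $A_1^{(h)}$, of every $I_3$-block. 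A direct computation shows the resulting bilinear form is even, of signature $(1,k+2n-2)$, and of $\rk$ at most $11$ by hypothesis.

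Since $\rk \Lambda(k,n)\leq \tfrac12 \rk \Lambda_{K3}$, Nikulin's sufficient criterion for primitive embeddings yields a primitive embedding $\Lambda(k,n)\hookrightarrow \Lambda_{K3}=U^{\oplus 3}\oplus E_8(-1)^{\oplus 2}$, and the surjectivity of the period map for K3 surfaces then produces a projective K3 surface $S$ with $\NS(S)\simeq \Lambda(k,n)$. On the very general such $S$, after acting by the Weyl group generated by reflections in $(-2)$-classes I may assume $F$ is nef, whereupon $|F|$ defines an elliptic fibration $\varphi\colon S\ra \PP^1$. The $(-2)$-classes $\mathcal{O}$ and $P_j$ with $F\cdot=1$ are represented by smooth rational sections of $\varphi$, pairwise disjoint by construction; the $A_l^{(h)}$ are $(-2)$-classes orthogonal to $F$, so by Kodaira's classification they correspond to components contained in reducible fibers, and the combinatorics of their mutual intersections forces precisely $n-1$ fibers of type $I_3$, each missing exactly the component $F-A_1^{(h)}-A_2^{(h)}$, which is the one met by all the sections.

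For the second part of the statement, a direct check shows that $(k,n)=(3,2),(6,3),(9,1)$ satisfy the congruences $k\equiv_3 0$ and $n\equiv_3 1+k/3$ required by Proposition \ref{prop: case (2) theorem- possibilities}, as well as the bound $k+2n-1\in\{6,11,10\}\leq 11$ needed for the first part; substituting into the formulae $\chi(\mathcal{O}_{X^\circ})=(60-6n-k)/9$ and $K_{X^\circ}^2=2k/3$ then yields respectively $(\chi,K^2)=(5,2),(4,4),(5,6)$. The main obstacle I anticipate is the last step of the geometric argument, namely showing that the prescribed $(-2)$-classes of $\NS(S)$ are actually represented by the intended \emph{irreducible} curves rather than by effective but reducible combinations; this is a classical but delicate point requiring a case analysis of potential splittings together with the action of the Weyl group of $(-2)$-classes on the positive cone of $S$.
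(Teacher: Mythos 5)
Your lattice-theoretic existence argument for the first claim is essentially the paper's own (same generators, evenness and signature check, Nikulin's embedding theorem, surjectivity of the period map, an elliptic fibration from the nef class $F$, and Shioda--Tate to recognize the $k$ sections), but there is a genuine gap in the second claim. Your lattice $\Lambda(k,n)$ is generated only by $F$, the sections and the classes $A_1^{(h)},A_2^{(h)}$, and you then invoke Proposition \ref{prop: case (2) theorem- possibilities} as if the congruences \eqref{eq: conditions gen type fibration} alone guaranteed the triple cover. They do not: as the proof of that proposition shows, the Galois triple cover with data $B=F+\sum_h A_1^{(h)}$, $C=\sum_j P_j+\sum_h A_2^{(h)}$ exists only if the class
$$L=\frac{F+2\sum_{j=1}^kP_j+\sum_{h=1}^{n-1}\bigl(A_1^{(h)}+2A_2^{(h)}\bigr)}{3}$$
actually lies in $NS(S)$, i.e.\ the divisor $B+2C$ is $3$-divisible in the Picard group; the congruences are merely the numerical consequences ($LF,LP_j\in\Z$, $L^2\in 2\Z$) of this divisibility, not a substitute for it. On the very general surface you construct, $NS(S)$ \emph{equals} $\Lambda(k,n)$, which does not contain $L$, so no triple cover branched on $D_1\cup\bigcup_h D_h$ exists on that surface and the "in particular" part of the corollary fails for it.

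The fix is exactly what the paper does: take $NS(S)$ to be the index-$3$ overlattice of your $\Lambda(k,n)$ obtained by adjoining the class $L$ (this does not change the rank, and the conditions \eqref{eq: conditions gen type fibration} are precisely what make the overlattice even and integral, which is why only $(k,n)=(3,2),(6,3),(9,1)$ survive the rank bound), then embed this overlattice primitively in $\Lambda_{K3}$ and proceed as you did. Your final numerical substitutions giving $\chi=5,4,5$ and $K_{X^{\circ}}^2=2,4,6$ agree with the paper, and your concern about representing the $(-2)$-classes by irreducible curves is legitimate but is handled, as in the paper, by Kondo's argument for the fibration together with Shioda--Tate for the sections; the real missing ingredient is the $3$-divisible class $L$.
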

	\proof Let $\Lambda$ be a lattice generated by the following classes $$F,\ P_j,\ (j=1,\ldots, k),\ A_1^{(h)},\ A_2^{(h)},\ \frac{F+2\sum_{i=1}^jP_j+\sum_{h=1}^{n-1}\left(A_1^{(h)}+2A_2^{(h)}\right)}{3}=L,$$ where the intersections which are not zero are $$FP_j=A_1^{(h)}A_2^{(h)}=1,\ P_j^2=\left(A_i^{(h)}\right)^2=-2.$$ The lattice $\Lambda$ is even and hyperbolic. If the rank of the lattice $\Lambda$ is less than 12, then it admits a primitive embedding $\Lambda_{K3}$ and there exists a projective K3 surface $S$ such that $NS(S)=\Lambda$ (by the surjectivity of the period map). If $(k,n)$ is such that $\rk(\Lambda)\leq 12$ and satisfies the condition \eqref{eq: conditions gen type fibration}, then $(k,n)=(3,2),(6,3), (9,1)$. The classes $F$ and $F+P_j$ span a copy of $U$ inside $\Lambda$, then there exists a negative definite lattice $K$ such that $U\oplus K\simeq \Lambda$. In \cite[Proof of Lemma 2.1]{Ko}  it is proved that if $NS(S)\simeq\Lambda$ is as described, there exists an elliptic fibration. The $2(n-1)$ $(-2)$-curves $A_i^{(h)}$, which are roots in $K$, are contained in singular fibers as in the statement. By Shioda--Tate formula, the rank of the Mordell--Weil group of the elliptic fibration is the Picard number of the surface minus the rank of the trivial lattice. The latter is spanned by $U$ and the roots contained in $K$. Hence the Mordell--Weil group has rank $k-1$ and hence there are $k$ independent sections, which corresponds to the classes $P_j$ (see \cite[Corollary 6.13]{SchS}).
	\endproof
\begin{say}	We observe that the K3 surface associated to the values $(9,1)$ corresponds to a K3 surface with an elliptic fibration with nine disjoint sections and, generically, without reducible fibers. This K3 surface is obtained as base change of order 2 on a generic rational elliptic surface $R$. One can directly check this fact by considering the N\'eron--Severi group of $S$, which is generated by $L$ and by the classes $P_1,\ldots, P_9$ and it is isometric to $U\oplus E_8(-2)$ which is the N\'eron--Severi group of a K3 surface corresponding to the double cover of a generic rational elliptic surfaces \cite{GSal}. In particular, the rational elliptic surface is a blow up of $\mathbb{P}^2$ in the base locus of a generic pencil of cubics and hence the K3 surface $S$ can be described as double cover of $\mathbb{P}^2$ branched on two specific cubics of the pencil see e.g. \cite{GSal}. The triple cover $X\ra S$ defines a $6:1$ Galois cover $X\ra\mathbb{P}^2$ whose Galois group is $\mathfrak{S}_3$.
	In particular, the rational surface $R$ admits a non Galois triple cover $W$ and by construction $X$ is a double cover of $W$.\\
	\end{say}

	In Lemma \ref{lemma: case (2 theorem)} and hence in Proposition \ref{prop: case (2) theorem- possibilities} we assume that the fibers appearing in the component $D_1$ of the branch locus are smooth. But of course this is not the only possibility. Indeed one can also consider reducible fibers. This gives many ways to construct the data of the triple cover. For example let $F$ be a reducible fiber with two components $G_0$ and $G_1$, then either both $G_0$ and $G_1$ are contained in $B$ or one is contained in $B$ and the other in $C$. These choices produce different covers and the situation can be easily generalized with fibers with many components.
	
	We now describe one case where the fiber is of type $I_2$ (and then it has two components), one component is contained in $B$ and the other in $C$. Moreover in the branch locus there are 4 sections and other $2$ $A_2$-configurations of rational curves. This means that $n=3$ with the notation of Theorem \ref{theorem: three cases giusto}, case $(2)$.
	
	\begin{example} {\rm 
			Le us consider a K3 surface $S$ and the configuration of $(-2)$-curves as in Figure 2. The existence of a K3 surface with the required fibration is guaranteed by the surjectivity of the period map as in Corollary \ref{corollary: case (2) theorem: existence}.

\begin{center}
\definecolor{qqqqff}{rgb}{0,0,1}
\begin{tikzpicture}[line cap=round,line join=round,>=triangle 45,x=0.7cm,y=0.7cm]
\clip(-5.00,-3.5) rectangle (5.00,7);
\draw [line width=2pt] (-1,-3)-- (-3,-1);
\draw [line width=2pt,color=qqqqff] (-3,-2)-- (-1,0);
\draw [line width=2pt,color=qqqqff] (1,-3)-- (3,-1);
\draw [line width=2pt] (3,-2)-- (1,0);
\draw [samples=50,rotate around={-90:(-0.5,3)},xshift=-0.5cm,yshift=3cm,line width=2pt,domain=-8:8)] plot (\x,{(\x)^2-3/2/1});
\draw [samples=50,rotate around={-270:(0.5,3)},xshift=0.5cm,yshift=3cm,line width=2pt,color=qqqqff,domain=-8:8)] plot (\x,{(\x)^2-3/2/1});
\draw [line width=2pt,color=black] (-3,5)-- (-1,6);
\draw [line width=2pt,color=qqqqff] (3,5)-- (1,6);
\draw [line width=2pt,color=qqqqff] (1,1)-- (3,2);
\draw [line width=2pt,color=black] (-1,1)-- (-3,2);
\begin{scriptsize}
\draw[color=black] (-1.64,-1.57) node {$A^{(1)}_2$};
\draw[color=qqqqff] (-1.64,-0.23) node {$A^{(1)}_1$};
\draw[color=qqqqff] (2.3,-2.29) node {$A^{(2)}_1$};
\draw[color=black] (2.3,-0.53) node {$A^{(2)}_2$};
\draw[color=black] (1.62,4.99) node {$G_1$};
\draw[color=qqqqff] (-1.38,4.99) node {$G_0$};
\draw[color=black] (-1.4, 6.12) node {$P_0$};
\draw[color=qqqqff] (2.28,6.09) node {$P_1$};
\draw[color=qqqqff] (2.28,1.10) node {$P_3$};
\draw[color=black] (-1.72,1.1) node {$P_2$};
\end{scriptsize}
\end{tikzpicture}

		\verb|Figure 3|
		\end{center}
			
			We then consider the triple cover such that 
			$$D_1=G_0+G_1+P_0+P_1+P_2+P_3,\ D_2=A_1^{(1)}+A_2^{(1)},\ \ D_3=A_1^{(2)}+A_2^{(2)}.$$ The triple cover data are
			$$B=G_0+P_1+P_3+A_1^{(1)}+A_1^{(2)},\ C=G_1+P_0+P_2+A_2^{(1)}+A_2^{(2)},\ L=\frac{B+2C}{3},\ M=\frac{2B+C}{3}.$$
			We observe that $\Lambda_{D_1}$ is indefinite (for example $2(G_0+G_1)+P_0$ has a positive self intersection) and $X$ is of general type by Theorem \ref{theorem: three cases giusto}.

			A straight forward calculation shows that
			\( B^2=-10, \quad C^2=-10, \mbox{ and } BC=8.
			\)\\
			Which yields
			\( L^2=-2, \quad M^2=-2, \mbox{ and } ML=0.
			\)
			Since all the singularities in the branch locus are of type 2, and in particular negligible, by Proposition \ref{prop: rito numbers} we obtain  
			\[ \chi(X')=4, \quad  K^2_{X'}=-8.
			\]
			Moreover, since $L^2=M^2=-2$, it follows that $h^1(S,L)=h^1(S,M)=0$, and then  $q(X')=0$. Therefore $p_g(X')=3$.
			The surface $X'$ is smooth but not minimal. Indeed each configuration of type $A_2$ in the branch locus corresponds to three $(-1)$-curves on the minimal resolution $X'$ of the cover and each curve $P_j$ corresponds to a $(-1)$-curve on $X'$, see Remark \ref{rem: contraction of curves singularities of type 2}. So we have to contract at least $3\cdot 2+4=10$ curves on $X'$ and we denote by $X_{m}$ the surfaces obtained contracting these 10 curves on $X'$. Then $K_{X_{m}}^2=-8+10=2$. The surface $X_{m}$ is minimal, to prove that directly it is not straightforward. Nevertheless, it is possible to see it using a different construction of $X_{m}$. In \cite{BP17}, the second author with Bini introduce a Calabi-Yau threefold $Y_6$ with Hodge numbers (10, 10). To some extent, this is special. In fact, its group of automorphisms contains a subgroup $G$ isomorphic to $\ZZ/6$. Moreover, this Calabi-Yau threefold can be described as a small resolution of a (3, 3) complete intersection $Y$ in $\PP^5$ with $72$ ordinary double points.  Furthermore, the group $G$ extends to a group of automorphisms of $\PP^5$. Thus, there are six invariant hyperplane sections corresponding to irreducible characters of the abelian group $\ZZ/6$. Since the intersections of $Y$ with these sections are invariant with respect to this group, it was natural to investigate them and their quotients. 
Out of the six invariant sections mentioned before, three of them are irreducible. These are singular surfaces of general type; on the minimal model $\Sigma$ of one of them let act $\ZZ/2 \leq G$, then it is not hard to prove that the minimal resolution of $\Sigma/(\ZZ/2)$ is a minimal surface and is isomorphic to $X_{m}$. Hence $X_{m}$ is the minimal model of $X$, i.e. it is $X^{\circ}$. 	
}\end{example}
	
	\subsection{Examples of case $(3)$ of Theorem \ref{theorem: three cases giusto}}\label{subsec: base change of elliptic fibration}

	The case $(3)$ of Theorem \ref{theorem: three cases giusto} implies that $D_1$ is a fiber of an elliptic fibration. So $D_j$, $j>1$, is necessarily contained in a fiber and this naturally gives two cases: either for all $j=1,\ldots, n$ $D_j$ is a full fiber or at least one of the $D_j$ is supported on rational components of a fiber but it does not coincide with the full fiber. Both cases are possible and we now discuss them. 
	
	\begin{proposition}
		Let $X\ra S$ be a triple cover as in case $(3)$ of Theorem \ref{theorem: three cases giusto}. This implies that  $\varphi_{|D_1|}:S\ra \mathbb{P}^1$ is an elliptic fibration. Suppose that all the $D_j$ are linearly equivalent to $D_1$, then $X$ is obtained by a base change of order 3.
	\end{proposition}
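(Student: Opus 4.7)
Plan. By case $(3)$ of Theorem~\ref{theorem: three cases giusto} we already know $D_1^2=0$, so $\pi:=\varphi_{|D_1|}\colon S\ra\mathbb{P}^1$ is an elliptic fibration with $\dim|D_1|=1$. Since every member of the pencil $|D_1|$ is a fibre of $\pi$, the hypothesis $D_j\sim D_1$ for every $j$ forces each branch component $D_j$ to be the fibre of $\pi$ over some point $p_j\in\mathbb{P}^1$, and the branch divisor of $f$ is a pull-back from the base:
$$B+C \;=\; \sum_{j=1}^n D_j \;=\; \pi^{*}(p_1+\cdots+p_n).$$

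I would then Stein-factorise the composition $\pi\circ f\colon X\ra\mathbb{P}^1$ as
$$X\xrightarrow{\;h\;}\Gamma\xrightarrow{\;g\;}\mathbb{P}^1,$$
with $h$ having connected fibres and $g$ finite. Over a general point $p\in\mathbb{P}^1$ the fibre $F:=\pi^{-1}(p)$ is a smooth elliptic curve disjoint from the branch, so $f^{-1}(F)\ra F$ is an \'etale $\mathbb{Z}/3\mathbb{Z}$-cover, hence either trivial (three disjoint copies of $F$) or a connected $3$-isogeny. The key step is to rule out the second alternative and conclude that $\deg g=3$. Because each $D_j$ is an entire fibre of $\pi$, the divisors $B$ and $C$ themselves pull back from $\mathbb{P}^1$, i.e.\ $B=\pi^{*}B_0$ and $C=\pi^{*}C_0$ for suitable divisors $B_0,C_0$ on $\mathbb{P}^1$; hence $3L\sim\pi^{*}(2B_0+C_0)$, and the Miranda datum $L$ may be chosen in the form $L=\pi^{*}L_0$ with $3L_0\sim 2B_0+C_0$ (and similarly $M=\pi^{*}M_0$). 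The restriction $\mathcal{O}_S(L)|_F$ is then trivial for every smooth fibre $F$, which forces $f^{-1}(F)\ra F$ to be the trivial \'etale cover; so the general fibre of $\pi\circ f$ has three connected components, $\deg g=3$, and the $\mathbb{Z}/3\mathbb{Z}$-action on $X$ descends to a nontrivial action on $\Gamma$, realising $g$ as a cyclic triple cover of $\mathbb{P}^1$ branched over $p_1+\cdots+p_n$.

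With $\deg g=3$ in hand, the morphisms $f\colon X\ra S$ and $h\colon X\ra\Gamma$ satisfy $\pi\circ f=g\circ h$, so the universal property of the fibre product produces a morphism $\Phi\colon X\ra S\times_{\mathbb{P}^1}\Gamma$. Both source and target are finite of degree three over $S$, so $\Phi$ is birational, and the normality of $X$ identifies $X$ with the normalisation of $S\times_{\mathbb{P}^1}\Gamma$. This exhibits $X$ as the base change of $\pi\colon S\ra\mathbb{P}^1$ along the cyclic triple cover $g\colon\Gamma\ra\mathbb{P}^1$, which is what the proposition asserts. The main technical point is the justification that the Miranda datum $L$ can be realised as $\pi^{*}L_0$ (equivalently, that the generic \'etale fibrewise cover is trivial rather than a connected $3$-isogeny); this relies on the fact that the ambiguity in $L$ within its cohomology class is by $3$-torsion compatible with the Galois structure, so that the pulled-back choice yields precisely the cover $f$ via base change.
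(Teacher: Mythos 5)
Your proof is correct and follows essentially the same route as the paper: the paper's (very terse) argument likewise observes that the $D_j$ are fibres of $\varphi_{|D_1|}$, that the triple cover induces a $3:1$ cover $C\ra\mathbb{P}^1$ branched over the images of the $D_j$, and that $X$ is the corresponding base change. Your additional work — the Stein factorisation, ruling out the connected $3$-isogeny alternative by showing $L\sim\pi^*L_0$ (which on a K3 follows from torsion-freeness of $\Pic(S)$), and invoking the universal property of the fibre product — simply makes explicit the step the paper asserts without justification, and is a sound way to do so.
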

	\proof By proof of Corollary \ref{cor: giusto}, $D_1$ is the fiber of an elliptic fibration $\varphi_{|D_1|}:S\ra\mathbb{P}^1$. By hypothesis the $D_j$'s are fibers of the fibration $\varphi_{|D_1|}$ too. The $3:1$ cover $X\ra S$ induces a $3:1$ cover $C\ra\mathbb{P}^1$ where $C$ is a smooth curve. The branch locus of $C	\ra \mathbb{P}^1$ is the image of the branch locus of $X\ra S$ and so it is $\varphi_{|D_1|}(\bigcup_j D_j)$.\endproof

	\begin{proposition}\label{prop: examples case 3 of proposition} Let $S$ be a K3 surface endowed with an elliptic fibration $\varphi_{|F|}:S\ra\mathbb{P}^1$. Let us consider $p_1,\ldots, p_n$ points in $\mathbb{P}^1$ and a triple Galois cover $g:W\ra \mathbb{P}^1$ totally branched on $p_1,\ldots, p_n$. The fiber product $X:=S\times_g\mathbb{P}^1$ is a triple cover of $S$ branched on $n$ fibers. If the fibers over the points $p_i$ are reduced, $X$ is also normal. Denoted by $X^\circ$ the minimal model of $X$, if $X^{\circ}$ is not a product, then  $$h^{1,0}(X^{\circ})=g(W)=n-2\mbox{ and } \ K_{X^{\circ}}^2=0.$$ 
If $X^{\circ}$ is a product, then $h^{1,0}(X^{\circ})=g(W)+1=n-1$.
 
		If all the branch fibers are reduced and not of type $IV$, then $$e(X^\circ)=72,\ \ \chi(X^\circ)=6 \mbox{ and }p_g(X^{\circ})=3+n.$$ 
	\end{proposition}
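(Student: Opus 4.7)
The plan is to exploit the fact that the base change construction produces an elliptic fibration on $X$ and on its minimal model. The projection $X = S \times_{\mathbb{P}^1} W \to W$ has generic fiber an elliptic curve (the generic fiber of $\varphi_{|F|}$), so $X \to W$ is a relatively minimal genus one fibration after blowing down the $(-1)$-curves in fibers that appear on passing to $X^{\circ}$. By case (3) of Theorem \ref{theorem: three cases giusto} we have $\kappa(X)=1$, so $X^\circ$ is a properly elliptic surface $f^\circ\colon X^\circ\to W$; in particular $K_{X^\circ}$ is a $\mathbb{Q}$-multiple of a fiber and $K_{X^\circ}^2=0$. Riemann--Hurwitz applied to the Galois triple cover $g\colon W\to\mathbb{P}^1$ totally branched on $n$ points gives $2g(W)-2 = -6 + 2n$, hence $g(W)=n-2$.

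For $h^{1,0}(X^{\circ})$ I would use the standard fact that for a relatively minimal elliptic fibration $X^\circ\to B$ one has $q(X^\circ)=g(B)$ unless $X^\circ$ is a product $E\times B$, in which case $q(X^\circ)=g(B)+1$. Since our fibration is $f^\circ\colon X^\circ\to W$, this directly yields $h^{1,0}(X^\circ)=n-2$ in the non-product case and $h^{1,0}(X^\circ)=n-1$ in the product case, matching the statement.

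The main obstacle is the Euler-characteristic computation; this is where the reducedness and ``not type $IV$'' hypotheses enter. Since the generic fiber is elliptic, $e(X^\circ)=\sum_{w\in W} e(X^\circ_w)$, and the contributions split into two kinds. Over $q\in\mathbb{P}^1\setminus\{p_i\}$ the cover $g$ is \'etale of degree three, so each singular fiber $S_q$ of $S$ contributes three copies to $X^\circ$, with total contribution $3\sum_{q\neq p_i}e(S_q)=3\bigl(24-\sum_i e(S_{p_i})\bigr)$. Over a branch point $p_i$ I need to analyse the minimal resolution of the base-changed Weierstrass model and invoke Kodaira's classification: the change of type under an order-$3$ base change sends $I_n\mapsto I_{3n}$, $II\mapsto I_0^*$, $III\mapsto III^*$ (so $e$ is multiplied by $3$ in each allowed case), whereas $IV\mapsto I_0$ (a smooth fiber, dropping $e$ from $4$ to $0$), and the non-reduced types $I_n^*,\,II^*,\,III^*,\,IV^*$ violate normality of $X$. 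Thus under the stated hypotheses one has $e(\widetilde{X}_{p_i}^\circ)=3\,e(S_{p_i})$ for every $i$, and the two contributions combine to give $e(X^\circ)=3\cdot 24=72$. Noether's formula then yields $\chi(\mathcal{O}_{X^\circ})=(K_{X^\circ}^2+e(X^\circ))/12=6$, and $p_g(X^\circ)=\chi-1+q=6-1+(n-2)=n+3$, completing the computation.
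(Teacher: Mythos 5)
Your proposal is correct and follows essentially the same route as the paper's proof: the relatively minimal elliptic fibration $X^{\circ}\to W$ gives $K_{X^{\circ}}^2=0$ and $q(X^{\circ})=g(W)$ (or $g(W)+1$ in the product case), the Euler number is computed fiberwise via Miranda's table for a totally ramified base change of order $3$ (reduced non-$IV$ types have their Euler number tripled, $IV\mapsto I_0$, non-reduced types excluded by normality), and Noether's formula then yields $\chi$ and $p_g$. The only cosmetic slip is attributing $\kappa(X)=1$ to case $(3)$ of Theorem \ref{theorem: three cases giusto}, whose implication goes the other way --- the needed direction is Corollary \ref{cor: giusto}(3) --- but this step is not essential, since $K_{X^{\circ}}^2=0$ already follows from the canonical bundle formula for the relatively minimal elliptic fibration, exactly as in the paper.
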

	
	\proof 
	The cover automorphism of $g:W\ra\mathbb{P}^1$ acts as $\zeta_3$ locally near the first $b_1$ ramification points and as $\zeta_3^2$ locally near the other $b_2=n-b_1$ ramification points. Notice that $b_1+2b_2\equiv_3 0$. Let us consider the fiber product $S\times_g\mathbb{P}^1$. It is the triple cover $X$ of $S$ branched over the fibers $F_{p_i}=\varphi_{|F|}^{-1}(p_i)$. So we have a Galois triple cover $X\ra S$. If all the fibers $F_{p_i}$ are reduced, the cover $X$ is normal and we can apply the previous theory. The branching data are $B=\sum_{i=1}^{b_1}F_{p_i}$, $C=\sum_{j=b_1+1}^{n}F_{p_j}$, $L\simeq (b_1+2b_2)F/3$ and $M\simeq (2b_1+b_2)F/3$.
	
	Now let us compute the triple cover invariants. The surface $X$ is endowed with the elliptic fibration. Let us denote by $X^{\circ}$ the minimal model of $X$, which is a smooth surface, admitting a relatively minimal elliptic fibration $\mathcal{E}:X^\circ\ra W$. Assume that it admits at least one singular fiber (which is surely true if there exists a singular fibers of $S$ which is not in the branch locus). Then $X^{\circ}$ is not a product and $h^{1,0}(X^{\circ})=g(W)=n-2$. Moreover, since $X^{\circ}$ admits an elliptic fibration $K_{X^{\circ}}^2=0$. We recall that if a branch fiber are of type $IV$ the corresponding ramification fiber on $X^{\circ}$ is smooth, hence it is possible that after the base change the fibration $X^{\circ}\ra W$ has no singular fibers and it maybe a product.
	
	If the branch fibers are reduced and different from $IV$, the Euler characteristic of their strict transform is 3 times their Euler characteristic (see \cite[VI.4.1]{M85}), then $e(X^{\circ})=3e(S)=72$ so $\chi(X^{\circ})=e(X^{\circ})/12=3\chi(S)=6.$
	
		\endproof
		
We observe that even if there are non reduced fibers $F_{p_i}$ in the branch locus, the invariants of the minimal model of a normalization can be computed by the theory of the base change of elliptic fibrations, see \cite[VI.4.1]{M85}.

	\begin{theorem}\label{theo: list negligible}\label{say: other singularities in the branch locus} The  singularities in the branch locus of a Galois triple cover are negligible (see Definition \ref{def.neg.sing}) if the local equation of the branch locus is in the following list: \begin{itemize}\item $xy$ (type 1)\item $xy^2$ (type 2);
			\item $xy(x+y)$ (simple triple point); \item $x^2-y^3$ (cusp); \item $x(y-x^2)$\end{itemize}\end{theorem}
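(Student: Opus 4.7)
The plan is a case-by-case analysis, following the blow-up strategy of paragraphs \ref{say: singu type 1} and \ref{say: sing type 2}. For each local equation in the list, I would construct the canonical resolution $\sigma\colon \tilde{S}\to S$ as a finite sequence of blow-ups at the singular points of the (total transform of the) branch divisor, continuing until the branch becomes a disjoint union of smooth curves. The key bookkeeping device is that each new exceptional divisor $E$ appears in the total transform of the branch with multiplicity equal to the multiplicity of the current branch at the blown-up point; by the cyclic-cover normalization recalled in \ref{say: Galois enigespance}, $E$ enters the branch of the normalized triple cover of the blown-up surface if and only if this multiplicity is $\not\equiv 0 \pmod{3}$. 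After finitely many blow-ups the branch becomes smooth and the canonical resolution $\tilde{f}\colon \tilde{X}\to \tilde{S}$ is smooth; the singularities of $X$ above the original singular point are then obtained by contracting the appropriate curves on $\tilde{X}$, and once these are checked to be of type $\frac{1}{3}(1,1)$ or $\frac{1}{3}(1,2)$, one concludes by Proposition~\ref{prop: negligible}.

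Concretely, for $x(y-x^{2})$ (a line and a parabola meeting transversally at the origin, multiplicity $2$), a single blow-up produces an exceptional $E_{1}$ of multiplicity $2$ in the branch meeting the strict transforms of the line and of the parabola at two distinct points; at each of these points the configuration is of type $2$, and one further blow-up at each contributes a $\frac{1}{3}(1,1)$-singularity to $X$. For the cusp $x^{2}-y^{3}$ (multiplicity $2$), one blow-up gives $E_{1}$ of multiplicity $2$ tangent to the strict transform of the cusp; a second blow-up at the tangency introduces $E_{2}$ of multiplicity $3$ (not in the branch) and turns the configuration into a transverse meeting of the strict transforms of $E_{1}$ and of the cusp (a type-$2$ configuration), resolved by a third blow-up producing a $\frac{1}{3}(1,1)$-singularity on $X$. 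For the simple triple point $xy(x+y)$ (multiplicity $3$), a single blow-up produces $E_{1}$ of multiplicity $3$ (hence not in the branch) meeting the three smooth strict transforms at three distinct points, so the branch on $\tilde{S}$ is already smooth after one blow-up and the local singularity of $X$ above the triple point is identified via the local Galois cover structure on $\tilde{S}$.

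The main obstacle is the careful coordinate-by-coordinate computation of multiplicities of exceptional divisors under iterated blow-ups, together with the correct identification of the Galois triple cover data on each blown-up surface after normalization (the naive pullback of the cover is typically non-normal whenever an exceptional divisor sits in the branch with a non-reduced multiplicity). The cusp is the most involved case, since it requires three blow-ups and a verification that each intermediate configuration falls into type $1$ or type $2$; for the simple triple point the essential delicate step is the identification of the $\Z/3\Z$-action on the preimage of $E_{1}$ in $\tilde{X}$, which determines the resulting singularity type on the triple cover and hence its negligibility.
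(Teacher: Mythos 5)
Your plan does not follow the paper's argument, and its decisive final step fails. The paper does not resolve these singularities by local blow-ups: it observes that nodes, cusps, tangencies and simple triple points are exactly the singularities occurring in the reduced singular fibers ($I_m$, $II$, $III$, $IV$) of an elliptic fibration on a K3 surface, realizes the Galois triple cover as an order-$3$ base change, computes the invariants of the resulting elliptic surface directly (Proposition \ref{prop: examples case 3 of proposition}), and compares them with the values predicted by Proposition \ref{prop: rito numbers} (here $L^2=M^2=LM=0$, so $\chi=6$, $e=72$, $K^2=0$, $q=n-2$); the agreement is, by Definition \ref{def.neg.sing}, exactly negligibility. Your route instead is to show that over each listed singularity $X$ acquires only points of type $\frac{1}{3}(1,1)$ or $\frac{1}{3}(1,2)$ and then to invoke Proposition \ref{prop: negligible}. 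This cannot work for the three new cases: a Galois triple cover is totally ramified over its branch, so locally $X=\{z^3=h(x,y)\}$, and over a cusp one gets $z^3=x^2-y^3$, a $D_4$ point; over a tangency of two smooth branches an $E_6$ point; and over the simple triple point $z^3=xy(x+y)$, the cone over a smooth plane cubic, i.e.\ a simple elliptic singularity — precisely as the paper's own proof states. None of these is a $\frac{1}{3}(1,1)$ or $\frac{1}{3}(1,2)$ point, so Proposition \ref{prop: negligible} gives you nothing, and in particular your claim that the cusp contributes a $\frac{1}{3}(1,1)$ singularity is false.

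There are also concrete errors in the local analyses. For the cusp, after the second blow-up the new exceptional curve passes through the intersection point of the strict transforms of $E_1$ and of the branch (the three curves are concurrent), so the configuration is not a plain type-$2$ point away from the exceptional locus, and in any case the singularity of $X$ is the $D_4$ point above, not a cyclic quotient. For $x(y-x^2)$ your reading (a transverse intersection) makes the case analytically identical to $xy$ and adds nothing, while the case the paper actually treats (via fibers of type $III$) is the tangency of two smooth branches, for which your picture — the exceptional curve meeting the two strict transforms in two distinct points — is wrong: they meet in a single point, and the cover acquires an $E_6$ point. For the triple point you never identify the singularity of $X$; it is simple elliptic, hence non-rational with $p_g=1$, so its resolution changes $\chi(\mathcal{O})$ and no quotient-singularity statement can settle negligibility — note that the paper itself excludes type $IV$ branch fibers from part of Proposition \ref{prop: examples case 3 of proposition}, a sign that this is the truly delicate case. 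A corrected local version of your approach would have to compute, for each local type, the correction terms of the canonical resolution and compare them with what the formulae of Proposition \ref{prop.invariants} actually compute; the paper sidesteps all of this by the global fibration comparison.
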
	
	\proof The singularities of type 1 and 2 were considered in \cite[Example 1.6 and 1.8]{PP13} (see also Proposition \ref{prop: negligible}). For the other cases we use the results of Proposition \label{prop: examples case 3 of proposition}, where we computed the invariants of $X^{\circ}$ directly by considering the geometry of the elliptic fibration.  We compare them with the ones obtained by applying the point $(3)$ of Proposition \ref{prop: rito numbers}. If they coincide, this means that all the singularities which can appear in the branch locus are negligible.
	Since $L^2=M^2=LM=0$, one obtains $\chi(X')=6$, $e(X')=72$ and $K_{X'}^2=0$. Moreover, $h^1(L)=h^1(\frac{b_1+2b_2}{3}F)=\frac{b_1+2b_2}{3}-1=\frac{b_1+2b_2-3}{3}$ and $h^1(M)=h^1(\frac{2b_1+b_2}{3}F)=\frac{2b_1+b_2}{3}-1=\frac{2b_1+b_2-3}{3}$ so that $h^1(L)+h^1(M)=b_1+b_2-2=n-2$. So all the singularities appearing in the singular reduced fibers are negligible.
	
	For example, a branch fiber of type $III$ in $S$ consists of two tangent rational curves. We deduce that the singularities in the branch locus obtained by tangency of two components are negligible singularities. More precisely if there is a branch fiber of type  $III$ on $S$, it induces a fiber of type $IV^*$ (whose dual graph of curves is $\widetilde{E}_6$) of $\mathcal{E}:X'\ra W$. We deduce that if the branch locus of a totally ramified triple cover contains two tangent curves, the triple cover has a singularities of type $E_6$ (see \verb|Figure 4|).

	\begin{center}
		\includegraphics[width=8cm]{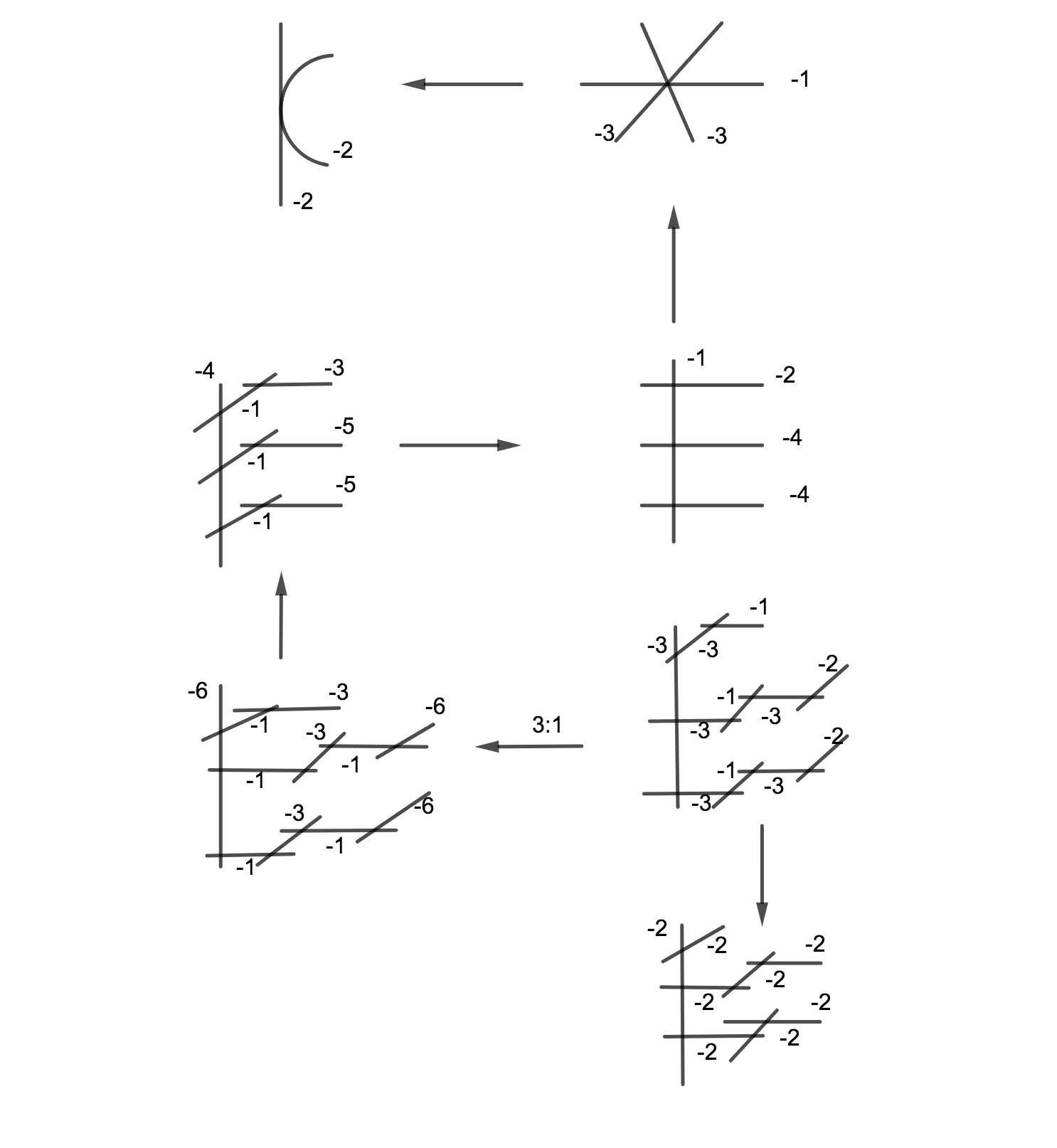}
		
		\verb|Figure 4|
	\end{center}
	
	If a branch fiber on $S$ is singular, then the corresponding fiber on $X^{\circ}$ is obtained by a base change of order 3;  \cite[Table VI.4.1]{M89} shows that effect of a base change on singular fibers of an elliptic fibration.

	By \cite[Table VI.4.1]{M89} one immediately obtains that, if the branch contains a cusp, the cover has a singularity of type $D_4$, if the branch has a simple triple point, the cover has an elliptic singularity.
	Notice that one recovers the singularities of type 1 by considering fibers of type $I_m$ in the branch locus. \endproof
	
	Note that the list of Theorem \ref{theo: list negligible} is not necessarily complete.\\
	
\medskip

	We now consider the other possibility of $(3)$ of the Theorem \ref{theorem: three cases giusto}, hence we assume that $D_1$ is a fiber but at least one of the other components of the branch locus is strictly contained in a fiber.
	
	In this case, the effect of the triple cover on the fibers strictly containing $D_j$ is not the one of a base change of order 3. In  the following proposition we describe how the fibers changes under a triple cover of this type.
	\begin{corollary}
		Let $D_1$ be a connected reducible (possibly non reduced) component of the branch locus of a Galois triple cover $f:X\ra S$. Let  $\varphi_{|D_1|}:S\ra \mathbb{P}^1$ be the induced elliptic fibration (see proof of Theorem \ref{theorem: three cases giusto}). Let $D_j\subset (F_S)_j$ and we assume that $D_j=(F_S)_j$ iff $j\leq k$. Given a fiber $F_S$, we denote by $F_{X'}$ the correspondent fiber in the minimal model of the normalization of $X$.
		Then we have the following tables of types of singular fibers:
		$$\begin{array}{cc}
			\begin{array}{|c|c|}
				j \leq k&\\
				\hline
				\mbox{type }F_S&\mbox{type }F_{X'}\\
				\hline 
				I_m&I_{3m}\\ 
				\hline 
				I_m^*&I_{3m}^*\\ 
				\hline 
				II^*&I_0^*\\ 
				\hline 
				III^*&III\\ 
				\hline 
			
				II&I_0^*\\ 
				\hline 
				III&III^*\\ 
				\hline 
				IV&I_0\\ 
				
			\end{array}&\begin{array}{|c|c|}j>k&\\
				\hline
				\mbox{type }F_S&\mbox{type }F_{X'}\\
				\hline 
				I_{3m}&I_{m}\\
				\hline
				IV&IV\\
				\hline
				IV^*&I_0\end{array}\end{array}$$
		
	\end{corollary}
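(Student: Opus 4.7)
The statement splits into two cases depending on whether the fiber $(F_S)_j$ is entirely contained in the branch locus ($j\le k$, so $D_j=(F_S)_j$) or only partially ($j>k$, so $D_j\subsetneq (F_S)_j$); the two cases call for different arguments.

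For $j\le k$, I would argue that the Galois triple cover $f$ restricted to an analytic neighbourhood of $(F_S)_j$ coincides with the base change of the elliptic fibration $\varphi_{|D_1|}\colon S\ra\PP^1$ along a Galois triple cover of a small disc in $\PP^1$ totally ramified at $p_j=\varphi_{|D_1|}(D_j)$, because both maps are $\Z/3\Z$-covers branched exactly on the fiber $(F_S)_j$ with the same local monodromy (this is the local version of the construction in Proposition \ref{prop: examples case 3 of proposition}). Thus the transformation of the Kodaira type of $(F_S)_j$ under $f$ is exactly the transformation under a totally ramified degree-$3$ base change, and reading off the corresponding rows of Miranda's Table VI.4.1 in \cite{M89} produces the first table of the statement.

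For $j>k$, the proper sub-curve $D_j\subsetneq(F_S)_j$ must satisfy the $3$-divisibility condition of Paragraph \ref{say: Galois enigespance}: together with the other branch data, a suitable $\frac{1}{3}$-integral combination of its irreducible components must lie in $NS(S)$. This, together with Lemma \ref{lemma: type of sing}, sharply restricts the admissible configurations. I would first enumerate which reducible Kodaira fibers can accommodate such a $D_j$ and with which shape; the outcome should be exactly an $A_2$-chain inside an $I_{3m}$, a pair of simple components inside a $IV$, and an $A_2$-chain inside a $IV^*$. For each case I would then build the canonical resolution along the fiber using Paragraphs \ref{say: singu type 1}--\ref{say: sing type 2} and Theorem \ref{say: other singularities in the branch locus} to control the (negligible) singularities, identify and contract the resulting $(-1)$-curves to pass to the minimal model, and finally read off the dual graph and multiplicities of the image fiber to identify its Kodaira type.

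The main obstacle will be the fiber-by-fiber combinatorics in the case $j>k$: for each of $I_{3m}$, $IV$, $IV^*$ one must pin down the precise branch sub-curve, follow its inverse image and the exceptional components through the blow-ups and the cover, and then match the resulting configuration against the Kodaira list. The $IV^*\to I_0$ row is the subtlest --- one must verify that the $\widetilde{E}_6$ configuration collapses to a smooth elliptic curve after all the blow-ups, covers, and $(-1)$-contractions --- and this is where I would expect most of the bookkeeping work to go.
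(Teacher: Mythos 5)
Your plan is correct and follows the paper's own proof essentially step for step: for $j\le k$ the paper likewise treats the branch fibers as order-3 base changes and reads the resulting Kodaira types off Miranda's table, and for $j>k$ it likewise uses the integrality of $\tfrac{1}{3}\sum_h\bigl(A_1^{(h)}+2A_2^{(h)}\bigr)$ against the components of $F_S$ (i.e.\ the discriminant-group condition) to conclude that only $A_2$-configurations inside fibers of type $I_{3m}$ (with $r=m$), $IV$ (with $r=1$) and $IV^*$ can occur, and then contracts the resulting $(-1)$-curves case by case to identify the new fiber. The only minor differences are bookkeeping: the paper takes the $A_2$-shape of the $D_j$'s with $j>k$ as already established in the proof of Theorem \ref{theorem: three cases giusto} rather than rederiving it, and in the $IV^*$ case the branch part consists of more than one $A_2$-configuration, a detail your planned enumeration would produce anyway.
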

	\proof
	If $j\leq k$, then $D_j=(F_S)_j$, the fibers $(F_S)_j$ are branch fibers and the type of $(F_X)_j$ is given in \cite{M85}. We already observed that if $D_i$ is properly contained in a fiber $F_S$, $i\geq k$ and $D_i$ is supported on an $A_2$-configuration.
	
	Let us suppose that $F_S$ properly contains $r$ connected components $D_i$, $i\geq k$. These are $A_2$-configuration, whose components are denoted by $A_1^{(1)}$, $A_2^{(1)}$, $A_1^{(2)}$, $A_2^{(2)}$, $\ldots$, $A_1^{(r)}$, $A_2^{(r)}$.

	Moreover, $\sum_{h=1}^r(A_1^{(h)}+2A_2^{(h)})/3$ necessarily has an integer intersection with all the components of $F_S$, hence it is contained in the discriminant group of lattice associated to $F_S$. We conclude that $F_S$ is necessarily one of the following: $I_{3m}$, with $r=m$, $IV$ with $r=1$ and $IV^*$ with $r=2$. 
	
	There are $m$ $A_2$-configurations contained in $I_{3m}$ and the birational inverse image of each of them in the minimal model $X^{\circ}$ is a point. The remaining curves form an $I_m$ fiber.
	
	To obtain the minimal model in case $IV^*$, one first contracts the inverse image of the curves in the $A_2$-configurations each to a point. These three points lie on a $(-1)$-curve (which is the inverse image of the unique remaining curve in $IV^*$). Contracting this curve we obtain an $I_0$ fiber.
	
	The case $IV$ is as the {\bf{\verb|Figure 5|}}.
	
	\begin{center}
		\includegraphics[width=8cm]{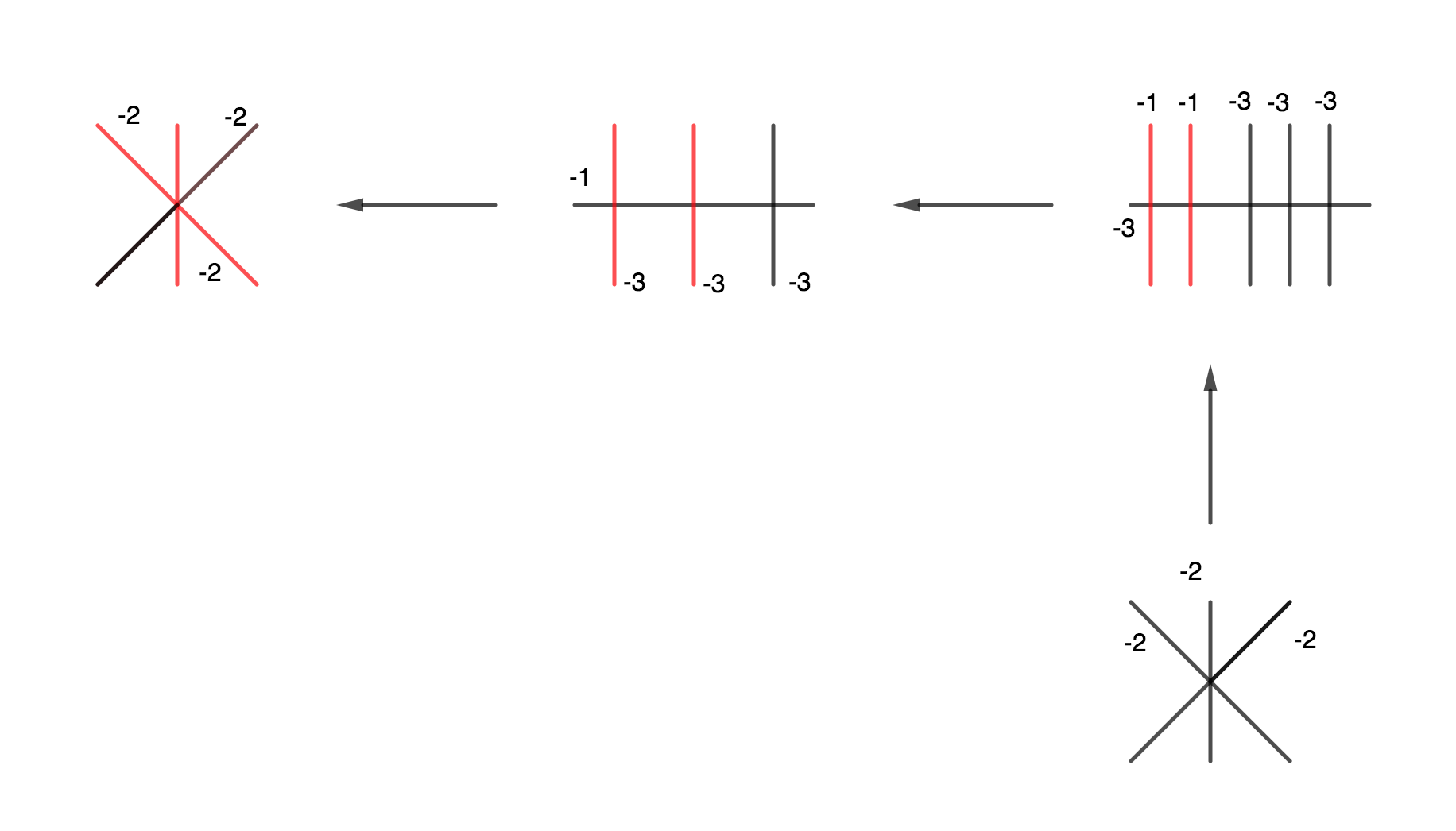}
		
		\verb|Figure 5|
	\end{center}
	\endproof

	\subsection{Irregular covers of K3 surfaces}\label{subsec: irregular}

	Even if a K3 surface is a regular surface, it is of course possible to construct triple covers of K3 surfaces which are irregular surfaces. The easiest examples are provided by cases $(1)$ and $(3)$ of Theorem \ref{theorem: three cases giusto}. Indeed, by case (1) of Theorem \ref{theorem: three cases giusto} the Galois triple cover of a K3 surface branched on 9 $A_2$-configuration is an Abelian surface. This case is effective, since it is known that there exist Abelian surfaces admitting a symplectic automorphism of order 3 such that their quotient by this automorphism is birational to a K3 surface, \cite{F}.
	In Proposition \ref{prop: examples case 3 of proposition} the irregularity of a surface $X$ obtained as base change of order 3 on an elliptic fibration on a K3 surface $S$ is computed. One check that if $n>2$, then the surface $X$ is irregular. Once again it is clear that there exists explicit examples of this situation, indeed it suffices to construct a curve $W$ (with the notation of Proposition \ref{prop: examples case 3 of proposition}) which is a Galois triple cover of $\mathbb{P}^1$ branched on more than 2 points.
	
	We observe that the previous constructions produce surfaces $X$ whose Kodaira dimension is either 0 or 1.
	It is more complicated to construct examples of irregular covers of K3 surfaces which are surfaces of general type. This is essentially due to the difficulties in finding branch divisors on a K3 surfaces which are not supported on rational or elliptic curves, but such that the associated triple cover is irregular. Indeed if $X \ra S$ is a triple cover of a K3 surface, than $X$ is irregular if and only if at least one between $L$ and $M$ satisfies $h^1(L)>0$ or $h^1(M)>0$, and there are not so many divisors with this property.

	\begin{lemma}\label{lemma: h1D neq 0}
		Let $D$ be a divisor on a K3 surface $S$ such that $-D$ is not effective. If $h^1(D)\neq 0$ then one of the following holds
		\begin{itemize}
			\item if $D^2<0$ then $D^2\leq -4$, and if $D^2=-4$, then $D$ is effective;
			\item if $D^2= 0$ and $D$ is nef, then $D=nE$ where $E$ is a genus 1 curve, $n>1$ and $h^1(S,D)=n-1$; if $D^2=0$ and $D$ is not nef $D=M+F$ where $M$ is its moving part and $F$ is its fix part, and $$F(F-2D)>0.$$ 
			\item if $D^2>0$, then $D$ is not nef. In this case $D=M+F$ where $M$ is its moving part and $F$ is its fix part, and $$F(F-2D)>0.$$ 
		\end{itemize}
	\end{lemma}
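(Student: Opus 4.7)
The first reduction is to combine Serre duality with Riemann--Roch on the K3 surface $S$: since $-D$ is not effective, $h^2(D)=h^0(-D)=0$, hence $h^0(D)-h^1(D)=\chi(\mathcal{O}_S(D))=2+\tfrac{1}{2}D^2$. In particular $h^1(D)\ge 1$ forces $h^0(D)\ge 3+D^2/2$, so $D$ is automatically effective whenever $D^2\ge -4$, which already establishes the effectivity claim in the boundary case $D^2=-4$ of the first bullet. To rule out $D^2=-2$, I would assume $D^2=-2$ with $h^1(D)\ge 1$; then $h^0(D)\ge 2$, so the moving/fixed decomposition $D=M+F$ is non-trivial, and the classical Saint-Donat/Mayer analysis of linear systems on K3 surfaces gives $M$ nef with $M^2\ge 0$, while $MF\ge 0$ since $M$ and $F$ share no components. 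The restriction sequence $0\to\mathcal{O}(M)\to\mathcal{O}(D)\to\mathcal{O}_F(D)\to 0$, combined with Kawamata--Viehweg vanishing for $M$ when $M^2>0$ and with Serre duality on $F$ (using the adjunction identity $\omega_F=\mathcal{O}_F(F)$ that follows from $\omega_S=\mathcal{O}_S$), would translate $h^1(D)\ge 1$ into $h^0(\mathcal{O}_F(-M))\ge 1$, forcing $MF=0$, then $M^2=0$ and $F^2=-2$, and finally a contradiction via the explicit structure $M\sim nE$ of nef classes of square zero.

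For the $D^2=0$ bullet, if $D$ is nef and non-zero, the classical fact already invoked in the proof of Theorem \ref{theorem: three cases giusto} -- that a nef class of square zero on a K3 surface is a non-negative multiple of an elliptic class -- gives $D=nE$ with $E$ smooth of genus one. The elliptic fibration $\varphi_{|E|}\colon S\to\mathbb{P}^1$ realises $|nE|=\varphi_{|E|}^{*}|\mathcal{O}_{\mathbb{P}^1}(n)|$, whence $h^0(nE)=n+1$ and therefore $h^1(nE)=n-1$, which gives the stated formula and in particular requires $n\ge 2$ for $h^1>0$. If $D$ is not nef I would write $D=M+F$ and use the direct numerical identity
$$F(F-2D)=-F(F+2M)=M^2-D^2,$$
so the required $F(F-2D)>0$ reduces to $M^2>D^2=0$; this strict positivity would be extracted from $h^1(D)>0$ via the same cohomological chase on the restriction sequence above.

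For the $D^2>0$ bullet, nefness of $D$ would make $D$ big and nef, and Kawamata--Viehweg vanishing would force $h^1(D)=0$, contradicting the hypothesis; hence $D$ is not nef and $F\ne 0$ in the decomposition $D=M+F$. The identity $F(F-2D)=M^2-D^2$ then reduces the required $F(F-2D)>0$ to the inequality $M^2>D^2$, once again obtained from $h^1(D)>0$ via the cohomology of the restriction sequence $0\to\mathcal{O}(M)\to\mathcal{O}(D)\to\mathcal{O}_F(D)\to 0$ together with Serre duality on $F$.

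The main obstacle is the passage from the cohomological hypothesis $h^1(D)>0$ to the strict numerical inequalities in the three bullets (contradiction when $D^2=-2$, $M^2>0$ in the non-nef sub-case of $D^2=0$, and $M^2>D^2$ when $D^2>0$). Each of these requires the same careful cohomology chase, in which Kawamata--Viehweg vanishing eliminates $h^1(M)$ and adjunction on $S$ controls the dualizing sheaf $\omega_F=\mathcal{O}_F(F)$ of the fixed part. The delicate point is that, since $F$ need not be reduced or irreducible, one has to argue on each connected component separately and keep track of the degrees of $-M$ on them.
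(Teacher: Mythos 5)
The parts of your proposal that are carried out explicitly are fine and coincide with the paper's own route: Riemann--Roch plus $h^2(D)=h^0(-D)=0$ gives effectivity when $D^2=-4$, Kawamata--Viehweg rules out $D$ nef with $D^2>0$, and the computation $h^1(nE)=n-1$ for a nef class of square zero is the paper's. The genuine gaps are exactly at the two places where you defer to a ``cohomological chase''. First, the exclusion of $D^2=-2$: your chase terminates in the configuration $M^2=0$, $MF=0$, $F^2=-2$ and you then assert ``finally a contradiction via the explicit structure $M\sim nE$'' --- but no contradiction is available, because that configuration is realized. On an elliptic K3 surface with a reducible fiber take $D=E+\Gamma$, with $E$ the fiber class and $\Gamma$ an irreducible component of a reducible fiber, so $E\Gamma=0$, $\Gamma^2=-2$: then $-D$ is not effective, $D^2=-2$, $|D|=\Gamma+|E|$ gives $h^0(D)=2$, while $\chi(D)=1$ and $h^2(D)=0$, hence $h^1(D)=1\neq 0$. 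So this step of your argument cannot be completed. (For comparison, the paper disposes of this case by asserting that $h^0(D)\le 1$ whenever $D^2<0$, which fails for the same divisor; so the trouble here lies with the first bullet of the statement itself rather than with an idea you failed to find, but as a proof of the stated lemma your argument has a hole exactly where the paper's does.)

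Second, in the two non-nef bullets your identity $F(F-2D)=M^2-D^2$ is correct, but the key inequality $M^2>D^2$ is not something the restriction sequence plus Serre duality on $F$ hands you. The direct route (the paper's) is $h^0(D)=h^0(M)$ together with Riemann--Roch on $M$ and $D$, which gives $h^1(D)=\tfrac12\left(M^2-D^2\right)+h^1(M)$; to conclude one needs $h^1(M)=0$, which holds when $M^2>0$ (the moving part is base-point free on a K3, hence big and nef, and Kawamata--Viehweg applies) but fails precisely when $M=nE$ with $n\ge 2$, where $h^1(M)=n-1$. In that regime the asserted inequality can actually be false: for $D=nE+\Gamma_0+\Gamma_1$ with $\Gamma_0$ a section, $\Gamma_1$ a fiber component disjoint from $\Gamma_0$, and $n\ge 3$, one has $D^2=2n-4>0$, $D$ not nef, $h^0(D)=h^0(nE)=n+1$, hence $h^1(D)=1$, yet $F(F-2D)=4-2n<0$. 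So no cohomological chase can close this step in the stated generality; a correct argument must either assume or prove $h^1(M)=0$ (equivalently, exclude the case where the moving part is a multiple of an elliptic class), and your proposal, like the paper's two-line computation which tacitly assumes $h^0(M)=\chi(M)$, leaves precisely this case untreated.
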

	\proof If $D^2<0$, then $h^0(S,D)\leq 1$. If $D^2=-2$, then $\chi(D)=1$ which would imply $h^1(S,D)=0$.
	
	\medskip
	
	If $D^2>0$ and $D$ is nef, then $D$ is big and nef, and by Kawamata--Viehweg vanishing theorem $h^1(S,D)= 0$. Therefore, if $D^2>0$ and $h^1(D)\neq 0$ then $D$ is not nef. In particular, there is a fixed part $F$ in $|D|$ such that $D=M+F$ with $F^2<0$ and $DF<0$. Moreover $h^0(S,D)=h^0(S,D-F)$. By Riemann--Roch theorem  
	$$\frac{1}{2}(D-F)^2+2=h^0(S,D-F)=h^0(S,D)=\frac{1}{2}D^2+2+h^1(S,D).$$

	\medskip
	
	Finally, if  $D^2=0$ and $D$ is not nef the proof is the same as the case above. If $D$ is nef, then $D=nE$ where $E$ is a genus 1 curve. We recall the well known fact $h^1(S,nE)=n-1$. To recover this, note that $L^2 = 0$ implies $h^0(S, L) \geq 2$, as by Serre duality $h^2(S, L) = h^0(S, L^{-1}) = 0$ for
	the non-trivial nef line bundle $L$ (intersect with an ample curve). It is enough to use Riemann--Roch theorem for a K3 surface to compute $\chi(\oo_S(nE))=2$ and then use inductively the exact sequence in cohomology associated to the fundamental sequence of $E$ tensorized with $\oo_S(nE)$:
	\[
	0 \ra \oo_S\big((n-1)E\big) \ra \oo_S(nE) \ra \oo_C(nE|_E) \ra 0.
	\]
	\endproof

\begin{say}	In view of Lemma \ref{lemma: h1D neq 0}, we consider divisors on a K3 surface with very low self intersection,  In Theorems \ref{theo: cover S16} and \ref{theo: cover S15}  we construct specific irregular surfaces of general type which are covers of K3 surfaces and some notation are needed.
	
	We denote by $M_{(\Z/2\Z)^4}$ a specific overlattice of $\oplus_{1}^{15}A_1$, constructed as follows:
	denoted by $N_i$ the 15 orthogonal classes with self intersection $-2$ which generate $\oplus_{1}^{15}A_1$, we add to the lattice $\langle N_i\rangle$ the vectors
	\begin{eqnarray*}\begin{array}{c}v_1:=\left(\sum_{i=1}^{8}N_i\right)/2, v_2:=\left(\sum_{i=1}^{4}N_i+\sum_{j=9}^{12}N_j\right)/2\\
			v_3:=\left(N_1+N_2+N_5+N_6+N_9+N_{10}+N_{13}+N_{14}\right)/2,\ v_4:=\left(\sum_{i=0}^{7}N_{2i+1}\right)/2.\end{array}\end{eqnarray*}
	The lattice $M_{(\Z/2\Z)^4}=\langle N_1,\ldots ,N_{15},\ v_1,\ldots, v_4\rangle$ is an even negative definite lattice of discriminant $(\Z/2\Z)^7$.

	Similarly, we define $M_{(\Z/2\Z)^3}$ the overlattice of $\oplus A_1^{14}$  obtained as above starting with 14 divisors $N_i$ and adding the classes $v_1$, $v_2$, $v_3$. The lattice $M_{(\Z/2\Z)^3}$ is an even negative definite lattice of discriminant $(\Z/2\Z)^8$.

	Let us consider the rank 16 lattice $R_{16}:=\langle 6\rangle\oplus M_{(\Z/2\Z)^4}$ and let us denote by $H$ the generator of $\langle 6\rangle$. By \cite[Theorem 8.3]{GS}, there exists an even overlattice $R'_{16}$ of index 2 of $R_{16}$ obtained by adding to the lattice $R_{16}$ the class $\left(H+\sum_{i=1}^{15}N_i\right)/2$. The discriminant group of $R'_{16}$ is $\Z/6\Z\times (\Z/2\Z)^5$. By \cite[Theorem 1.14.4 and Remark 1.14.5]{Nik Int} $R'_{16}$ admits a primitive embedding in $\Lambda_{K3}$. So there is a K3 surface (indeed a 4 dimensional family of K3 surfaces) whose N\'eron--Severi group is isometric to $R'_{16}$ lattice. Observe that this K3 surface has a model (given by the map $\varphi_{|H|}$) as the complete intersection of a quadric and a cubic in $\mathbb{P}^4$ with 15 nodes. \end{say}
	\begin{theorem}\label{theo: cover S16}
		Let $S_{16}$ be a K3 surface such that $NS(S_{16})\simeq R'_{16}$. On the surface $S_{16}$ there are a smooth curve $G$ of genus 4 and fifteen disjoint rational curves $N_i$, $i=1,\ldots, 15$  such that $GN_i=1$, $i=1,\ldots, 15$. There exists a Galois triple cover $\pi:X\ra S_{16}$ branched on $G\bigcup\cup_i N_i$. The invariants of minimal model $X^{\circ}$ of $X$ are:
		$$p_g(X^{\circ})=6,\ q(X^{\circ})=1,\ c_1(X^{\circ})^2=18$$\end{theorem}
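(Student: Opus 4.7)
The plan is to construct an explicit Galois triple cover datum on $S_{16}$ with $B=G$ and $C=\sum_{i=1}^{15}N_i$, identify the singularities of the branch locus as negligible, read off the invariants of the minimal resolution $X'$ via Proposition~\ref{prop: rito numbers}, and finally contract the $(-1)$-curves created by the type~$2$ singularities by means of Remark~\ref{rem: contraction of curves singularities of type 2}. For the geometric input, the lattice $R'_{16}$ is even, hyperbolic of rank $16$, and admits a primitive embedding into $\Lambda_{K3}$, so surjectivity of the period map produces a $4$-dimensional family of K3 surfaces $S_{16}$ with $\NS(S_{16})\simeq R'_{16}$. For a generic such surface, the Weyl chamber containing the ample class is bounded by the classes $N_i$, so each $N_i$ is represented by a smooth rational curve, and they are disjoint since pairwise orthogonal. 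I would then introduce $G:=(3H-\sum_{i}N_i)/2\in R'_{16}$, compute $G^2=6$, $G\cdot N_i=1$, $G\cdot H=9$, and conclude via Saint-Donat's results for big and nef line bundles on K3 surfaces that a generic member of $|G|$ is a smooth irreducible curve of genus $4$.

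Setting $L:=H$ and $M:=(H+\sum_i N_i)/2$ one checks $2L-M=G$ and $2M-L=\sum_i N_i$, so this is a valid Galois triple cover datum in the sense of Paragraph~\ref{say_GTD}. The branch locus $G+\sum_i N_i$ has exactly $15$ singular points, the transverse intersections $G\cap N_i$; at each of them $G\subset B$ and $N_i\subset C$, so by Paragraph~\ref{say: sing type 2} these are type~$2$ singularities, producing $\frac{1}{3}(1,1)$-singularities on $X$ that are negligible by Proposition~\ref{prop: negligible}. With $L^2=6$, $M^2=-6$, and $LM=3$, Proposition~\ref{prop: rito numbers} gives $\chi(\oo_{X'})=6$, $K^2_{X'}=3$, and $q(X')=h^1(S,L)+h^1(S,M)$, while $p_g(X')=\chi(X')+q(X')-1$.

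I expect the main technical point to be the vanishing $h^0(S,M)=0$. Indeed $h^1(S,H)=0$ by Kawamata--Viehweg vanishing, and Riemann--Roch gives $\chi(M)=-1$, while $h^2(M)=h^0(-M)=0$ since $-M\cdot H<0$, so $h^1(M)=h^0(M)+1$; one needs $h^0(M)=0$ to obtain $q(X')=1$ and then $p_g(X')=6$. Via the short exact sequence $0\to\oo_S(M-\sum_i N_i)\to\oo_S(M)\to\bigoplus_i\oo_{N_i}(-1)\to 0$ the vanishing reduces to showing that $(H-\sum_i N_i)/2$, of $H$-degree $3$, is not effective. The key observation is that every class $\Gamma\in R'_{16}$ satisfies $\Gamma\cdot H\in 3\Z$, since $H\cdot H=6$, $H$ is orthogonal to $M_{(\Z/2\Z)^4}$, and $H\cdot(H+\sum_i N_i)/2=3$. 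An elementary case analysis (writing $\Gamma=aH+\sum b_i N_i+\varepsilon(H+\sum_i N_i)/2$) shows that no irreducible curve on $S_{16}$ has $\Gamma\cdot H=3$: a $(-2)$-class with $\Gamma\cdot H=3$ would force $\sum_i(2b_i+1)^2=7$ with $15$ summands each at least $1$, a class with $\Gamma^2=0$ and $\Gamma\cdot H=3$ would force $\sum_i(2b_i+1)^2=3$, both impossible, while for $\Gamma^2\geq 2$ Hodge index gives $\Gamma\cdot H\geq\sqrt{12}>3$, so the divisibility constraint forces $\Gamma\cdot H\geq 6$. Any effective class of $H$-degree $3$ would then decompose into irreducible summands of $H$-degrees in $\{0\}\cup\{6,9,\ldots\}$, which cannot sum to $3$.

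Finally, since $G$ has genus $4$ and the $N_i$ are disjoint, each of the $15$ type~$2$ singularities of the branch locus lies on exactly one rational branch component, namely $N_i$. By Remark~\ref{rem: contraction of curves singularities of type 2} each such singularity contributes a single $(-1)$-curve on $X'$, and contracting these $15$ pairwise disjoint $(-1)$-curves produces $X^\circ$ with $K^2_{X^\circ}=K^2_{X'}+15=18$, while $p_g$ and $q$ are unchanged. The absence of further $(-1)$-curves is handled as in the proof of Corollary~\ref{cor: giusto irreducible}: a $(-1)$-curve disjoint from the ramification would descend to a $(-1)$-curve on $S_{16}$, impossible, while one meeting the ramification would force either an identification of singular points on $S_{16}$ or an extra singularity on the image of the smooth curve $G$, again impossible.
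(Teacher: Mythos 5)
Your construction is exactly the paper's (same cover data $B=G$, $C=\sum_iN_i$, $L=H$, $M=(H+\sum_iN_i)/2$, negligible type-$2$ singularities, invariants of $X'$ via Proposition \ref{prop: rito numbers}, and contraction of the $15$ exceptional curves over the $N_i$); the only point where you go beyond the paper --- which merely asserts that $M$ is not effective --- is your lattice-theoretic verification of $h^0(M)=0$, and that verification contains a genuine error. Writing $\Gamma=aH+\sum_i b_iN_i+\varepsilon(H+\sum_iN_i)/2$ you implicitly take $b_i\in\Z$, forgetting the glue vectors $v_1,\dots,v_4$ of $M_{(\Z/2\Z)^4}$, which allow half-integral coefficients on the $N_i$. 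Consequently your claim that no irreducible curve on $S_{16}$ has $H$-degree $3$ is false: for instance
$\Gamma_1=\big(H-N_9-\cdots-N_{15}\big)/2=(H+\sum_{i=1}^{15}N_i)/2-v_1-\sum_{i=9}^{15}N_i$
lies in $R'_{16}$, has $\Gamma_1^2=-2$ and $\Gamma_1\cdot H=3$, hence is effective by Riemann--Roch (it cannot be anti-effective because $H$ is big and nef), and any effective class of $H$-degree $3$ must contain an irreducible component of $H$-degree exactly $3$. Your count ``$\sum_i(2b_i+1)^2=7$ with $15$ summands each at least $1$'' breaks down precisely because on the support of a weight-$8$ codeword the numbers $2b_i+1$ are even and the corresponding summands may vanish.

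Only your argument, not the statement, fails: what is actually needed is that the single class $(H-\sum_{i=1}^{15}N_i)/2$ is not effective. The degree count (all degrees are multiples of $3$, and the only irreducible curves of degree $0$ are the $N_i$) still shows that an effective representative would have to be $\Gamma_0+\sum_ia_iN_i$ with $a_i\geq 0$ and $\Gamma_0$ an irreducible curve of $H$-degree $3$; the corrected classification of degree-$3$ classes gives $\Gamma_0^2=-2$ (the case $\Gamma_0^2=0$ still needs $\sum_i(2b_i+1)^2=3<7$, impossible) and $\Gamma_0=\tfrac12\big(H+\sum_{i\notin S}\epsilon_iN_i\big)$ with $S$ a weight-$8$ codeword and $\epsilon_i=\pm1$. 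Then $(H-\sum_iN_i)/2-\Gamma_0$ has coefficient $-\tfrac12$ on each $N_i$ with $i\in S$, so it is never a nonnegative integral combination of the $N_i$. Hence $h^0(M)=0$, $h^1(M)=1$, and your values $q(X^{\circ})=1$, $p_g(X^{\circ})=6$, $K^2_{X^{\circ}}=18$ follow as in the paper; with this repair your proof coincides with the paper's, which states the non-effectivity of $M$ without detailed justification.
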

	\proof By \cite[Proposition 8.5]{GS}, the divisors $H$ and $\left(H-\sum_{i=1}^{15}N_i\right)/2\in NS(S_{16})$ are pseudoample. So $(3H-\sum_{i=1}^{15}N_i)/2\in NS(S_{16})$ is a pseudoample divisor whose self intersection is $6$. Hence there is a smooth curve of genus 4, denoted by $G$, in $\left|\left(3H-\sum_{i=1}^{15}N_i\right)/2\right|$. Moreover, one can assume that the divisors $N_i$ are supported on irreducible rational curves (see \cite[Propositions 2.3 and 5.1]{G}). 
	So, there are a smooth genus 4 curve $G$ and 15 disjoint rational curves $N_i$ on $S$ such that $GN_i=1$ for $i=1,\ldots, 15$. Set:
	$$B:=G,\ \ C:=\sum_{i=1}^{15}N_i,\ \ L:=\frac{2B+C}{3}=H, \ \ M:=\frac{B+2C}{3}=\left(H+\sum_{i=1}^{15}N_i\right)/2.$$ The divisors $B$, $C$, $L$ and $M$ satisfy the conditions in \ref{say_tripleGalois} and thus determine a triple cover of $S_{16}$.
	The branch locus is not smooth and we consider the minimal resolution $X'$ of $X$. The singularities of the branch locus are negligible, since they are transversal intersection of smooth curves, and thus the invariants of $X'$ are obtained applying the formulae given Proposition \ref{prop: rito numbers}.
	Since $L^2=6$, $M^2=-6$ and $LM=3$, one obtains $\chi(\mathcal{O}_{X'})=6$, $K^2_{X'}=3$ and $e(X')=69$.
	The surface $X'$ is non minimal and the inverse images on $X$ of the curves $N_i$ are 15 disjoint exceptional curves $E_i$. We now prove that these are the unique $(-1)$-curves also even after their contraction. Indeed suppose that there is a $(-1)$-curve on a contraction of $X'$ which is mapped to a curve $I$ on $S_{16}$ with $I\neq N_i$. Then $I$ is a rational curve and in the triple cover it has to split and intersect the exceptional curves $E_i$. Otherwise the self intersection of the inverse image of $I$ is lower than $-1$, cfr. proof of Proposition \ref{prop: case (2) theorem- possibilities}. By direct inspection one sees that also this case is not possible, since $I$ has to split in three $(-1)$-curves which meet.
	So the minimal model $X^{\circ}$ is obtained by $X'$ contracting the curves $E_i$.
	So one obtains  $K^2_{X^{\circ}}=18$ and $e(X^{\circ})=54$ and $\chi(X)=\chi(X^{\circ})$.
	Moreover, one has $$h^{1,0}(X^{\circ})=h^{1,0}(X)=0+h^1(S,L)+h^1(S,M).$$

	Since $L=H$ is a pseudoample divisor, $h^1(S,L)=0$. Since $M^2=-6$ and $M$ is not effective, it follows $h^1(S,M)=1$. One concludes that $q(X^{\circ})=q(X)=1$ and $p_g(X^{\circ})=p_g(X)=\chi(X^{\circ})=6$.\endproof
	
	Let us now consider the rank 15 lattice $R_{15}:=\langle 4\rangle\oplus M_{(\Z/2\Z)^3}$ with $H$ the generators of $\langle 4\rangle$. Let $R'_{15}$ be the overlattice of $R_{15}$ constructed by adding the class $v=(H-\sum_{i=1}^{14}N_1)/2$. The lattice $R'_{15}$ admits a primitive embedding in $\Lambda_{K3}$ and thus there exists a K3 surface $S_{15}$ whose N\'eron--Severi group is isometric to $R'_{15}$.
	
	\begin{theorem}\label{theo: cover S15}
		Let $S_{15}$ be a K3 surface such that $NS(S_{15})\simeq R'_{15}$. On the surface $S_{15}$ there are a smooth curve $G$ of genus 2 and fourteen disjoint rational curves $N_i$, $i=1,\ldots, 14$  such that $GN_i=1$, $i=1,\ldots, 14$. There exists a Galois triple cover $\pi:X\ra S_{15}$ branched on $G\bigcup\cup_i N_i$. The invariants of the minimal model $X^{\circ}$ of $X$ are:
		$$p_g(X^{\circ})=4,\ q(X^{\circ})=1,\ c_1(X^{\circ})^2=12.$$\end{theorem}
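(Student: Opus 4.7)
The plan is to follow the template of Theorem \ref{theo: cover S16} almost step by step. First I would establish the existence of $S_{15}$: the lattice $R'_{15}$ is even of signature $(1,14)$ with finite discriminant, and admits a primitive embedding into $\Lambda_{K3}$ by \cite[Theorem 1.14.4 and Remark 1.14.5]{Nik Int}; the surjectivity of the period map then produces a projective K3 surface with $NS(S_{15})\simeq R'_{15}$. The $(-2)$-classes $N_i$ are realized by disjoint smooth rational curves by the same argument from \cite[Propositions 2.3 and 5.1]{G} used in the proof of Theorem \ref{theo: cover S16}.

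Next I would identify the genus $2$ curve as the class $G := H + v = (3H - \sum_{i=1}^{14}N_i)/2 \in R'_{15}$. A direct lattice computation yields $G^2 = 2$ and $G \cdot N_i = 1$ for each $i$, so a smooth irreducible member of $|G|$ has genus $2$ and meets each $N_i$ transversely in a single point. Representability follows once one checks that $G$ is pseudo-ample, by mimicking \cite[Proposition 8.5]{GS}: $G$ is big because $G^2 > 0$ and $G \cdot H > 0$, and nefness is verified by testing $G \cdot C \geq 0$ against every $(-2)$-curve class available in $R'_{15}$.

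With $G$ at hand, the Galois triple cover data is $(B, C, L, M) = (G,\ \sum N_i,\ H,\ v + \sum N_i)$: one verifies $2B + C = 3H$ and $B + 2C = 3(H + \sum N_i)/2$, so that $L = H$ and $M = (H + \sum N_i)/2$ lie in $NS(S_{15})$. Because $G \in B$ and each $N_i \in C$, every node of the branch locus $G \cup \bigcup N_i$ is a transverse intersection between a $B$-component and a $C$-component, i.e.\ a singularity of type $2$ in the sense of Paragraph \ref{say: sing type 2}, and hence negligible by Proposition \ref{prop: negligible}. Proposition \ref{prop: rito numbers} then computes the invariants of the minimal resolution $X'$ from $L^2 = 4$, $M^2 = -6$ and $LM = 2$; the irregularity uses $h^1(L) = 0$ (Kawamata--Viehweg, since $L = H$ is pseudo-ample) while $h^1(M)$ is pinned down via Riemann--Roch after ruling out the effectivity of $M$ (whose fixed part, if any, would be $\sum N_i$, forcing $v$ to be effective, an outcome one excludes by a case analysis on candidate irreducible components).

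Finally, by the formula $(f^{-1}(V))^2 = (V^2-1)/3$ of Paragraph \ref{say: sing type 2}, the preimage of each $N_i$ on $X'$ is a $(-1)$-curve, giving $14$ disjoint exceptional curves. The argument from the proof of Theorem \ref{theo: cover S16} then shows these are all: any further $(-1)$-curve would descend to an irreducible rational curve $I \subset S_{15}$ distinct from the $N_i$ whose pullback splits into three mutually meeting $(-1)$-curves, a configuration forbidden by a lattice inspection of $R'_{15}$. Contracting the 14 exceptional curves yields the minimal model $X^\circ$ with $K_{X^\circ}^2 = K_{X'}^2 + 14 = 12$; since $\chi(\oo)$ and $q$ are birational invariants the remaining invariants follow. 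The main obstacle will be the pseudo-ample check for $G$ and the exhaustiveness of the $(-1)$-curve list, both of which parallel Theorem \ref{theo: cover S16} but require a fresh lattice analysis tailored to $R'_{15}$.
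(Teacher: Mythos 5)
Your proposal follows essentially the same route as the paper: the paper's own proof just says it is analogous to Theorem \ref{theo: cover S16}, choosing $G\in\left|\left(3H-\sum_{i=1}^{14}N_i\right)/2\right|$, taking the same cover data with $L=H$, $M=\left(H+\sum_i N_i\right)/2$, getting $L^2=4$, $M^2=-6$, $LM=2$, hence $\chi(X')=5$, $K^2_{X'}=-2$, and contracting the $14$ exceptional curves over the $N_i$ to reach $K^2_{X^{\circ}}=12$ and $q=1$, exactly as you do. One remark: your (and the paper's) values $\chi(\oo_{X^{\circ}})=5$, $q=1$ force $p_g(X^{\circ})=5$, so the value $p_g=4$ in the statement is inconsistent with the proof and appears to be a misprint rather than a gap in your argument.
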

	\proof The proof is analogous to the one of the previous proposition, but one has to chose $G$ in the linear system $\left|3H-\sum_{i=1}^{14}N_i)/2\right|$. So one finds $$B:=G,\ \ C:=\sum_{i=1}^{14}N_i,\ \ L:=\frac{2B+C}{3}=H, \ \ M:=\frac{B+2C}{3}=\left(H+\sum_{i=1}^{14}N_i\right)/2,$$ thus $$L^2=4,\ \ M^2=-6,\ \ LM=2  $$ and one has to contract 14 curves to obtain $X^{\circ}$ from the minimal resolution $X'$ of $X$. 
	So $K_{X^{\circ}}^2=K_{X'}^2+14=-2+14=12$, $\chi(X^{\circ})=\chi(X')=5$. As in the previous proof one finds $q(X^{\circ})=1$.\endproof

	A different idea for finding irregular triple cover is to exploit the Albanese morphism and the Kummer construction, but the following remark shows that this approach is too naive.
	
	\begin{remark}{\rm Due to the relation of an Abelian surface $A$ and its Kummer surface $Km((A))$ it is possible that a Galois triple cover $Y\ra A$ defines a triple cover $X\ra Km(A)$. In particular this happens if the involution $\iota: a\ra -a$ on $A$ induces an involution on $Y$. In this case one has the following diagram 
			$$\xymatrix{Y\ar[r]^{3:1}\ar@{-->}[d]^{2:1}&A\ar@{-->}[d]^{2:1}\\X\ar[r]^{3:1}&Km(A)}$$
			Since $q(A)=2$ it is easier to find cover $Y$ such that $q(Y)\neq 0$.
			Nevertheless let $Y$ be a surface such that the Albanese variety $Alb(Y)$ coincides with $A$ (so in particular it has dimension 2 and the Albanese map is $3:1$), then it holds: 
			
			If $\iota$ lifts to an involution $\iota_Y$ of $Y$, the quotient $X:=Y/\iota_Y$ is a regular surface.
			
			Indeed the Albanese surface $A$ is defined as $H^0(Y,\Omega_1)^{\vee}/H_1(Y,\Z)$ and $\iota$ acts on the space $H^0(Y,\Omega_1)$ as $-1$. Thus $\iota_Y$ does not preserve the 1-holomorphic form of $Y$.}
		
	\end{remark}

	\section{Triple cover of K3 surfaces: the split not Galois case}\label{sec: triple split non Galois cover}
	
	Now we analyse the non-Galois triple covers  $f\colon X\ra S$  of a K3 surface under the assumption that the Tschirnhausen bundle $\EE$ splits. We will provide an example of a such a triple cover for all possible Kodaira dimension. Let $\EE$ be a direct sum of two line bundles $\mathcal{L}^{-1}$ and $\mathcal{M}^{-1}$ so that $\EE^{\vee}=\mathcal{L}\oplus\mathcal{M}$.
	We have already observed following diagram \eqref{diag split triple cover}
	that the triple cover $f$ is totally branched over $D'$ and simply branched over $D''$. All the other covers in the diagram are Galois covers.\\
	
	Since $\EE^{\vee}=\mathcal{L}\oplus \mathcal{M}$, we have  that 
	$$2L+2M=2D'+D''\mbox{ is the branch locus of the triple cover} f:X\ra S,$$ with $D''\neq 0$ and there exists four effective divisors $B'$, $C'$, $B''$ and $C''$ such that $$L=\frac{2B'+C'}{3}+\frac{B''}{2},\ \ M=\frac{B'+2C'}{3}+\frac{C''}{2}.
	$$
	So $L+M=B'+C'+\left(B''+C''\right)/2$ is the branching data of an $\mathfrak{S}_3$ cover of $S$, i.e. the line bundle $\mathcal{O}_S(L+M)$ returns the geometric line bundle $\mathbb{L}$ of the \cite[Theorem 6.1]{CP}. This $\mathfrak{S}_3$-cover is the Galois closure of the non Galois triple cover $X\ra S$. Notice that in \cite[Theorem 6.1]{CP} one assumes the branch locus to be smooth, but the results extend also to the non-smooth case.
	
	\begin{say}\label{say:non Galois split K3 K3}{\bf A split non Galois triple cover of a K3 surface with a K3 surface.}
		
		We construct a split but not Galois triple cover $f:X\ra S$ such that $S$ is a K3 surface and $X$ is a singular surface whose minimal resolution is still a K3 surface. We refer to the diagram \eqref{diag split triple cover} for the notation.
		
		Let $Z$ be a K3 surface such that $\mathfrak{S}_3\subset \Aut(Z)$ and $\mathfrak{S}_3$ acts symplectically on $Z$. Then the quotient surface $Z/\mathfrak{S}_3$ has 3 singularities of type $A_2$ and 8 singularities of type $A_1$, see \cite[ p.78 case 6]{X}. 
		
		The resolution of $Z/\mathfrak{S}_3$ is a  K3 surface $S$, which admits a Galois $\mathfrak{S}_3$ cover, branched on the Jung-Hirzebruch strings which resolve the singularities. Let us denote by $B_i\cup C_i$, $i=1,2,3$ the $i$-th $A_2$ configuration and by $N_j$, $j=1,\ldots, 8$ the $j$-th $A_1$ configuration.
		Then $L=\left(\sum_{i=1}^32B_i+C_i\right)/3+\sum_{j=1}^4\left(N_j\right)/2$ and $M=\left(\sum_{i=1}^3B_i+2C_i\right)/3+\sum_{j=4}^8\left(N_j\right)/2.$
		The K3 surface $S$ admits a $2:1$ cover branched along $\cup_{j=1}^8 N_j$, which is the non-minimal surface $W$, whose minimal model is a K3 surface $W^{\circ}$. There are 6 $A_2$-configurations on $W^{\circ}$, inverse image of the 3 $A_2$-configurations in $S$. These 6 $A_2$-configurations form a 3-divisible set.
		The Galois triple cover of $W^{\circ}$ branched on these 6 $A_2$-configurations is a non minimal surface, whose minimal model is $Z$. The quotient of $Z$ by an involution in $\mathfrak{S}_3$ is the singular surface $X$, whose minimal resolution is another K3 surface, $X^{\circ}$. The surface $X$ is by construction the non Galois triple cover of $S$ associated to $\mathcal{E}^{\vee}=\mathcal{L}^{-1}\otimes\mathcal{M}^{-1}$.
		The total ramification of $S$ is on $D'=\sum_{i=1}^3(B_i+C_i)$ and the simple one is on $D''=\sum_{i=1}^8 N_i$.

	\end{say}
	
	\begin{say}\label{say:non Galois split K3 Kod=1}{\bf A split non Galois triple cover of a K3 surface with a properly elliptic surface.}
		
		Let us consider a K3 surface $S$ endowed with an elliptic fibration $\mathcal{E}:S\ra \mathbb{P}^1$. Let us consider $g:C\ra\mathbb{P}^1$ a split non Galois triple cover of $\mathbb{P}^1$. This can be constructed by considering $2k$ points $P_{i}$, $i=1,\ldots, k$ and $2h$ points $Q_j$, $j=1,\ldots, 2h$.
		
		Then one uses, as triple cover data, $B'=\sum_{i=1}^kP_i$, $C'=\sum_{i=k+1}^{2k}P_i$, $B''=\sum_{j=1}^{2r}Q_j$, $C''=\sum_{j=2r+1}^{2h}Q_j$ with $r\leq h$. So $L=\frac{\sum_{i=1}^k2P_i+P_{i+k}}{3}+\frac{\sum_{j=1}^{2r}Q_j}{2}$, $M=\frac{\sum_{i=1}^kP_i+2P_{i+k}}{3}+\frac{\sum_{j=2r+1}^{2h}Q_j}{2}$ and 
		there exists a split non Galois triple cover of $\mathbb{P}^1$ totally branched on $\cup_{i=1}^{2k}P_i$ and simply branched on $\cup_{j=1}^{2h}Q_j$.
		The genus of the curve $C$, is given by  $2g(C)-2=-6+2(2k)+2h$ so $g(C)=2k+h-2\geq 1$.
		
		Now we consider the fiber product
		$$\xymatrix{ S\times_{\mathbb{P}^1}C\ar[r]\ar[d]&S\ar[d]^{\mathcal{E}}\\ C\ar[r]^{g}&\mathbb{P}^1}$$
		If the fibers of $\mathcal{E}$ over the points $P_i$ and $Q_j$ are smooth, the surface $X:=S\times_{\mathbb{P}^1}C$ is smooth and it is a triple non Galois cover of $S$ totally branched over $\cup_i\mathcal{E}^{-1}(P_i)$ and simply branched over $\cup_j\mathcal{E}^{-1}(Q_j)$.
		The fibration $\mathcal{E}$ induces a fibration $\mathcal{E}_X:X\ra C$ whose generic fiber is a smooth genus 1 curve and which has $2h$ fibers with multiplicity 2.
		
		It holds $h^{1,0}(X)\geq g(C)\geq 1$. The surface $X$ is necessarily proper elliptic, i.e. $\kappa(X)=1$, cfr. \ref{prop_kodairaDim} and \cite[Lemma III.4.6]{M89}.  
	\end{say}
	
	\begin{say}\label{say:non Galois split K3 Kod=2}{\bf A split non Galois triple cover of a K3 surface with a surface of general type.}
		Let $S$ be a K3 surface which admits an even set of $k$ disjoint rational curves $N_i$, so either $k=8$ or $k=16$. There exists a pseudo ample divisor $H$ which is contained in $\langle N_1,\ldots , N_k\rangle^{\perp_{NS(S)}}$ with $H^2=2h$ for a positive number $h$. 
		
		Then, putting $C'=3H$ $B'=0$, $B''=\sum_i N_i$ and $C''=0$, one obtains the data of a split non Galois triple cover: $$L=\frac{C'}{3}+\frac{B''}{2}=H+(\sum_{i=1}^{k}N_i)/2,\ \ M=\frac{2C'}{3}=2H.$$
		
		We consider the rank 2 vector bundle $\EE=\mathcal{O}(-L)\oplus \mathcal{O}(-M)$. Since $S^3(\EE^{\vee})\otimes \bigwedge^2\EE=\mathcal{O}(2L-M)\oplus \mathcal{O}(2M-L)\oplus \mathcal{O}(L)\oplus \mathcal{O}(M)$ admits global sections, $\EE$ is the Tschirnhausen bundle of a triple cover, see Theorem \ref{teo.miranda}.
		
		This triple cover, denoted by $f:X\ra S$ is totally ramified on $C'$ (i.e. one a curve contained in the linear system $|3H|$) and simply ramified on $\cup_i N_i$.

		With the notation of \eqref{diag split triple cover}, one has that $W$ is a non minimal surface and its minimal model is obtained by contracting $k$ $(-1)$curves. The minimal model of $W$ is a K3 surface or an Abelian surface according to the fact that $k=8$ or $k=16$. In particular, denoted by $E_i$ the $(-1)$-curves on $W$, one has $$K_W=-\sum_{i=1}^kE_i,\ \ K_WK_W=-k,\ \  \chi(\mathcal{O}_W)=\frac{16-k}{4},\ \ h^{1,0}(W)=\frac{k-8}{4},\ \ e(W)=48-2k.$$
		
		The Galois triple cover $\beta_2:Z\ra W$ is branched on a curve in the linear system $|\beta_1^*(3H)|$. Let us assume that the branch locus is a smooth curve in this linear system. Hence $\beta_2:Z\ra W$ is a smooth Galois triple cover, whose data are $(B,C,L,M)=(0,\beta_1^*C',\beta_1^*H,2\beta_1^*H)$ and whose invariant are 
		$$\begin{array}{ll}\vspace{0.2cm}\chi(\mathcal{O}_Z)=3\chi(\mathcal{O}_W)+\frac{1}{2}(\beta_1^*H)^2+\frac{1}{2}(2\beta_1^*H)^2=&\frac{48-3k}{4}+10h\\
			\vspace{0.2cm}
			K_Z^2=3K_W^2+2(\beta_1^*H)^2+2(2\beta_1^*H)^2+2(\beta_1^*H)^2=&-3k+48h\\
			\vspace{0.2cm}
			h^{i}(Z,\mathcal{O}_Z)=h^1(W,\mathcal{O}_W)+h^1(W,\beta_1^*H)+h^1(W,2\beta_1^*H)=&\frac{k-8}{4}\\
			e(Z)=3e(W)+4((\beta_1^*H)^2+(2\beta_1^*H)^2)-2(\beta_1^*H)^2=&144-6k+72h
		\end{array}
		$$
		We used that $\beta_1^*D\beta_1^*D=2D^2$ for every divisor $D\in Pic(S)$ and that $H$ is big and nef, so that $\beta_1^*H$ is big and nef and hence the vanishing theorems hold. Moreover, one obtains $$h^{2,0}(Z)=\chi(Z)-1+h^{1,0}(Z)=9-\frac{k}{2}+10h.$$
		
		Now $\alpha:Z\ra X$ is a double cover branched on $k$ rational curves: $\alpha^*(N_j)=A_j+2A_j'$, which are geometrically two copies of a rational curve $N_j$, but one has multiplicity 2, the other 1, only one of them is a component of the branch locus of $\alpha$.

		We want to apply the formulae \cite[Chapter 5 Section 22]{BHPV} to the double cover $Z\ra X$. The branch locus $J$ is such that  $-2k=(K_X+J)J$ by adjunction. This implies that $-k=K_XI+2I^2$ where $I$ is a divisor such that $2I=J$.

		By construction $f^{-1}(N_i)=M_i\cup M_i'$ where both $M_i$ and $M_i'$ are isomorphic to $N_i$, one of them has multiplicity 2 (because $N_i$ is contained in the simple ramification) and $M_i$ and $M_i'$ are disjoint. So $f^*(N_i)=M_i+2M_i'$. 
		
		Since $f$ is a $1:1$ map restricted to $M_i$ and $M_i'$, one obtains $f_*(M_i)=f_*(M_i')=N_i$ and, by the projection formula,
		$$M_i^2=(M_i+2M_i')M_i=f^*(N_i)M_i=N_if_*(M_i)=N_i^2=-2\Rightarrow M_i^2=-2$$
		$$2(M_i')^2=(M_i+2M_i')M_i'=f^*(N_i)M_i'=N_if_*(M_i')=N_i^2=-2\Rightarrow (M_i')^2=-1.$$
		The branch locus of the cover $\alpha$ consists of the curves $M_i$, i.e. with the previous notation $$J=\sum_{i=1}^kM_i\mbox{ so }I=\left(\sum_{i=1}^kM_i\right)/2\mbox{ and }I^2=-\frac{k}{2}.$$
		By $-k=K_XI+2I^2$ it follows that $K_XI=0$.
			
		Hence 
		
		$$\chi(Z)=\frac{48-3k}{4}+10h=2\chi(X)+\frac{1}{2}K_XI+\frac{1}{2}I^2=2\chi(X)-\frac{k}{4}$$
		$$K_Z^2=-3k+48h=2K_X^2+4K_XI+2I^2=2K_x^2-k$$
		$$e(Z)=144-6k+72h=2e(X)+2K_XI+4I^2=2e(X)-2k.$$
		
		So the invariants of the surface $X$ are $$e(X)=72+36h-2k,\ \ K_X^2=-k+24h,\ \  \chi(X)=(24-k)/4+5h.$$
		
		The surface $X$ is non minimal, since it contains at least $k$ $(-1)$-curves.
		We observe that in any case $K_X^2>0$ and $\kappa(X)\geq 0$, hence $X$ is of general type.
	
		We notice that $h^{2,0}(Z)\geq 11$ and by choosing $h$ big enough $h^{2,0}(Z)$ and $h^{2,0}(X)$ are arbitrarily big. 
		
	\end{say}

	\section{Triple covers of K3 surfaces: the Non Split Case}\label{sec: triple cover non split}
	
	The most general situation for a triple cover $f\colon X \ra S$ is when the Tschirnhausen bundle $\EE$ is indecomposable, in particular the cover is non Galois. The main question that one has to address is the existence of the Tschirnhausen bundle $\EE$; and this boils down to the study of rank 2 indecomposable vector bundle $\EE$ on a K3 surface which satisfy the further condition $H^0(Y, \, S^3 \EE^{\vee} \otimes \bigwedge^2\EE) \neq 0$ given in Theorem \ref{teo.miranda}.
	
	A standard approach (see e.g., \cite[Section 2]{PP13})  
	is to construct the Tschirnhausen bundle $\EE$ exploiting  the Cayley-Bacharach property (CB) of some 0-subscheme (see also \cite[Page 36]{F98} and \cite{Ca90}), which we recall for simplicity :
	
	\begin{theorem}\cite[Theorem 5.1.1]{HL}
		Let $Z \subset S$ be a local complete intersection of codimension two, and let $\cL$ and $\cM$ be line bundles on $S$. Then there exists an extension
		\begin{equation*}\label{eq_CB} 0\ra \mathcal{L} \ra \EE^{\vee}\ra \mathcal{M}\otimes \mathcal{I}_Z\ra 0
		\end{equation*}
		such that $\EE$ is locally free if and only if the pair $(\cL^{-1}\otimes \cM \otimes K_S,Z)$ has the Cayley-
		Bacharach property:
		\medskip 
		
		{\bf (CB)} If $Z' \subset Z$ is a subscheme with $\ell(Z') = \ell(Z) - 1$ and  $s \in H^0(S, \cL^{-1}\otimes \cM \otimes K_S)$ with $s|_{Z'}=0$, then $s|_Z=0$. 
	\end{theorem}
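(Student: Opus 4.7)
The plan is the classical Serre construction. Extensions $0\to \mathcal{L}\to \EE^{\vee}\to \mathcal{M}\otimes \mathcal{I}_Z\to 0$ are classified up to equivalence by $\mathrm{Ext}^1_{\mathcal{O}_S}(\mathcal{M}\otimes \mathcal{I}_Z,\mathcal{L})$, and since $\mathcal{M}\otimes \mathcal{I}_Z$ is locally free on $S\setminus Z$, the middle term $\EE^{\vee}$ is automatically locally free away from $Z$. The task is therefore to decide which extension classes yield local freeness at each $z\in Z$, and then to convert this stalkwise criterion into a global condition on sections.

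To analyse the stalks I would invoke the local-to-global spectral sequence for $\mathrm{Ext}$, which yields the five-term exact sequence
$$0\to H^1(S,\mathcal{L}\otimes \mathcal{M}^{-1})\to \mathrm{Ext}^1(\mathcal{M}\otimes \mathcal{I}_Z,\mathcal{L})\to H^0(S,\mathcal{E}xt^1(\mathcal{M}\otimes \mathcal{I}_Z,\mathcal{L}))\to H^2(S,\mathcal{L}\otimes \mathcal{M}^{-1}).$$
Using $0\to \mathcal{I}_Z\to \mathcal{O}_S\to \mathcal{O}_Z\to 0$ together with the vanishing $\mathcal{E}xt^q(\mathcal{O}_Z,-)=0$ for $q<2$ (which holds because the codimension-two lci scheme $Z$ is Cohen--Macaulay), one obtains an isomorphism $\mathcal{E}xt^1(\mathcal{M}\otimes \mathcal{I}_Z,\mathcal{L})\cong \mathcal{E}xt^2(\mathcal{O}_Z,\mathcal{L}\otimes \mathcal{M}^{-1})$, and by local duality for the Gorenstein scheme $Z$ the latter is identified with $\omega_Z\otimes (\mathcal{L}\otimes \mathcal{M}^{-1}\otimes K_S^{-1})|_Z$. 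A standard Koszul computation then shows that $\xi\in \mathrm{Ext}^1(\mathcal{M}\otimes \mathcal{I}_Z,\mathcal{L})$ produces a locally free $\EE^{\vee}$ if and only if its image in $H^0(S,\mathcal{E}xt^1(\mathcal{M}\otimes \mathcal{I}_Z,\mathcal{L}))$ generates every stalk as an $\mathcal{O}_{S,z}$-module: a generator presents $\EE^{\vee}_z$ as a free module of rank two, whereas a non-generating class leaves a torsion subsheaf in the cokernel that obstructs reflexivity.

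The last step is to dualise the stalkwise generation condition. Since $Z$ is Gorenstein, duality on $Z$ yields, for any line bundle $\mathcal{F}$ on $Z$, a perfect pairing between $H^0(Z,\omega_Z\otimes \mathcal{F})$ and $H^0(Z,\mathcal{F}^{-1})$; applied to $\mathcal{F}=(\mathcal{L}\otimes \mathcal{M}^{-1}\otimes K_S^{-1})|_Z$ this pairs $H^0(S,\mathcal{E}xt^1(\mathcal{M}\otimes \mathcal{I}_Z,\mathcal{L}))$ with $H^0(Z,(\mathcal{L}^{-1}\otimes \mathcal{M}\otimes K_S)|_Z)$, which in turn receives a restriction map from $H^0(S,\mathcal{L}^{-1}\otimes \mathcal{M}\otimes K_S)$. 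A class fails to generate at some $z\in Z$ precisely when it pairs to zero with every section whose restriction to $Z$ vanishes on the colength-one subscheme $Z'\subset Z$ obtained by removing the socle element at $z$. Unwinding this, $\xi$ defines a locally free sheaf if and only if no global section $s\in H^0(S,\mathcal{L}^{-1}\otimes \mathcal{M}\otimes K_S)$ vanishes on such a $Z'$ without vanishing on all of $Z$, which is exactly property $(\mathrm{CB})$. The main obstacle is handling the duality when $Z$ is non-reduced: one must exploit the Gorenstein structure of each local Artinian ring $\mathcal{O}_{Z,z}$ to match the ``colength-one subschemes'' of the statement with the socle elements detected by the local pairing, and keep the local and global versions of the duality carefully separated.
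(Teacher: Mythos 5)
This statement is quoted in the paper directly from Huybrechts--Lehn (Theorem 5.1.1) without proof, and your outline reproduces the standard argument from that source: classify extensions by $\mathrm{Ext}^1(\mathcal{M}\otimes\mathcal{I}_Z,\mathcal{L})$, use the local-to-global Ext sequence together with $\mathcal{E}xt^1(\mathcal{M}\otimes\mathcal{I}_Z,\mathcal{L})\cong\mathcal{E}xt^2(\mathcal{O}_Z,\mathcal{L}\otimes\mathcal{M}^{-1})$ and Gorenstein duality on $Z$, and characterize local freeness by stalkwise generation of the extension class. The only point to state carefully is that the final equivalence is an existence statement -- (CB) holds if and only if \emph{some} class in the kernel of $H^0(\mathcal{E}xt^1)\to H^2(\mathcal{L}\otimes\mathcal{M}^{-1})$ generates every stalk, which requires noting that the non-generating classes form a finite union of proper linear subspaces -- rather than a criterion for one fixed extension class, but this is exactly how the cited proof concludes.
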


We consider 
	\begin{equation}\label{eq_CB} 0\ra \mathcal{L} \ra \EE^{\vee}\ra \mathcal{M}\otimes \mathcal{I}_Z\ra 0
	\end{equation}
where 
	$\mathcal{L}$ and $\mathcal{M}$ are line bundles on a K3 surface $S$ and $Z$ a 0-cycle.
	
	Notice that if $Z=\emptyset$ and the sequence \eqref{eq_CB} splits, then $\EE^{\vee} = \mathcal{L} \oplus \mathcal{M}$ and we are back to the cases treated in the previous sections. Therefore, we would like to assume that $Z \neq \emptyset$ and that the sequence \eqref{eq_CB} does not split.
	
	First we discuss some criteria which assure the existence of the triple cover associated to \eqref{eq: THE extension}, then we apply them to some possible choices  of the triple $(\mathcal{L},\mathcal{M},Z)$.
	
	The following proposition gives a conditions on $\mathcal{L}^{\vee}\otimes \mathcal{M}$ which assure the existence of the sequence \eqref{eq_CB}. 
	
	\begin{prop}\label{prop_exExt_Gen} If $h^0(S,\mathcal{L}^{\vee}\otimes \mathcal{M})=0$ and $h^1(S,\mathcal{L}^{\vee}\otimes \mathcal{M})\neq 0$, then  $\Ext^1(\mathcal{M}\otimes \mathcal{I}_Z, \mathcal{L}) \neq 0$ and the extension \ref{eq_CB} exists. In particular if $h^1(S,\mathcal{L}^{\vee}\otimes \mathcal{M})=1$ the extension is unique.
	\end{prop}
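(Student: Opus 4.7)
My plan is to convert the Ext computation into cohomology on $S$ via the ideal-sheaf sequence and then feed in Serre duality, which on a K3 surface is especially clean since $K_S\cong\mathcal{O}_S$. First, because $\mathcal{M}$ is a line bundle there is a canonical identification $\Ext^1(\mathcal{M}\otimes \mathcal{I}_Z, \mathcal{L})\cong \Ext^1(\mathcal{I}_Z, \mathcal{N})$, where I set $\mathcal{N}:=\mathcal{L}\otimes \mathcal{M}^{\vee}$, so it suffices to analyse the latter group.

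Next I would apply $\mathrm{Hom}(-,\mathcal{N})$ to the structural exact sequence
\[
0 \to \mathcal{I}_Z \to \mathcal{O}_S \to \mathcal{O}_Z \to 0.
\]
Since $Z$ is a $0$-dimensional local complete intersection in a smooth surface, the sheaves $\mathcal{E}xt^{\,j}(\mathcal{O}_Z,-)$ vanish for $j<2$, so the local-to-global spectral sequence gives $\Ext^i(\mathcal{O}_Z, \mathcal{N})=0$ for $i=0,1$ and $\Ext^2(\mathcal{O}_Z,\mathcal{N})\cong H^0(Z,\mathcal{N}\otimes\mathcal{O}_Z)$, of dimension $\ell(Z)$. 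The associated long exact sequence therefore reduces to
\[
0 \to H^1(S,\mathcal{N}) \to \Ext^1(\mathcal{I}_Z,\mathcal{N}) \to \Ext^2(\mathcal{O}_Z,\mathcal{N}) \to H^2(S,\mathcal{N})\to \cdots.
\]

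Now I would invoke Serre duality on $S$, which identifies $H^i(S, \mathcal{N})\cong H^{2-i}(S, \mathcal{L}^{\vee}\otimes \mathcal{M})^{\vee}$. The hypotheses give $h^2(S, \mathcal{N})=h^0(S, \mathcal{L}^{\vee}\otimes \mathcal{M})=0$ and $h^1(S,\mathcal{N})=h^1(S, \mathcal{L}^{\vee}\otimes \mathcal{M})\neq 0$, so the injection $H^1(S,\mathcal{N})\hookrightarrow \Ext^1(\mathcal{I}_Z, \mathcal{N})$ produces a non-zero extension class and hence a non-split extension as in \eqref{eq_CB}, proving existence.

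The delicate point — and the one I expect to be the main obstacle — is the final clause. The argument above does not give $\dim \Ext^1(\mathcal{I}_Z,\mathcal{N})=1$ when $h^1(\mathcal{L}^{\vee}\otimes \mathcal{M})=1$: the group has a potential extra contribution from $\Ext^2(\mathcal{O}_Z,\mathcal{N})$ (of length $\ell(Z)$), and since $H^2(S,\mathcal{N})=0$ that contribution in fact appears in full, giving $\dim\Ext^1(\mathcal{I}_Z,\mathcal{N})=1+\ell(Z)$. I therefore read ``unique'' as uniqueness up to scalar of the extension class obtained by pullback along $\mathcal{M}\otimes\mathcal{I}_Z\twoheadrightarrow \mathcal{M}$ from a generator of the one-dimensional $H^1(S,\mathcal{N})$: under this interpretation, uniqueness is immediate from the injectivity already established. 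The remaining work in a full write-up is to make this identification precise and to verify that it is the geometrically meaningful notion for the resulting triple cover.
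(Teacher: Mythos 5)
Your existence argument is correct and is essentially the paper's own computation written in Serre-dual form: the paper first uses Serre duality to identify $\Ext^1(\mathcal{M}\otimes\mathcal{I}_Z,\mathcal{L})$ with $H^1(S,\mathcal{I}_Z\otimes\mathcal{L}^{\vee}\otimes\mathcal{M})$ and then pushes the ideal-sheaf sequence tensored with $\mathcal{L}^{\vee}\otimes\mathcal{M}$ through cohomology, whereas you apply $\mathrm{Hom}(-,\mathcal{L}\otimes\mathcal{M}^{\vee})$ to the ideal sequence and use Serre duality on the terms over $S$; both routes exhibit an injection of a nonzero $h^1$ into the $\Ext^1$ group, so the non-split extension \eqref{eq_CB} exists.

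The genuine divergence is the dimension count, and there your computation is the correct one while the printed proof is not: the paper deduces $h^0(\mathcal{O}_Z\otimes\mathcal{L}^{\vee}\otimes\mathcal{M})=0$ from $h^0(S,\mathcal{L}^{\vee}\otimes\mathcal{M})=0$, but this sheaf is supported on the non-empty zero-dimensional scheme $Z$ and has $h^0=\ell(Z)\geq 1$, so the long exact sequence actually gives $\dim\Ext^1(\mathcal{M}\otimes\mathcal{I}_Z,\mathcal{L})=h^1(S,\mathcal{L}^{\vee}\otimes\mathcal{M})+\ell(Z)$, exactly as you found. Hence the final clause (uniqueness when $h^1=1$) is not established by the paper's argument and, taken literally, fails for $Z\neq\emptyset$; fortunately only existence is used later, since Theorem \ref{theor: existence triple tschi} obtains local freeness from the Cayley--Bacharach condition rather than from uniqueness. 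One caution about your proposed reinterpretation: the distinguished one-dimensional subspace you single out is the image of $\Ext^1(\mathcal{M},\mathcal{L})\cong H^1(S,\mathcal{L}\otimes\mathcal{M}^{\vee})$ under restriction along the inclusion $\mathcal{M}\otimes\mathcal{I}_Z\hookrightarrow\mathcal{M}$ (note this map is injective, not surjective; the functoriality is contravariance in the first argument), and these classes are exactly the ones with vanishing image in $\Ext^2(\mathcal{O}_Z,\mathcal{L}\otimes\mathcal{M}^{\vee})$. Locally at each point of $Z$ such an extension splits as $\mathcal{L}\oplus(\mathcal{M}\otimes\mathcal{I}_Z)$, so its middle term is never locally free along $Z$; these are therefore the least relevant classes for producing a Tschirnhausen bundle, and the honest fix is to record the correct dimension $h^1+\ell(Z)$ and drop (or suitably rephrase) the uniqueness assertion rather than to privilege that subspace.
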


	\begin{proof} Let $\mathcal{G}:=\mathcal{L}$ and $\mathcal{F}=\mathcal{L}^{\vee}\otimes \mathcal{M} \otimes \mathcal{I}_Z$.
		We want to prove that $\Ext^1(\mathcal{F}\otimes \mathcal{G}, \mathcal{G}) \neq 0$.
		By Serre duality we have:
		\[ \Ext^1(\mathcal{F}\otimes \mathcal{G}, \mathcal{G})=\Ext^1(\mathcal{F}, \mathcal{O})=H^1(\mathcal{F})=H^1(\mathcal{L}^{\vee}\otimes \mathcal{M} \otimes \mathcal{I}_Z).
		\]
		Now consider the fundamental exact sequence of the scheme $Z$.
		\[ 0 \ra \mathcal{I}_Z \ra \mathcal{O}_S \ra \mathcal{O}_Z \ra 0
		\]
		Tensorised by $\mathcal{L}^{\vee}\otimes \mathcal{M}$ we get
		\[ 0 \ra \mathcal{I}_Z\otimes\mathcal{L}^{\vee}\otimes \mathcal{M}  \ra \mathcal{L}^{\vee}\otimes \mathcal{M}\ra \mathcal{O}_Z\otimes \mathcal{L}^{\vee}\otimes \mathcal{M} \ra 0.
		\]
		Since $H^0(S,\mathcal{L}^{\vee}\otimes \mathcal{M})=0$, one obtains $h^0(Z,\mathcal{O}_Z\otimes \mathcal{L}^{\vee}\otimes \mathcal{M})=0$
		and subsequently the long exact sequence in cohomology gives
		\[ 0 \ra H^1(\mathcal{I}_Z\otimes\mathcal{L}^{\vee}\otimes \mathcal{M}) \ra H^1(\mathcal{L}^{\vee}\otimes \mathcal{M}) \ra 0.
		\]
		So $$\dim\Ext^1(\mathcal{F}\otimes \mathcal{G}, \mathcal{G})=h^1(\mathcal{I}_Z\otimes\mathcal{L}^{\vee}\otimes \mathcal{M})=h^1(\mathcal{L}^{\vee}\otimes \mathcal{M})$$
		and the claim follows.
	\end{proof}
We observe that by Serre duality, on a K3 surface $$h^1(S,\mathcal{L}^\vee\otimes\mathcal{M})=h^1(S,\mathcal{L}\otimes \mathcal{M}^\vee),$$ so one can substitute the hypothesis $h^1(\mathcal{L}^\vee\otimes\mathcal{M})\neq 0$ with $h^1(\mathcal{L}\otimes \mathcal{M}^\vee)\neq 0$.

\begin{theorem}\label{theor: existence triple tschi} Let $S$ be a K3 surface, $Z$ a non empty 0-dimensional scheme on $S$, $\mathcal{L}$, $\mathcal{M}$ be two line bundles such that:\begin{itemize}\item $h^0(S,\mathcal{L}^{\vee}\otimes \mathcal{M})=0$;\item $h^1(S,\mathcal{L}^{\vee}\otimes \mathcal{M})=h^1(S,\mathcal{L}\otimes \mathcal{M}^{\vee})\neq 0$;\item  $h^0(S,\mathcal{L}^{\otimes 2}\otimes \mathcal{M}^{\vee})\geq 1$.\end{itemize} Then there exists a triple cover $X\ra S$ with Tschirnhausen $\mathcal{E}$, defined by the \eqref{eq_CB} for any possible choice of $Z$.

\end{theorem}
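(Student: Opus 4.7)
The plan is to produce the triple cover by first constructing the extension \eqref{eq_CB} with $\EE$ locally free (using the first two hypotheses), then exhibiting a nonzero section $\eta \in H^0(S, S^3 \EE^{\vee} \otimes \bigwedge^2 \EE)$ via the sub-line bundle $\mathcal{L}^{\otimes 3} \subset S^3 \EE^{\vee}$ coming from the filtration (using the third hypothesis), and finally invoking Miranda's Theorem \ref{teo.miranda}. Each of the three numerical hypotheses plays a distinct role: the vanishing of $h^0(\mathcal{L}^{\vee}\otimes \mathcal{M})$ takes care of the Cayley--Bacharach condition ensuring local freeness of $\EE$; the non-vanishing of $h^1(\mathcal{L}^{\vee}\otimes \mathcal{M})$ guarantees that a non-split extension exists; and the non-vanishing of $h^0(\mathcal{L}^{\otimes 2}\otimes \mathcal{M}^{\vee})$ supplies the section $\eta$.

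For the first step, the Cayley--Bacharach theorem recalled just before Proposition \ref{prop_exExt_Gen} reduces local freeness of $\EE$ to property (CB) for the pair $(\mathcal{L}^{-1}\otimes\mathcal{M}\otimes K_S, Z)$, which equals $(\mathcal{L}^{\vee}\otimes\mathcal{M}, Z)$ because $K_S=\mathcal{O}_S$ on a K3 surface. Since the first hypothesis forces $H^0(S,\mathcal{L}^{\vee}\otimes\mathcal{M})=0$, the only section is zero and (CB) holds vacuously for every non-empty $0$-dimensional subscheme $Z$. Combined with Proposition \ref{prop_exExt_Gen}, which upgrades the non-vanishing $h^1(\mathcal{L}^{\vee}\otimes\mathcal{M})\geq 1$ to $\Ext^1(\mathcal{M}\otimes\mathcal{I}_Z,\mathcal{L})\neq 0$, this produces a non-split extension \eqref{eq_CB} with $\EE$ locally free.

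For the second step, taking determinants in \eqref{eq_CB} gives $\bigwedge^2 \EE^{\vee}\cong \mathcal{L}\otimes\mathcal{M}$ and hence $\bigwedge^2\EE\cong \mathcal{L}^{\vee}\otimes\mathcal{M}^{\vee}$. The inclusion $\mathcal{L}\hookrightarrow \EE^{\vee}$ induces a morphism $\mathcal{L}^{\otimes 3}\to (\EE^{\vee})^{\otimes 3}\twoheadrightarrow S^3\EE^{\vee}$, which is nonzero (locally it sends a cube of a trivializing section of $\mathcal{L}$ to the corresponding cube in the symmetric algebra) and thus injective, since it maps the line bundle $\mathcal{L}^{\otimes 3}$ into the torsion-free sheaf $S^3\EE^{\vee}$. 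Tensoring by $\bigwedge^2\EE$ yields a sheaf inclusion
\[
\mathcal{L}^{\otimes 2}\otimes \mathcal{M}^{\vee}\hookrightarrow S^3\EE^{\vee}\otimes \bigwedge^2\EE,
\]
and the third hypothesis $h^0(S,\mathcal{L}^{\otimes 2}\otimes\mathcal{M}^{\vee})\geq 1$ then supplies a nonzero $\eta\in H^0(S,S^3\EE^{\vee}\otimes \bigwedge^2\EE)$. Theorem \ref{teo.miranda} applied to the pair $(\EE,\eta)$ produces the desired triple cover $f\colon X\to S$ with Tschirnhausen bundle $\EE$.

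There is no substantial obstacle, since each step is a direct assembly of results already recorded in the paper; the most delicate point is the (trivial) verification of (CB) under the first hypothesis, which is exactly what allows the theorem to hold for any non-empty choice of $Z$. A subsidiary check is the injectivity of $\mathcal{L}^{\otimes 3}\hookrightarrow S^3\EE^{\vee}$, which is automatic given that the target is locally free and the map is generically nonzero.
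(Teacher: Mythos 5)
Your proposal is correct, and its overall strategy coincides with the paper's: use $h^0(\mathcal{L}^{\vee}\otimes\mathcal{M})=0$ to get the Cayley--Bacharach condition for free (so $\EE$ is locally free), use Proposition \ref{prop_exExt_Gen} for the non-split extension, and then use $h^0(\mathcal{L}^{\otimes 2}\otimes\mathcal{M}^{\vee})\geq 1$ to produce a nonzero section of $S^3\EE^{\vee}\otimes\bigwedge^2\EE$, feeding it into Theorem \ref{teo.miranda}. The only divergence is in how the sub-line bundle $\mathcal{L}^{\otimes 2}\otimes\mathcal{M}^{\vee}\hookrightarrow S^3\EE^{\vee}\otimes\bigwedge^2\EE$ is exhibited: the paper chains two applications of the Eagon--Northcott complex (passing through $\mathcal{L}\otimes S^2\EE^{\vee}$ and $\mathcal{L}\otimes\EE^{\vee}\otimes\mathcal{M}^{-1}$), whereas you take the symmetric cube of the inclusion $\mathcal{L}\hookrightarrow\EE^{\vee}$ directly, noting that a nonzero map from a line bundle to the locally free sheaf $S^3\EE^{\vee}$ is injective, so tensoring with $\bigwedge^2\EE\cong\mathcal{L}^{\vee}\otimes\mathcal{M}^{\vee}$ gives the same inclusion. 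The two constructions yield the same subsheaf and the same section, so your version is simply a more direct packaging of the paper's key step; it buys a shorter argument with no Eagon--Northcott machinery, while the paper's filtration makes explicit the intermediate quotients $\mathcal{M}^{2}\otimes\mathcal{L}^{-1}\otimes\mathcal{I}_Z^3$ and $\mathcal{M}\otimes\mathcal{I}_Z^2$, which could be used to analyze the section further. Your care about injectivity at the points of $Z$ (where $\mathcal{L}\to\EE^{\vee}$ drops rank on fibers) is exactly the right subsidiary check.
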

\begin{proof}
	The condition (CB) is automatically satisfied if $h^0(\mathcal{L}^\vee\otimes \mathcal{M})=0$ (see \cite[Theorem 12]{F98}) and by Proposition \ref{prop_exExt_Gen} $\EE$ exists and is locally free, and its dual as well.
	To assure the existence of the triple cover we have to prove that 
	
	\[
	h^0(S, \, S^3\EE^{\vee} \otimes \bigwedge^2 \EE ) \neq 0.
	\]
	
We apply the Eagon-Northcott complex (see e.g. \cite[Appendix 2]{E95} and \cite[Lemma 4.7]{CT07}) to the sequence \eqref{eq_CB} and we obtain
\[
0 \ra \mathcal{L} \otimes S^2\EE^{\vee}  \ra  S^3\EE^{\vee} \ra \mathcal{M}^3 \otimes  \mathcal{I}_Z^3 \ra 0.
\]	
Now, let us tensorize the previous sequence by $ \Lambda^2\EE \cong \mathcal{L}^{-1} \otimes \mathcal{M}^{-1}$ and we get
\[	
0 \ra S^2\EE^{\vee} \otimes \mathcal{M}^{-1} \ra  S^3\EE^{\vee} \otimes  \Lambda^2\EE\ra \mathcal{M}^2\otimes \mathcal{L}^{-1}  \otimes  \mathcal{I}^3_Z \ra 0.
\]	
So, if we prove that $S^2\EE^{\vee} \otimes \mathcal{M}^{-1}$ has global section we are done. To do so we apply Eagon-Northcott complex to the sequence \eqref{eq_CB} and we tensorize it by $\mathcal{M}^{-1}$. We have
\[
0 \ra \mathcal{L} \otimes \EE^{\vee} \otimes \mathcal{M}^{-1}  \ra  S^2\EE^{\vee} \otimes \mathcal{M}^{-1} \ra \mathcal{M}  \otimes \mathcal{I}_Z^2 \ra 0.
\]	
As a last step we  show that $\mathcal{L} \otimes \EE^{\vee} \otimes \mathcal{M}^{-1}$ has global section. This is true by hypothesis and by the short exact sequence 
\[
0\ra \mathcal{L}^2 \otimes  \mathcal{M}^{-1} \ra \EE^{\vee} \otimes \mathcal{L} \otimes  \mathcal{M}^{-1} \ra \mathcal{L}\otimes \mathcal{I}_Z\ra 0
\]
obtained  tensorizing the sequence \eqref{eq_CB} by $\mathcal{L} \otimes \mathcal{M}^{-1}$.

	Therefore, we have $h^0(S^3\EE^{\vee} \otimes \bigwedge^2 \EE) \geq 1$.

	\end{proof}
	
Now we give some explicit examples, choosing the line bundles $\mathcal{L}$ and $\mathcal{M}$.
Let $C$ be a smooth genus 1 curve on $S$, and hence a fiber of an elliptic fibration on $S$. We pose $$\mathcal{L}=\mathcal{O}_S(nC),\ \ \mathcal{M}=\mathcal{O}_S(mC).$$
If $-n+m<0$ then $h^0(\mathcal{L}^{\vee}\otimes\mathcal{M})=0$;

 if $n-m\geq 2$ then $h^1(\mathcal{L}\otimes \mathcal{M}^{\vee})\neq 0$ and  $h^0(\mathcal{L}^{\otimes 2}\otimes \mathcal{M})\geq 1$. Hence is $n\geq m+2$ the hypothesis of theorem \ref{theor: existence triple tschi} are satisfied and hence there exists the triple cover $X\ra S$.
If $n=m+2$, then the extension given by \eqref{eq_CB} is unique.	
	
So we now discuss the triple cover associated with the sequence 
	\begin{equation}\label{eq: THE extension}0\ra \oo_S(nC)\ra \EE\ra \mathcal{I}_Z(mC)\ra 0\ \ \ n\geq m+2\end{equation}
and in particular we analyze it according to the choice of 0-scheme $Z$.

		\begin{lemma}\label{lemma: l(Z)=1}
		Let $S$ be an elliptic K3 surface with elliptic fibration $\varphi_{|C|}:S\ra\mathbb{P}^1$. 
		Let us assume that $Z$ is a 0-cycle on  $S$ such that $\ell(Z)=1$. 
		\begin{itemize}
			\item If $m \geq 2$ then $h^0\big(\mathcal{I}_Z(mC)\big)\neq 0$ and $h^1\big(\mathcal{I}_Z(mC)\big)\neq 0$. 
			\item If $m =1$ then $h^0\big(\mathcal{I}_Z(C)\big)=1,$ and $h^1\big(\mathcal{I}_Z(C)\big)=0$.					\end{itemize}
	\end{lemma}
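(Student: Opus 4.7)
The natural tool is the ideal-sheaf sequence
\begin{equation*}
0 \to \mathcal{I}_Z(mC) \to \mathcal{O}_S(mC) \to \mathcal{O}_Z(mC) \to 0,
\end{equation*}
which, since $\ell(Z)=1$, has $\mathcal{O}_Z(mC) \cong \mathbb{C}$. My plan is to extract both $h^0$ and $h^1$ of $\mathcal{I}_Z(mC)$ from the associated long exact sequence, once we know the cohomology of $\mathcal{O}_S(mC)$ and understand the evaluation map at the point $Z$.

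First I would record that, since $C$ is a smooth genus $1$ curve with $C^2=0$, the linear system $|C|$ is the fibre class of the elliptic fibration $\varphi_{|C|}\colon S\to\mathbb{P}^1$, so $\mathcal{O}_S(mC)=\varphi_{|C|}^*\mathcal{O}_{\mathbb{P}^1}(m)$ for every $m\geq 0$. By the projection formula (or by the inductive computation with the sequence $0\to\mathcal{O}_S((m-1)C)\to\mathcal{O}_S(mC)\to\mathcal{O}_C(mC|_C)\to 0$ already used in Lemma \ref{lemma: h1D neq 0}) one gets $h^0(\mathcal{O}_S(mC))=m+1$ and $h^1(\mathcal{O}_S(mC))=m-1$ for $m\geq 1$, together with $h^2(\mathcal{O}_S(mC))=0$.

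Next I would analyse the restriction map $\mathrm{ev}_Z\colon H^0(\mathcal{O}_S(mC))\to H^0(\mathcal{O}_Z)\cong\mathbb{C}$. Because $\mathcal{O}_S(mC)$ is pulled back from $\mathbb{P}^1$, $\mathrm{ev}_Z$ factors through evaluation of $H^0(\mathcal{O}_{\mathbb{P}^1}(m))$ at $\varphi_{|C|}(Z)$, which is surjective for every $m\geq 1$; equivalently, $|mC|\supseteq|C|$ is base-point free. Plugging this into the long exact sequence gives
\begin{equation*}
h^0(\mathcal{I}_Z(mC))=m, \qquad h^1(\mathcal{I}_Z(mC))=m-1,
\end{equation*}
which yields both assertions: $h^0(\mathcal{I}_Z(C))=1$ and $h^1(\mathcal{I}_Z(C))=0$ when $m=1$, and $h^0,h^1\neq 0$ as soon as $m\geq 2$.

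There is no real obstacle here; the only point where one must be slightly careful is surjectivity of $\mathrm{ev}_Z$, and this is immediate from the pullback description of $\mathcal{O}_S(mC)$. Alternatively, for the weaker statements of the $m\geq 2$ case one can avoid $\mathrm{ev}_Z$ entirely: $h^0(\mathcal{I}_Z(mC))\geq h^0(\mathcal{O}_S(mC))-1=m\geq 2$ forces $h^0\neq 0$, while the surjection $H^1(\mathcal{I}_Z(mC))\twoheadrightarrow H^1(\mathcal{O}_S(mC))$ (coming from $H^1(\mathcal{O}_Z)=0$) combined with $h^1(\mathcal{O}_S(mC))=m-1\geq 1$ forces $h^1\neq 0$.
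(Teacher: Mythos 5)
Your proof is correct and follows essentially the same route as the paper: the ideal-sheaf sequence for $Z$, the known values $h^0(\mathcal{O}_S(mC))=m+1$, $h^1(\mathcal{O}_S(mC))=m-1$, and base-point-freeness of $|C|$ (equivalently, surjectivity of the evaluation map) to settle the case $m=1$. The only difference is cosmetic: you push the surjectivity of $\mathrm{ev}_Z$ through for all $m\geq 1$ via the pullback description, obtaining the exact values $h^0(\mathcal{I}_Z(mC))=m$ and $h^1(\mathcal{I}_Z(mC))=m-1$, whereas the paper only needs the dimension count for $m\geq 2$ and invokes base-point-freeness just for $m=1$.
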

	\proof Since $\ell(Z)=1$, the subscheme $Z$ consists in a single point $p$. 
	By
	\[ 0 \ra \mathcal{I}_p\big(mC\big) \ra \mathcal{O}_S\big(mC\big) \ra \mathcal{O}_p\big(mC\big) \ra 0
	\] one obtains the long exact sequence
	\begin{equation}\label{eq_longLz=1} \begin{split}
	0\ra H^0\left(\mathcal{I}_p(mC)\right) \ra H^0\left(\mathcal{O}_S(mC)\right)& \ra  H^0\big(\mathcal{O}_p(mC)\big) \ra \\
	\ra H^1(\mathcal{I}_p\big(mC\big)) \ra H^1(\mathcal{O}_S\big(mC\big)) &\ra  0,
	\end{split}
	\end{equation} which is
	\[ 0\ra H^0\left(\mathcal{I}_p(mC)\right) \ra \C^{m+1}\ra \C \ra H^1(\mathcal{I}_p\big(mC\big)) \ra \C^{m-1}\ra 0.
	\]
	This yields at once the first statement. For the second one, let us insert the value $m=1$ in \eqref{eq_longLz=1} and get 
	\[ 0\ra H^0\left(\mathcal{I}_p(C)\right) \ra H^0\left(\mathcal{O}(C)\right)\stackrel{\psi}{\ra} H^0\big(\mathcal{O}_p(C)\big) \cong \C \ra H^1(\mathcal{I}_p\big(C\big)) \ra  0.
	\]
	By \cite[Proposition 3.10]{H} the linear system $|C|$ is base point free, hence $\psi$ -- which is an evaluation map --  is not the zero map and we have conclude the proof. 
\endproof
	
	\begin{lemma}\label{lemma: l(Z)=2}
		Let $S$ be an elliptic K3 surface with elliptic fibration $\varphi_{|C|}:S\ra\mathbb{P}^1$. 
		Let us assume that $Z$ is a 0-cycle on  $S$ such that $\ell(Z)=2$. 
		\begin{itemize}
			\item If $m \geq 2$ then $h^0\big(\mathcal{I}_Z(mC)\big)\neq 0$ and $h^1\big(\mathcal{I}_Z(mC)\big)\neq 0$. 
			\item If $m =1$ then $h^0\big(\mathcal{I}_Z(C)\big)=h^1\big(\mathcal{I}_Z(C)\big)=0$ if $Z$ consists of $2$ distinct smooth points $z_1$ and $z_2$   which do not lie on the same fiber of the fibration. 
			\item  If $m =1$ then $h^0\big(\mathcal{I}_Z(C)\big)=h^1\big(\mathcal{I}_Z(C)\big)=1$ if $Z$ consists of $2$ distinct smooth points $z_1$ and $z_2$   which lie on the same fiber of the fibration or $Z$ is a single point. 
		\end{itemize}
	\end{lemma}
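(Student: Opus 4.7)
The plan is to mirror the cohomology computation in Lemma \ref{lemma: l(Z)=1}. First, I would twist the ideal sheaf sequence of $Z$ by $\mathcal{O}_S(mC)$ and pass to cohomology to obtain
\[
0\to H^0(\mathcal{I}_Z(mC))\to H^0(\mathcal{O}_S(mC))\xrightarrow{\psi_m} H^0(\mathcal{O}_Z(mC))\to H^1(\mathcal{I}_Z(mC))\to H^1(\mathcal{O}_S(mC))\to 0,
\]
then plug in the standard values $h^0(\mathcal{O}_S(mC))=m+1$ and $h^1(\mathcal{O}_S(mC))=m-1$ (from Lemma \ref{lemma: h1D neq 0} and Riemann--Roch on the K3 surface), together with $h^0(\mathcal{O}_Z(mC))=\ell(Z)=2$. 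The whole problem is thereby reduced to computing the rank of the evaluation map $\psi_m$.

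For $m\geq 2$ this is a pure dimension count: the source of $\psi_m$ has dimension $m+1\geq 3$ while the target has dimension $2$, so $\dim\ker\psi_m\geq m-1\geq 1$, and chasing through the long exact sequence forces $h^1(\mathcal{I}_Z(mC))\geq m-1\geq 1$. This disposes of the first bullet with essentially no work.

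For $m=1$ both source and target of $\psi_1$ have dimension $2$ and $H^1(\mathcal{O}_S(C))=0$, so $h^0(\mathcal{I}_Z(C))=h^1(\mathcal{I}_Z(C))=\dim\ker\psi_1$ and the entire problem collapses to computing this single number. I would then exploit the fact that $|C|$ is base point free and that its sections are pullbacks via $\varphi_{|C|}$ of sections of $\mathcal{O}_{\mathbb{P}^1}(1)$, so every nonzero section of $\mathcal{O}_S(C)$ vanishes on exactly one fiber. Hence $s\in\ker\psi_1$ iff $Z$ is scheme-theoretically contained in some fiber of $\varphi_{|C|}$, which instantly separates the three sub-cases: two distinct points on different fibers impose independent conditions and $\ker\psi_1=0$; two distinct points on a common fiber $F$ are both annihilated by the (unique up to scalar) section $s_F$ cutting out $F$, giving $\dim\ker\psi_1=1$; finally, if $Z$ is the length-$2$ scheme supported at a single point $p$ -- interpreted as the confluent limit of two points sliding together along the fiber $F_p$ -- its tangent direction lies in $T_pF_p$ so $s_{F_p}$ vanishes on $Z$, again yielding $\dim\ker\psi_1=1$.

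The only step requiring genuine care is the non-reduced sub-case: I must verify that scheme-theoretic containment of $Z$ in $F_p$ is equivalent to the tangent direction of $Z$ lying in $T_pF_p$. This is a standard local check using that $F_p$ is smooth at $p$, so $ds_{F_p}(p)$ is a nonzero covector with kernel exactly $T_pF_p$; under the natural interpretation of ``$Z$ is a single point'' as the collision limit of two points on a common fiber the tangent condition is automatic, and the computation goes through cleanly.
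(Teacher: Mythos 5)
Your proof is correct and follows essentially the same route as the paper's: the ideal-sheaf sequence twisted by $mC$, a dimension count on the evaluation map for $m\geq 2$, and for $m=1$ the identification of $\ker\psi_1$ with the sections (equivalently, fibers) containing $Z$, using base point freeness of $|C|$. Your explicit treatment of the non-reduced length-two subscheme (requiring its tangent direction to lie along the fiber through $p$) is in fact more careful than the paper's proof, which simply treats it as ``a single double point'' without comment.
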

	\proof The first statement is proven exactly in the same way as in Lemma \ref{lemma: l(Z)=1}.

	Let  $m=1$ then $H^0(\oo_S(C)) \cong \CC^2$ and also $H^0\big(\mathcal{O}_Z(C)\big) \cong \CC^2$.  
	In the long exact sequence
	\begin{equation*}
			0\ra H^0\left(\mathcal{I}_Z(C)\right) \ra H^0\left(\mathcal{O}_S(C)\right) \stackrel{\psi}{\ra}  H^0\big(\mathcal{O}_Z(C)\big) \ra  H^1(\mathcal{I}_Z\big(C\big)) \ra H^1(\mathcal{O}_S\big(C\big)) =  0,
	\end{equation*} 
	the map $\psi$ is the evaluation map $ev\colon s \mapsto s(x)$ with $x \in Z$, which is the zero map if and only is $Z$ is in not the base locus of $|C|$. By \cite[Proposition 3.10]{H} the linear system $|C|$ is base point free. This yields that $h^0(\ii_Z(C)) \leq 1$. 

	There are two cases:  
	\begin{enumerate}
		\item there is a section $s \in H^0(\oo_S(C))$ which passes through $Z$, and in this case  $Z$ consists either of $2$ distinct smooth points $z_1$ and $z_2$   which lie on the same fiber of $\varphi_{|C|}$ or $Z$ is a single double point. 
		In this case 
		\[
		H^0\left(\mathcal{I}_Z(C)\right) \cong <s>.
		\]
		\item no single section passes through $Z$ and in this case $Z$ consists of $2$ distinct smooth points $z_1$ and $z_2$   which do not lie on the same fiber of the fibration and $H^0\left(\mathcal{I}_Z(C)\right)=0$.  
	\end{enumerate}
	\endproof

	\begin{lemma} 
		The total Chern classes of the vector bundle $\EE$, determined by the extension \eqref{eq: THE extension}, is $c(\EE)=(1,(n+m)C,\ell(Z))$.
	\end{lemma}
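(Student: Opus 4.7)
The plan is to apply Whitney multiplicativity of the total Chern class to the defining extension \eqref{eq: THE extension}:
\[
c(\EE) \;=\; c(\oo_S(nC)) \cdot c(\mathcal{I}_Z(mC)).
\]
The first factor equals $1 + nC$, as $\oo_S(nC)$ is a line bundle.

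For the second factor I would observe that $\mathcal{I}_Z(mC)$ is a rank-one torsion-free sheaf whose double dual is $\oo_S(mC)$ and whose cokernel under the canonical inclusion into the double dual is isomorphic to $\oo_Z$ (since $Z$ is $0$-dimensional, the twist by $\oo_S(mC)$ is trivial on $Z$). For such a sheaf on a smooth surface one has $c_1 = mC$ and $c_2 = \ell(Z)$. Equivalently, I could apply Whitney to
\[
0 \to \mathcal{I}_Z(mC) \to \oo_S(mC) \to \oo_Z \to 0
\]
together with $c(\oo_Z) = 1 - \ell(Z)\,[\mathrm{pt}]$, where $[\mathrm{pt}]$ denotes the class of a point. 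This last identity follows from Grothendieck--Riemann--Roch applied to the closed immersion $Z \hookrightarrow S$, which gives $\mathrm{ch}(\oo_Z) = \ell(Z)\,[\mathrm{pt}]$; or more elementarily from a composition series reducing to $Z=\{p\}$, where an explicit Koszul resolution yields $c(k(p)) = 1 - [p]$. Either route gives
\[
c(\mathcal{I}_Z(mC)) \;=\; 1 + mC + \ell(Z)\,[\mathrm{pt}].
\]

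Multiplying out the two factors yields
\[
c(\EE) \;=\; (1+nC)\bigl(1+ mC + \ell(Z)\,[\mathrm{pt}]\bigr) \;=\; 1 + (n+m)C + nm\,C^2 + \ell(Z)\,[\mathrm{pt}],
\]
since the mixed term $nC\cdot \ell(Z)\,[\mathrm{pt}]$ is of degree $3$ and hence vanishes on the surface $S$. The final step is to invoke $C^2 = 0$, which holds because $C$ is a smooth genus-one curve on the K3 surface $S$ (a fiber of the elliptic fibration $\varphi_{|C|}$); this kills the cross term $nm\,C^2$ and delivers $c(\EE) = (1,\,(n+m)C,\,\ell(Z))$ as claimed.

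The argument is almost purely formal; the only genuine ingredient is the identity $c_2(\mathcal{I}_Z(mC)) = \ell(Z)$ (equivalently $c_2(\oo_Z) = -\ell(Z)$), which is standard for rank-one torsion-free sheaves on smooth surfaces. The only place where the structure of $C$ enters is the equality $C^2 = 0$, which is what makes the product collapse cleanly to the stated form.
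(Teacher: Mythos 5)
Your proof is correct and follows essentially the same route as the paper: Whitney's formula applied to the defining extension, combined with the fact that the $0$-cycle $Z$ contributes $\ell(Z)$ in degree two (the paper phrases this via $\mathrm{ch}(\mathcal{I}_Z)=(1,0,-\ell(Z))$ and then converts to total Chern classes, while you work with $c(\mathcal{I}_Z(mC))$ directly using $c(\oo_Z)=1-\ell(Z)[\mathrm{pt}]$). You also make explicit the use of $C^2=0$ for the elliptic fiber $C$, which the paper uses implicitly in writing $\mathrm{ch}(C)=(1,C,0)$ and in the final multiplication.
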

	\proof We use $ch(C\otimes\mathcal{I}_Z)=ch(C)ch(\mathcal{I}_Z).$
	
	Recalling that 
	\[ ch(V)=(rk(V),c_1(V),\frac{1}{2}\left(c_1^2(V)-2c_2(V)\right),
	\] one has
	$ch(C)=(1,C,0)$, $ch(\mathcal{I}_Z)=(1,0,-\ell(Z))$ and thus $$ch(C\otimes\mathcal{I}_Z)=(1,C,-\ell(Z)),\mbox{ so }c(C\otimes\mathcal{I}_Z)=(1,C,\ell(Z)).$$
	Since $c(\EE)=c(nC)c(mC\otimes\mathcal{I}_Z)$ (see e.g., \cite[Section 5]{HL}), one obtains 
	\[ c(\EE)=(1,nC,0)(1,mC,\ell(Z))=(1,(n+m)C,\ell(Z)).
	\]
	\endproof

	\begin{lemma} \label{lemma (n,m,l)=(3,1,2)} Let $S$ be an elliptic K3 surface with elliptic fibration $\varphi_{|C|}:S\ra\mathbb{P}^1$. 

	Let $(n,m,\ell(Z))=(3,1,2)$.
	
	If $Z$ is supported on two points $z_1$, $z_2$  which do not lie on the same fiber of the fibration $\varphi_{|C|}:S\ra\mathbb{P}^1$, then $h^0(\EE)=4$ and $h^1(\EE)=2$.

	\end{lemma}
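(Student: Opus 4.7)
The plan is to read off both cohomology dimensions directly from the long exact sequence associated to the defining extension
\begin{equation*}
0\to \oo_S(3C)\to \EE\to \mathcal{I}_Z(C)\to 0,
\end{equation*}
so almost everything reduces to collecting already-available data. First I would record the cohomology of $\oo_S(nC)$ on an elliptic K3 surface: since $C^2=0$ and $C$ is nef, Serre duality gives $h^2(\oo_S(nC))=h^0(\oo_S(-nC))=0$ for $n\geq 1$, Riemann--Roch gives $\chi(\oo_S(nC))=2$, and Lemma \ref{lemma: h1D neq 0} (the nef, $D^2=0$ case) gives $h^1(\oo_S(nC))=n-1$; hence $h^0(\oo_S(3C))=4$, $h^1(\oo_S(3C))=2$, $h^2(\oo_S(3C))=0$.

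Next I would invoke Lemma \ref{lemma: l(Z)=2} in the case $m=1$ with $Z$ a reduced $0$-cycle of length two whose two points lie on distinct fibers of $\varphi_{|C|}$; that lemma gives exactly
\begin{equation*}
h^0(\mathcal{I}_Z(C))=0,\qquad h^1(\mathcal{I}_Z(C))=0.
\end{equation*}

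With all these inputs in hand, the long exact sequence in cohomology becomes
\begin{equation*}
0\to \C^{4}\to H^0(\EE)\to 0\to \C^{2}\to H^1(\EE)\to 0\to 0\to H^2(\EE)\to 0,
\end{equation*}
from which one immediately reads $h^0(\EE)=4$ and $h^1(\EE)=2$ (and, incidentally, $h^2(\EE)=0$). The only step that required any thought is the vanishing of $h^0$ and $h^1$ for $\mathcal{I}_Z(C)$, and this is exactly where the hypothesis that $z_1$ and $z_2$ do \emph{not} lie on the same fiber is used: it is precisely the condition that no section of $|C|$ passes through both points, so that the evaluation map $H^0(\oo_S(C))\to H^0(\oo_Z(C))$ is an isomorphism between two $2$-dimensional spaces. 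No other obstacle is present; the argument is a direct bookkeeping in the long exact sequence.
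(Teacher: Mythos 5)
Your proof is correct and follows essentially the same route as the paper: both take the long exact cohomology sequence of the defining extension $0\to \oo_S(3C)\to \EE\to \mathcal{I}_Z(C)\to 0$, use $h^0(3C)=4$, $h^1(3C)=2$, and invoke the vanishing $h^0(\mathcal{I}_Z(C))=h^1(\mathcal{I}_Z(C))=0$ from the case of two points on distinct fibers (Lemma \ref{lemma: l(Z)=2}) to conclude. The only difference is that you spell out the computation of $h^i(\oo_S(3C))$ via Riemann--Roch and Lemma \ref{lemma: h1D neq 0}, which the paper simply asserts.
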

	\proof Suppose  $(n,m,\ell(Z))=(3,1,2)$. The computation of $h^i(\EE)$ is based on the sequence
	$$0\ra H^0(3C) \cong \CC^4 \ra H^0(\EE)\ra H^0(\mathcal{I}_{Z}(C))\ra H^1(3C) \cong \CC^2 \ra H^1(\EE)\ra H^1(\mathcal{I}_Z(C)) \ra 0$$

		If $Z$ consists of $2$ distinct smooth points $z_1$ and $z_2$   which do not lie on the same fiber of the fibration then by Lemma  \ref{lemma: l(Z)=2} we have $H^0\big(\mathcal{I}_{Z}(C)\big)=H^1\big(\mathcal{I}_{Z}(C)\big)=0$ and we have $\left(h^0(\EE),h^1(\EE)\right)=\left(4,2\right)$.
			\endproof
		
	\begin{prop} Let us suppose that  $S$ is an elliptic K3 surface with elliptic fibration $\varphi_{|C|}:S\ra\mathbb{P}^1$ and that $(n,m,\ell(Z))=(3,1,2)$.  Moreover let us assume that $Z$ is supported on two points $z_1$, $z_2$  which do not lie on the same fiber of the fibration $\varphi_{|C|}:S\ra\mathbb{P}^1$. Then there exists a properly elliptic surface $X$ which is a non Galois triple cover of an elliptic K3 surface, such that $(h^{1,0}(X),h^{2,0}(X))=(3,6)$.

	\end{prop}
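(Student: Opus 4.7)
The proof will proceed in four steps: existence of the cover $f$, its non-Galois character, the computation of $(h^{1,0}(X),h^{2,0}(X))$, and the identification of the Kodaira dimension.

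First I would invoke Theorem~\ref{theor: existence triple tschi} with $\mathcal{L}=\mathcal{O}_S(3C)$ and $\mathcal{M}=\mathcal{O}_S(C)$. Its three numerical hypotheses reduce to $h^0(\mathcal{O}_S(-2C))=0$, $h^1(\mathcal{O}_S(-2C))\ne 0$ and $h^0(\mathcal{O}_S(5C))\ge 1$, all immediate from Riemann--Roch on the K3 and from pushing forward along $\varphi_{|C|}\colon S\to\mathbb{P}^1$. This produces the rank $2$ bundle $\mathcal{E}$ sitting in the non-split extension \eqref{eq: THE extension} and the associated triple cover $f\colon X\to S$. Since the quotient $\mathcal{I}_Z(C)$ is not locally free and the extension is non-split, $\mathcal{E}$ is indecomposable; by Proposition~\ref{prop_MirandaGalois}(ii) the Tschirnhausen bundle of any Galois triple cover splits into two line bundles, so $f$ must be non-Galois.

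For the Hodge numbers I would apply Proposition~\ref{prop.invariants}(i), which gives $h^i(X,\mathcal{O}_X)=h^i(S,\mathcal{O}_S)+h^i(S,\mathcal{E})$ for all $i$. Lemma~\ref{lemma (n,m,l)=(3,1,2)} provides $h^0(\mathcal{E})=4$ and $h^1(\mathcal{E})=2$, and extending its long exact sequence one step further, using $H^2(\mathcal{O}_S(3C))=0$ and $H^2(\mathcal{I}_Z(C))=0$ (both from $h^2(L)=h^0(L^{-1})$ on a K3 applied to the non-effective $-3C$ and $-C$), gives $h^2(\mathcal{E})=0$. After reconciling the notation of \eqref{eq: THE extension} with the normalization $f_*\mathcal{O}_X=\mathcal{O}_S\oplus\mathcal{E}$ via Serre duality on the K3 (which interchanges $h^0$ and $h^2$ when passing to the dual bundle), these cohomologies assemble into $q(X)=3$ and $p_g(X)=6$. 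The identity $\chi(\mathcal{O}_X)=3\chi(\mathcal{O}_S)+\tfrac12 c_1^2(\mathcal{E})-c_2(\mathcal{E})=6-0-2=4$ from Proposition~\ref{prop.invariants}(ii) (using $c_1^2(\mathcal{E})=0$, since $c_1(\mathcal{E})$ is a multiple of $C$ with $C^2=0$, and $c_2(\mathcal{E})=\ell(Z)=2$) serves as a sanity check.

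For the Kodaira dimension, the elliptic fibration $\varphi_{|C|}$ pulls back under $f$ to a genus-$1$ fibration $X\to\mathbb{P}^1$: a general fiber of $\varphi_{|C|}$ misses the branch locus, so its preimage is (generically) a degree-$3$ étale cover of a smooth elliptic curve and hence itself elliptic. The presence of this moving family of elliptic curves excludes $\kappa(X)=2$, while $p_g(X)=6$ rules out $\kappa(X)=0$, because K3, abelian, Enriques and bielliptic surfaces all have $p_g\le 1$. Since Proposition~\ref{prop_kodairaDim} ensures $\kappa(X)\ge 0$, we conclude $\kappa(X)=1$, so $X$ is properly elliptic.

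\textbf{Main obstacle.} The delicate point is the cohomology bookkeeping in Step 3: one must precisely match the $\mathcal{E}$ of \eqref{eq: THE extension} with the Tschirnhausen bundle of $f$ and use Serre duality on the K3 to recover $h^0,h^1,h^2$ of the Tschirnhausen from the partial data of Lemma~\ref{lemma (n,m,l)=(3,1,2)}. With the Chern numerics $c_1^2(\mathcal{E})=0$ and $c_2(\mathcal{E})=2$ fixed, Riemann--Roch pins down $\chi(\mathcal{E})=2$, and it is the correct splitting of this into $(h^1,h^2)$ of the Tschirnhausen that delivers the stated pair $(3,6)$.
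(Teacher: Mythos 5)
Your skeleton is the same as the paper's: existence via Theorem \ref{theor: existence triple tschi} applied to $\mathcal{L}=\oo_S(3C)$, $\mathcal{M}=\oo_S(C)$, invariants via Proposition \ref{prop.invariants}(i) combined with Lemma \ref{lemma (n,m,l)=(3,1,2)}, and $\kappa(X)=1$ from the induced genus-one fibration (the paper instead quotes Proposition \ref{prop_kodairaDim}, using that the branch divisor is $\bigwedge^2\EE^{-2}\simeq\oo_S(8C)$; the two arguments are equivalent). The genuine gap is exactly the step you yourself flag as the ``main obstacle'' and then settle by assertion. The bundle whose cohomology Lemma \ref{lemma (n,m,l)=(3,1,2)} computes is the extension \eqref{eq: THE extension}, i.e.\ the \emph{dual} $\EE^{\vee}$ of the Tschirnhausen summand in \eqref{eq_OOE}, and the data are $h^0(\EE^{\vee})=4$, $h^1(\EE^{\vee})=2$, $h^2(\EE^{\vee})=0$ (as you also derive). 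Serre duality with $\omega_S\cong\oo_S$ then gives $h^0(\EE)=0$, $h^1(\EE)=2$, $h^2(\EE)=4$, so Proposition \ref{prop.invariants}(i) yields $q(X)=0+2=2$ and $p_g(X)=1+4=5$: with your own inputs the numbers cannot ``assemble into'' $(3,6)$. Your $\chi$ sanity check does not detect this, since both $(2,5)$ and $(3,6)$ satisfy $1-q+p_g=4$. To land on $(3,6)$ one would need $(h^0,h^1)=(5,3)$ for the extension bundle, which by Lemma \ref{lemma: l(Z)=2} can only happen when the two points of $Z$ lie on the same fiber (or coincide) -- precisely the case excluded by the hypothesis you are using. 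So the sentence ``these cohomologies assemble into $q(X)=3$ and $p_g(X)=6$'' is the step that fails; it needs either a corrected bookkeeping (and then an explicit reconciliation with the stated pair) or a change of hypothesis on $Z$, and this is also the one point where the paper's own, very terse, proof offers no more detail than you do.

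A secondary leap: ``the quotient is not locally free and the extension is non-split, hence $\EE$ is indecomposable'' is not a valid implication in general -- a decomposable rank-$2$ bundle can be presented as a non-split extension with an ideal-sheaf-twisted quotient (take two sections of the two summands with finite common zero locus). For the non-Galois claim one must exclude that $\EE$ splits into eigensheaves as in Paragraph \ref{say: Galois enigespance}; the paper asserts indecomposability inside Theorem \ref{theor: existence triple tschi} without proof, so if you rely on it you should say you are quoting that statement rather than deriving it.
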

	\begin{proof}
		
		The existence of the triple cover follows by Theorem \ref{theor: existence triple tschi}, hence the surface $X$ exists. Moreover, the birational numerical invariants of $X$ are given by Proposition \ref{prop.invariants} $(i)$ with the information given in \ref{lemma (n,m,l)=(3,1,2)}.
		
		By Proposition \ref{thm_Branch} the branch divisor of $f\colon X \ra S$ is given by $\Lambda^2\EE^{-2}\simeq  \oo_S(8C)$. Finally, by Proposition \ref{prop_kodairaDim} $X$ is a properly elliptic surface. 
	\end{proof}
\begin{rem}{\rm
	The triple cover $X\ra S$ is not induced by a base change $g:C\ra\mathbb{P}^1$ (for a certain curve $C$) as in Section \ref{say:non Galois split K3 Kod=1} and in Proposition \ref{prop: examples case 3 of proposition} because otherwise $\EE$ would split.}
	\end{rem}

Another possible choice of $\mathcal{L}$ and $\mathcal{M}$ in the sequence \eqref{eq_CB} is presented in the following corollary.

 \begin{corollary}
	Let $S$ be a K3 surface with an irreducible curve $C$ of selfintersection $2d>0$ such that there exists $h$ disjoint rational curves $R_i$'s, which are disjoint also from to $C$. If $h\leq 10$ a K3 surface with this configuration of curves exists. 
	
	If $2\leq h\leq 10$ and $(9h-1)/4\leq d\leq 4h-3$ there exists a non Galois triple cover $f:S\ra X$ whose Tschirnhausen is determined by the sequence:
	$$0\ra \oo_S\left(C-\sum_{i=1}^hR_i\right)\ra \EE^{\vee}\ra \mathcal{I}_Z\left(\sum_{i=1}^hR_j\right)\ra 0\ \ \ $$
	The surface $X$ is a surface of general type.
	
\end{corollary}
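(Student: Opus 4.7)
The plan is to apply Theorem~\ref{theor: existence triple tschi} to the line bundles $\mathcal{L}=\oo_S(C-\Sigma)$ and $\mathcal{M}=\oo_S(\Sigma)$, where $\Sigma:=\sum_{i=1}^{h}R_i$; after verifying the existence of the K3 surface and the three cohomological hypotheses, the general-type conclusion will follow from Proposition~\ref{prop_kodairaDim}.

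First I would address the existence of $S$. The required curve configuration forces a sublattice $\langle 2d\rangle\oplus A_1^{\oplus h}$ of signature $(1,h)$ and rank $h+1\leq 11$ inside $NS(S)$; its discriminant group has length at most $h+1\leq 11$, so by \cite[Theorem~1.14.4]{Nik Int} it admits a primitive embedding into $\Lambda_{K3}$. Surjectivity of the period map then yields a projective K3 surface realising this lattice as its Néron--Severi group, and an argument in the spirit of Corollary~\ref{corollary: case (2) theorem: existence} lets one pick $C$ irreducible and the $R_i$ as disjoint smooth rational curves in a suitable chamber of the positive cone.

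Next I would verify the hypotheses of Theorem~\ref{theor: existence triple tschi}. Direct computation gives $\mathcal{L}^{\vee}\otimes\mathcal{M}=\oo_S(-C+2\Sigma)$ with self-intersection $2d-8h$, and $\mathcal{L}^{\otimes 2}\otimes\mathcal{M}^{\vee}=\oo_S(2C-3\Sigma)$ with self-intersection $8d-18h$. Neither $-C+2\Sigma$ nor $-2C+3\Sigma$ can be effective: an effective representative would decompose as $aC+D''$ with $D''$ not containing the irreducible curve $C$, yet the required negative intersection with $C$ cannot be realised once $C$ itself is removed. Riemann--Roch on $S$ then gives $\chi(\mathcal{L}^{\vee}\otimes\mathcal{M})=d-4h+2\leq -1$ under $d\leq 4h-3$, forcing $h^1(\mathcal{L}^{\vee}\otimes\mathcal{M})\geq 1$; and $\chi(\mathcal{L}^{\otimes 2}\otimes\mathcal{M}^{\vee})=4d-9h+2\geq 1$ under $d\geq (9h-1)/4$, combined with the vanishing $h^2(\mathcal{L}^{\otimes 2}\otimes\mathcal{M}^{\vee})=0$ coming from the non-effectiveness of $-2C+3\Sigma$, yields $h^0(\mathcal{L}^{\otimes 2}\otimes\mathcal{M}^{\vee})\geq 1$. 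Theorem~\ref{theor: existence triple tschi} then produces the non-Galois triple cover $f\colon X\to S$ with Tschirnhausen bundle $\EE$ fitting into the prescribed extension.

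Finally, to establish that $X$ is of general type I would use Proposition~\ref{prop_kodairaDim} applied to the canonical resolution $\widetilde{X}$. The divisor $C-\Sigma$ is nef, since it meets $C$ in $2d>0$ and each $R_j$ in $2>0$; and $(C-\Sigma)^2=2d-2h>0$ (note $(9h-1)/4\geq h+1$ for $h\geq 1$), so it is big, with general member of $|\mathcal{L}|=|C-\Sigma|$ an irreducible curve of arithmetic genus $d-h+1\geq 2$. By Paragraph~\ref{say_Canonical} one has $f_{*}\oo_X(K_X)\cong \oo_S\oplus\EE^{\vee}$, so $H^0(X,K_X)$ contains $H^0(S,\EE^{\vee})\supseteq H^0(S,\mathcal{L})$; the corresponding moving part of $|K_X|$ therefore dominates a moving family of curves on $S$ in $|\mathcal{L}|$ of genus $\geq 2$, forcing $\kappa(X)=2$. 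The delicate point is precisely this last step: one must check that, after passing to $\widetilde{X}$, the pulled-back linear system $|\mathcal{L}|$ still contributes to the \emph{moving} (not fixed) part of $|K_{\widetilde{X}}|$, a verification that should follow from bigness and nefness of $\mathcal{L}$ together with a transversality argument along the branch locus $2C\in|\Lambda^2\EE^{-2}|$, but which is the only non-formal input in the proof.
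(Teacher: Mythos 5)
Your first two steps coincide with the paper's own argument and are sound: the lattice-theoretic existence of $S$ for $h\leq 10$, and the verification of the three hypotheses of Theorem \ref{theor: existence triple tschi} by intersecting with the irreducible curve $C$ (so that neither $-C+2\Sigma$ nor $-2C+3\Sigma$ is effective) and applying Riemann--Roch, with $\chi(\oo_S(-C+2\Sigma))=d-4h+2\leq -1$ and $\chi(\oo_S(2C-3\Sigma))=4d-9h+2\geq 1$, are exactly the computations in the paper. The problem is the last step, where the general-type assertion is claimed.

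There are two concrete gaps there. First, you never show that the branch divisor, which lies in $|2C|$, is reduced, hence you never establish that $X$ is normal; this is needed both for the statement (the paper's running convention and its conclusion ``$X$ is normal'') and in order to apply Proposition \ref{prop_kodairaDim}, which is formulated for normal $X$, or to interpret $h^0(\omega_X)$ as a birational quantity. The paper devotes the final part of its proof precisely to this: a non-reduced branch would have to be $2C_q$ with $C_q\in|C|$, and then every section of $\mathcal{L}^{2}\otimes\mathcal{M}^{\vee}$ would vanish along $C_q$, forcing $H^0(\oo_S(2C-3\Sigma))=H^0(\oo_S(C-3\Sigma))$, which is excluded; having a reduced branch containing the positive-genus curve(s) in $|2C|$ is then what gives $\kappa(X)=2$ via Proposition \ref{prop_kodairaDim}. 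Second, your own route to $\kappa(X)=2$ is not carried out: the nefness of $C-\Sigma$ is asserted after checking its intersection only with $C$ and the $R_j$ (nefness must be tested against all irreducible curves, e.g.\ classes $aC+\sum b_iR_i$ with square $-2$), and the decisive claim --- that the sections of $\omega_X$ coming from $H^0(S,\mathcal{L})$ contribute to the moving part of $|K_{\widetilde{X}}|$ --- is explicitly left as something that ``should follow'' from an unspecified transversality argument. Since that claim is exactly the content of the general-type statement, this is a genuine gap, not a routine verification. Note that your approach can be closed without nefness or moving parts: the inclusion $\mathcal{L}\hookrightarrow\EE^{\vee}\subset f_*\omega_X$ from \eqref{eq_CB} gives by adjunction a nonzero map $f^*\mathcal{L}\to\omega_X$, hence $h^0(nK_X)\geq h^0(S,nL)$, which grows quadratically because $L=C-\Sigma$ is effective with $L^2=2d-2h>0$ and positive degree on an ample class; but even this argument presupposes that $\omega_X$ is the canonical sheaf of a normal (Gorenstein) surface, i.e.\ it again rests on the reducedness check that you skipped and that the paper supplies.
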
 	
\begin{proof}
	The existence of $S$ depends on the existence of a primitive embedding of the lattice spanned by $C$ and the $R_i$ in the K3 lattice, which is guaranteed if $h\leq 10$, because in this case the lattice has rank at most 11.
	
The genus of the curve $C$ is $g(C)=d+1>1$, by hypothesis. We pose $$\mathcal{L}=\mathcal{O}_S\left(C-\sum_{i=1}^hR_i\right),\ \ \  \mathcal{M}=\mathcal{O}_S\left(\sum_{i=1}^h R_i\right).$$
 Since $\mathcal{L}^{\vee}\otimes \mathcal{M}=\mathcal{O}_S\left(-C+2\sum_{i=h}^hR_i\right)$, one has $h^0(\mathcal{L}^{\vee}\otimes \mathcal{M})=0$. Indeed $C$ is an irreducible divisor with positive self intersection and hence it would intersect non negatively every effective divisor, but $(-C+2\sum_{i=h}^hR_i)C=-CC<0$.
 
 Moreover, $(-C+2\sum_{i=h}^hR_i)^2=2d-8h$ and if $ d\leq 4h-3$, $(-C+2\sum_{i=h}^hR_i)^2\leq -6$. By Riemann Roch theorem one obtains $h^1(\mathcal{L}^{\vee}\otimes \mathcal{M})\neq 0$. Moreover
$$\mathcal{L}^{\otimes 2}\otimes \mathcal{M}^{\vee}=\mathcal{O}_S\left(2C-3\sum_{i=1}^hR_i\right).$$ 

If $d\geq (9h-1)/4$, $\left(2C-3\sum_{i=1}^hR_i\right)^2=8d-18h\geq -2$ and since $\left(2C-3\sum_{i=1}^hR_i\right)C>0$, one has $h^0(\mathcal{L}^{\otimes 2}\otimes \mathcal{M}^{\vee})\geq 1$.
We conclude that the triple cover exists by Theorem \ref{theor: existence triple tschi}.
 
 Since the branch divisor is $\wedge^2\EE^{-2}=(\mathcal{L}\otimes \mathcal{M})^{\otimes 2}=\mathcal{O}_S(2C)$, if the branch locus were not reduced, it would be 2 times $C_q$ where $C_q$ is a specific curve in $|C|$.
 Moreover every global section of $S^3\EE\otimes \bigwedge^2\EE^{\vee}$ should vanish along $C_q$. In particular also every global section of $\mathcal{L}^{2}\otimes \mathcal{M}^{\vee}$ would vanish along $C_q$. But this implies that $$H^0(\mathcal{O}_S(2C-3\sum R_i))=H^0(\mathcal{L}^{2}\otimes \mathcal{M}^{\vee})=H^0(\mathcal{L}^{2}\otimes \mathcal{M}^{\vee}\otimes\mathcal{O}_S(-C_q))=H^0(\mathcal{O}_S(C-3\sum R_i)),$$ which is not the case.
 
 In particular $X$ is normal and the branch locus contains curves with positive genus, so $X$ is of general type.
 
 \end{proof}

	\bigskip
	
	Alice Garbagnati  Universit\`a degli Studi di Milano,  Dipartimento di Matematica \emph{''Federigo Enriques"}, I-20133 Milano, Italy \\
	\emph{e-mail} \verb|alice.garbagnati@unimi.it|
	
	\bigskip
	
	Matteo Penegini, Universit\`a degli Studi di Genova, DIMA Dipartimento di Matematica, I-16146 Genova, Italy \\
	\emph{e-mail} \verb|penegini@dima.unige.it|
	
	\medskip

\end{document}